\documentclass[a4paper,reqno,oneside]{amsart}

\usepackage[
style=numeric,
url = false,
sorting=nty,
maxnames=99,
maxalphanames=5,
sortcites]{biblatex}

\usepackage[british]{babel}
\usepackage{csquotes}  
\numberwithin{equation}{section}
\usepackage{amssymb}
\usepackage[protrusion=true,expansion=true]{microtype}	
\usepackage{amsmath,amsfonts,amsthm} 
\usepackage[pdftex]{graphicx}	
\usepackage{url}
\usepackage{svg}
\usepackage[title,titletoc,page]{appendix} 
\usepackage{listings} 
\usepackage{color}
\usepackage{caption}
\usepackage{subcaption}
\usepackage{algorithm}
\usepackage{dsfont} 
\usepackage{bm}
\usepackage{upgreek}
\usepackage[normalem]{ulem}
\usepackage{enumerate}
\usepackage{mathtools}
\usepackage{lipsum}
\usepackage{adjustbox}
\usepackage{tikz}
\usepackage{circuitikz}
\usepackage{tcolorbox}
\usetikzlibrary{tikzmark,calc}
\usepackage{xargs}      
\usepackage{blkarray}

\usepackage{svg}
\usepackage{nicematrix}

\usepackage[T1]{fontenc} 
\usepackage{lmodern} 
\rmfamily 
\DeclareFontShape{T1}{lmr}{b}{sc}{<->ssub*cmr/bx/sc}{}
\DeclareFontShape{T1}{lmr}{bx}{sc}{<->ssub*cmr/bx/sc}{}

\usepackage[left=2.3cm,right=2.3cm,top=2.8cm,bottom=3.0cm,footskip=1.5cm,headsep=1cm,marginparwidth=1.9cm]{geometry}
\usepackage[%
pdfauthor={Yannick de Bruijn, Erik Orvehed Hiltunen},%
pdftitle={PHOTONIC BANDGAPS III},
pdfsubject={PHOTONIC BANDGAPS III},
pdfkeywords={Subwavelength resonances, evanescent modes, band gap, interface eigenmodes, Bloch theory, complex Brillouin zone, layer potentials}]{hyperref}

\usepackage{orcidlink}
\usepackage{fancyhdr}
\fancypagestyle{plain}{%
\fancyhead[C]{}
\fancyfoot[C]{}

}

\fancypagestyle{nosection}{%
\fancyhead[CE]{}
\fancyhead[CO]{}
\fancyfoot[CE,CO]{\thepage}
}
\numberwithin{equation}{section}		
\numberwithin{figure}{section}			
\numberwithin{table}{section}			

\newcommand{\vect}[1]{\boldsymbol{\mathbf{#1}}}

\DeclareMathOperator{\dd}{d\!}

\DeclareMathOperator{\C}{\mathbb{C}}

\newcommand{\N}{\mathbb{N}}

\newcommand{\R}{\mathbb{R}}

\newcommand{\Z}{\mathbb{Z}}

\renewcommand{\epsilon}{\varepsilon}

\newcommand{\Id}{\mathrm{Id}}

\renewcommand{\tilde}{\widetilde}

\renewcommand{\i}{\mathrm{i}}
\renewcommand{\d}{\,\mathrm{d}}

\newcommand{\iL}{\mathsf{L}}
\newcommand{\iR}{\mathsf{R}}
\newcommand{\iLR}{{\mathsf{L},\mathsf{R}}}

\newcommand{\sddots}{\raisebox{3pt}{\scalebox{.6}{$\ddots$}}}

\usepackage[noabbrev,capitalize]{cleveref}
\crefname{proposition}{Proposition}{Propositions}
\crefname{equation}{}{}

\newtheorem{theorem}{Theorem}[section]
\newtheorem{lemma}[theorem]{Lemma}
\newtheorem{proposition}[theorem]{Proposition}
\newtheorem{corollary}[theorem]{Corollary}

\theoremstyle{definition}
\newtheorem{definition}[theorem]{Definition}
\newtheorem{example}[theorem]{Example}

\crefname{assumption}{Assumption}{Assumptions}
\crefname{definition}{Definition}{Definitions}
\crefname{corollary}{Corollary}{Corollaries}
\crefname{enumi}{item}{items}

\begin{document}
\title[Topologically protected interface modes in multi-band damped lattice models]{Topologically protected interface modes in multi-band damped lattice models}

\author[Y. de Bruijn]{Yannick de Bruijn$^*$ \,\orcidlink{0009-0009-6194-0761}}
\address{\parbox{\linewidth}{Yannick de Bruijn\\
Department of Mathematics, University of Oslo, Moltke Moes vei 35, 0851 Oslo, Norway.
\\
\href{https://orcid.org/0009-0009-6194-0761}{orcid.org/0009-0009-6194-0761}}}
\email{yannicd@math.uio.no}

\author[E. O. Hiltunen]{Erik Orvehed Hiltunen \,\orcidlink{0000-0003-2891-9396}}
\address{\parbox{\linewidth}{Erik Orvehed Hiltunen\\
Department of Mathematics, University of Oslo, Moltke Moes vei 35, 0851 Oslo, Norway. \\
\href{http://orcid.org/0000-0003-2891-9396}{orcid.org/0000-0003-2891-9396}.}}
\email{erikhilt@math.uio.no}

\begin{abstract}
    Tridiagonal $k$-Toeplitz operators provide a natural framework for modelling one-dimensional $k$-periodic lattice systems. A fundamental connection is obtained between Coburn's lemma for tridiagonal $k$-Toeplitz operators and the existence of edge modes. We reveal that topological edge modes are characterised by the eigenvalues of the leading principal submatrix of the symbol function.
    A complete analysis of tridiagonal interface operators satisfying global inversion symmetry is then presented. These results are applied to finite one-dimensional $k$-periodic chains of damped resonators that satisfy both local and global inversion symmetry.
    Additionally, disordered tight-binding interface operators are shown to support a topologically robust zero-energy interface state. Numerical simulations are conducted to illustrate the theoretical findings.
\end{abstract}
\maketitle
\noindent
\textbf{Mathematics Subject Classification (MSC2020): }15A18 
15B05, 
35C20, 
81Q12.  

\vspace{3mm}
\noindent
\textbf{Keywords:} Tridiagonal $k$-Toeplitz operators, interface modes, edge modes, tight-binding Hamiltonian, Su-Schrieffer-Heeger (SSH) model, subwavelength resonators, damped resonator chains, topological protection, disordered resonator chains.

\section{Introduction}

Inspired by the study of damped resonator systems, this article tackles two unresolved problems. First, we provide a characterisation of the open limit of tridiagonal $k$-Toeplitz matrices. The open limit refers to the spectrum of a finite but large Toeplitz matrix whose size tends to infinity. This limit is particularly relevant for physical applications, where it is used to connect open and periodic boundary conditions in resonator chains. Our approach builds on Harold Widom’s results for the open limit of block Toeplitz matrices \cite{WIDOM1974284}. In the specific case of tridiagonal $k$-Toeplitz matrices, Widom’s open limit characterisation can be significantly simplified. 
The second contribution of this paper is a strikingly simple description of edge states in multi band lattice models, given by the eigenvalues of a principal minor of the symbol function. Coburn’s lemma for tridiagonal $k$-Toeplitz matrices \cite{ammari2024spectra,debruijn2025complexbandstructurelocalisation} is used to establish a bulk-boundary correspondence for tridiagonal $k$-Toeplitz operators, which guarantees the existence of edge modes and provides an optimal description of the spectrum of tridiagonal $k$-Toeplitz operators.
Topological protection of edge modes is a sought after property as it reveals that edge modes are highly robust against defects obtained from the bulk properties of the material. We reveal that if the symbol function of a tridiagonal $k$-Toeplitz matrix satisfies local inversion symmetry, the edge modes are topologically protected.

These new results will be used to investigate the spectra and in particular interface modes of tridiagonal $k$-Toeplitz interface matrices satisfying global inversion symmetry around the interface.
We distinguish two separate categories of interface modes. The first class comprises edge-induced interface modes that produce monopole-type interface states. 
The second class consists of eigenvalue-matched interface modes, which generate dipole-type interface states and can be seen as a discrete analogue of a matched impedance at the interface \cite{10.1098/rspa.2023.0533,xiao2014surface}.
Topological robustness \cite{Bal03082022, KaneTopological,fefferman2014topologically,lin2022mathematical,kane2005z}, it is shown, belongs exclusively to edge-induced interface modes, while the existence of eigenvalue-matched modes is dependent on the specific coupling at the interface. This stands in contrast to continuum models, where impedance matched interface modes are known to be topologically protected, whereas edge induced interface modes might not exist in continuum models \cite{10.1098/rspa.2023.0533, 10.1098/rspa.2022.0675, alexopoulos2024topologicalinterfacemodessystems,shapiro2022continuum}.

We illustrate our result in the case of a one-dimensional damped $k$-periodic subwavelength resonator chains obtained by merging two mirror symmetric materials together at an interface, creating a $k$-periodic SSH type structure \cite{PhysRevLett.42.1698, PhysRevB.95.115443, ammari2024exponentiallypreprint,craster2023asymptotic}. Although topological protection is well characterised in undamped systems \cite{Edge_Modes, 10.1098/rspa.2023.0533, ammari2020topologically}, no analogue of a topological quantification, such as the Zak phase \cite{PhysRevLett.62.2747}, appears to be known for damped systems.
This will be addressed in the present work, as we will generalise the topological protection of dimers established in \cite{ammari2020topologically} to $k$-periodic resonator chains, as well as demonstrate the protection of interface modes in damped $k$-periodic resonator arrangements satisfying both local and global inversion symmetry.

We conclude the study of edge induced interface modes by revealing an exceptionally high robustness with respect to disorder of a zero energy interface state. We simulate a one dimensional resonator chain with nearest-neighbour coupling and numerically confirm its robustness against variations in the coupling strengths \cite{SSHInterfaceZeroEnergy}.

The paper is organised as follows. Section \ref{Sec: Toeplitz Theory} reviews key results on the spectra of tridiagonal $k$-Toeplitz operators and derives an exact description of the open-limit spectra of tridiagonal $k$-Toeplitz matrices with complex entries. Furthermore, we examine the presence of edge modes and their protection by local inversion symmetry.
Section \ref{sec: Twofold Toeplitz} builds on this framework to characterise the spectra of interface Toeplitz operators. Necessary and sufficient conditions on the eigenvalues supporting a mid-gap interface mode are obtained. We further show that monopole modes arise from edge-induced interfaces, whereas dipole modes originate from a eigenvalue-matched interface.
In Section \ref{Sec: resonator chains}, these analytical results are applied to one-dimensional damped resonator chains and the differences between discrete and continuum models are elucidated in Section \ref{sec: Continuum Limit}. Section \ref{Sec: Aperiodic Dimers} considers a tight-binding Hamiltonian which exhibits an interface mode, robust under strong disorder. 
Finally, Section \ref{Sec: discussion} outlines potential applications and directions for future work. The code used for the numerical simulations in the figures is provided in Section \ref{Sec: Data availability}.

\section{Spectra of tridiagonal k-Toeplitz matrices with complex entries}\label{Sec: Toeplitz Theory}

We will start by recalling some fundamental results on tridiagonal $k$-Toeplitz operators as well as on the asymptotic spectra of large but finite Toeplitz matrices.
We define a \emph{tridiagonal $k$-Toeplitz operator} by the semi-infinite matrix,
\begin{equation}\label{equ:tridiagonalktoeplitzop1}
    \vect{A} := \left(\begin{smallmatrix}
        a_1 & b_1 &  & & & & &  \\
        c_1 & a_2 & b_2 & &  &\\
         & \sddots &  \sddots & \sddots  & & \\
         & & c_{k-2} & a_{k-1}& b_{k-1} & & \\
        & &  & c_{k-1} & a_k  & b_k&  &\\
        & & &  & c_{k} & a_1  & b_1&  &\\
        & & & &  & \sddots & \sddots&  &\sddots \\
    \end{smallmatrix}\right),
\end{equation}
where $a_i,b_i,c_i\in\C$ for $i\in\N$ and $\vect{A}_{ij}=0$ if $\vert i-j \vert > 1$ for $i,j\in\N$.
Any tridiagonal $k$-Toeplitz operator $\vect{A}$ can be reformulated as a tridiagonal block Toeplitz operator with $k \times k$ blocks following a $1$-periodic repetition, that is,
\begin{equation}\label{equ:tridiagonalktoeplitzop2}
    \vect{A} = \scalebox{0.8}{$
    \begin{pmatrix}
        \vect{A}_0 & \vect{A}_{-1} &  \\
        \vect{A}_{1} & \vect{A}_0 & \ddots  \\
         & \ddots & \ddots   \\
    \end{pmatrix}$}.
\end{equation}
The \emph{symbol} of a tridiagonal block Toeplitz operator \eqref{equ:tridiagonalktoeplitzop2}, and thus also of a tridiagonal $k$-Toeplitz operator \eqref{equ:tridiagonalktoeplitzop1}
is defined by the matrix-valued function
\begin{align}\label{def: symbol of operator}
    f:\mathbb C &\to \mathbb C^{k\times k}\nonumber\\
    z&\mapsto \vect{A}_{-1}z^{-1} + \vect{A}_0 + \vect{A}_1z.
\end{align}
Let $\mathbf{B}_0, \mathbf{B}_1 \in \C^{k-1 \times k-1}$ denote the leading and trailing principal sub-matrices of $\mathbf{A}_0\in \C^{k\times k}$, respectively, given by
\begin{equation}\label{def: B0}
    \mathbf{A_0} = \begin{pmatrix}
            \mathbf{B}_0
        &b_{k-1}\\
        c_{k-1} & a_k
    \end{pmatrix} = \begin{pmatrix}
        a_1 & b_1 \\
        c_1 &
            \mathbf{B}_1
         \\
    \end{pmatrix}.
\end{equation}
In the sequel of this paper, we shall refer to the Toeplitz operator $\vect{A}$, specified in equation \eqref{equ:tridiagonalktoeplitzop2}, by the notation $\vect{A} = \vect{T}(f)$.
We define the set of eigenvalues of the symbol function evaluated on the torus as
\begin{equation}\label{equ:detspectraformula1}
\sigma_{\mathrm{det}}(f) := \bigl\{\lambda\in \mathbb C: \det\bigl(f(z)-\lambda\bigr)=0 \text{ for some }  z\in \mathbb T \bigr\}.
\end{equation}
Following \cite[Theorem 2.4.]{ammari2024spectra}, the essential spectrum of the Toeplitz operator is given by \eqref{equ:detspectraformula1}, that is
\begin{equation}\label{eq: essential spectrum}
    \sigma_{\mathrm{ess}}\big(\mathbf{T}(f)\big) = \sigma_{\mathrm{det}}(f).
\end{equation}
Let us also introduce the region of non-trivial winding,
\begin{equation}\label{equ:windspectraformula1}
    \sigma_{\mathrm{wind}}(f):= \bigl\{\lambda \in \mathbb{C}\setminus \sigma_{\mathrm{det}}(f) :  \operatorname{wind}\bigl(\det\bigl(f(\mathbb T) - \lambda),0\bigr) \neq 0 \bigr\}.
\end{equation}
The spectrum of Toeplitz operators is characterised solely by its symbol function \cite[Theorem 2.7.]{ammari2024spectra}, that is, 
\begin{equation}\label{eq: spectrum Toeplitz operator}
    \sigma_\mathrm{det}(f) \cup \sigma_\mathrm{wind}(f)  \subseteq \sigma\bigl(\mathbf{T}(f)\bigr) \subseteq  \sigma_\mathrm{det}(f)  \cup \sigma_\mathrm{wind}(f)  \cup \sigma \bigl(\mathbf{B}_0\bigr),
\end{equation}
where $\mathbf{B}_0$ is the leading principal submatrix of $\mathbf{A}_0$ as in \eqref{def: B0}. The essential spectrum of the Toeplitz operator can alternatively be presented as follows.
Let $z_1(\lambda)$ and $z_2(\lambda)$ be the roots of the polynomial $z\operatorname{det}\big(f(z)-\lambda\Id\big) = 0$ and let us define the set of roots of equal magnitude as,
\begin{equation}\label{def: Gamma set}
    \Gamma := \big\{\lambda \in \C :|z_1(\lambda)| = |z_2(\lambda)| \big\}.
\end{equation}
The set $\Gamma$ is potentially difficult to compute, since it involves checking the magnitude of the roots $z_i(\lambda)$ for all $\lambda \in \C$. This can be overcome in the tridiagonal case using the following result.
\begin{lemma}\label{lemma: open limit essential spectrum}
    Let $f\in\C^{k\times k}$ be the symbol function of a tridiagonal $k$-Toeplitz operator, then it holds that 
    \begin{equation}\label{eq: Gamma Floquet}
        \Gamma = \bigcup_{\alpha \in [-\pi, \pi]} \sigma\big( f(re^{\i\alpha})\big),\quad \text{for }r = \sqrt{\prod_{i = 1}^k\left|\frac{b_i}{c_i}\right|}.
    \end{equation}   
\end{lemma}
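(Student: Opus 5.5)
The plan is to compute $\det\bigl(f(z)-\lambda\bigr)$ explicitly and read off the product of its two roots in $z$. For a tridiagonal $k$-Toeplitz operator the only entries of $f(z)$ that depend on $z$ are the two corner entries: $\mathbf{A}_{-1}$ contributes $b_k z^{-1}$ in position $(k,1)$ and $\mathbf{A}_1$ contributes $c_k z$ in position $(1,k)$ (or the transposed placement, depending on convention; nothing below depends on which). Expanding the determinant of this periodic tridiagonal matrix in the standard way gives
\begin{equation*}
\det\bigl(f(z)-\lambda\bigr) = p(\lambda) + (-1)^{k+1}\Bigl(\prod_{i=1}^k b_i\Bigr) z^{\pm1} + (-1)^{k+1}\Bigl(\prod_{i=1}^k c_i\Bigr) z^{\mp1},
\end{equation*}
where $p(\lambda)$ is a polynomial in $\lambda$ that does not involve $z$. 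It collects all terms that avoid the two corners. The cross terms come from the single cyclic permutation through both corners, and the product of the two corner entries gives an extra $-b_kc_k$ multiplying a $(k-2)$-minor, which is again $z$-free and is absorbed into $p$. Checking this expansion, including the sign and which product goes with $z$ versus $z^{-1}$, is the one step that needs care. I would do it by cofactor expansion along the first row, or by citing the known formula for periodic Jacobi matrices.

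Multiplying by $z$ turns this into a quadratic $z\det\bigl(f(z)-\lambda\bigr) = \beta z^2 + p(\lambda) z + \gamma$, where $\{\beta,\gamma\} = \{\pm\prod b_i, \pm\prod c_i\}$. By Vieta's formulas, $z_1(\lambda)z_2(\lambda) = \gamma/\beta$, so $|z_1 z_2| = \prod_i |b_i/c_i|$ or its reciprocal. The key point is that this product is independent of $\lambda$. For the orientation I will take $|z_1z_2| = r^2$, consistent with the statement. If the convention gives $r^{-2}$ instead, the same argument applies after replacing $z$ by $1/z$, since the union in \eqref{eq: Gamma Floquet} over all $\alpha$ is unchanged under $\alpha \mapsto -\alpha$; the only thing to confirm is that the stated $r$ matches the paper's definition of $f$.

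Given this, the two inclusions are short. If $|z_1| = |z_2|$, then $|z_1|^2 = r^2$, so $|z_1| = r$ and $z_1 = re^{\i\alpha}$ for some $\alpha\in[-\pi,\pi]$. Since $\det\bigl(f(z_1)-\lambda\bigr) = 0$, this gives $\lambda\in\sigma\bigl(f(re^{\i\alpha})\bigr)$. Conversely, if $\lambda\in\sigma\bigl(f(re^{\i\alpha})\bigr)$, then $re^{\i\alpha}$ is a root, say $z_1$, and $|z_2| = r^2/|z_1| = r$, so $|z_1| = |z_2|$ and $\lambda \in \Gamma$.

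Throughout I assume that all $b_i, c_i \neq 0$, which is implicit in the definition of $r$ and guarantees that the polynomial is genuinely quadratic with two nonzero roots.
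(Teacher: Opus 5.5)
Your argument is correct and is essentially the paper's proof: write $z\det\bigl(f(z)-\lambda\bigr)$ as a quadratic in $z$, use Vieta to get $|z_1z_2|=\prod_i|b_i/c_i|=r^2$ independently of $\lambda$, and conclude that equal-modulus roots lie on the circle of radius $r$. You also spell out the reverse inclusion, which the paper leaves implicit.

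Your hedge on orientation is unnecessary, since your placement ($b_kz^{-1}$ at $(k,1)$, $c_kz$ at $(1,k)$) is the paper's convention and the two $k$-cycles give $z_1z_2=\prod_i b_i/\prod_i c_i$ directly. The fallback would not have worked anyway: substituting $z\mapsto 1/z$ in the $r^{-2}$ case gives the circle of radius $1/r$ for the same $f$, not radius $r$.
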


\begin{proof}
    For tridiagonal $k$-Toeplitz operators, the determinant of the symbol corresponds to \cite[Appendix A]{ammari2024spectra}
    \begin{equation}\label{equ:detexpansion1}
        z\det\bigl(f(z)-\lambda\bigr) = (-1)^{k+1} z^2 \prod_{i = 1}^k c_i + g(\lambda)z + (-1)^{k+1}  \prod_{i = 1}^k b_i.
    \end{equation}
    As we are seeking roots of equal magnitude, it follows by Viète's Theorem that
    \begin{equation}
        |z_1(\lambda)||z_2(\lambda)| = |z_1(\lambda)z_2(\lambda)| = \left|\frac{(-1)^{k+1}  \prod_{i = 1}^k b_i}{(-1)^{k+1}  \prod_{i = 1}^k c_i}\right| =   \prod_{i = 1}^k\left|\frac{b_i}{c_i}\right| =:r^2
    \end{equation}
    from which it readily follows that $|z_1(\lambda)| = |z_2(\lambda)| = r$.
    Crucially, the magnitude of the roots is independent of $\lambda$ and therefore, the roots cluster along a circle of radius $r$. Consequently the set of possible $\lambda$ have to be roots of 
    \begin{equation}\label{eq: set of lambda}
    \operatorname{det}\big(f(r\mathbb{T}) - \lambda\Id\big) = 0
    \end{equation}
from which \eqref{eq: Gamma Floquet} is obtained immediately.
\end{proof}

In the remainder of this work, we will consider the case where $\mathbf{T}$ is symmetric. Note that, since we allow complex entries, we are not assuming $\mathbf{T}(f)$ to be Hermitian. Also note that any tridiagonal matrix can be reduced to a symmetric matrix by a similarity transform (as outlined in \cite[Section 3]{ammari2024spectra}).
In the case where $\mathbf{T}(f)$ is symmetric, the expression for the essential spectrum can be further simplified. Let $b_i = c_i \in \C, ~\forall i\in\{1,\dots,k\}$, then it holds,
\begin{equation}\label{def: A}
    A := (-1)^k\prod_{i = 1}^k b_i =  (-1)^k\prod_{i = 1}^k c_i \in \C.
\end{equation}

\begin{corollary}\label{cor: symmetric Toeplitz spectrum}
    Let $\mathbf{T}(f)$ be a symmetric tridiagonal $k$-Toeplitz operator, then 
    \begin{equation}\label{eq: simplified essential spectrum}
        \sigma_{\mathrm{det}}(f) = \bigl\{ \lambda \in \C:2A\cos(\alpha) + g(\lambda) = 0,~\alpha\in[0, 2\pi] \bigr\}
    \end{equation}
    and the spectrum is given by
    \begin{equation}\label{eq: spectrum complex}
        \sigma_{\mathrm{det}}(f) \subseteq \sigma\bigl(\mathbf{T}(f)\bigr) \subseteq\sigma_{\mathrm{det}}(f) \cup \sigma(\mathbf{B_0}),
    \end{equation}
    where $A$ is given by \eqref{def: A} and where $g(\lambda)$ is a polynomial of degree $k$ given by 
\begin{equation}\label{equ:defiglambda1}
        g(\lambda) = \det(\vect{A}_0-\lambda)  -b_k^2 p(\lambda)  ,
    \end{equation}
    where
    \begin{equation}
    p(\lambda) = \begin{cases}
                    0, & k=1, \\
                    1, & k=2,\\
                    \left\lvert\begin{smallmatrix}
                            a_2 - \lambda & b_2 & 0 & \dots & 0 \\
                            b_3 & a_3 - \lambda & b_3 & \ddots & \vdots \\
                            0 & \ddots & \ddots & \ddots & 0 \\
                            \vdots & \ddots & \ddots & \ddots & b_{k-2} \\
                            0 & \dots & 0 & b_{k-1} & a_{k-1} - \lambda \\
                    \end{smallmatrix}\right\rvert, & k \geq 3.
                \end{cases}
    \end{equation}
\end{corollary}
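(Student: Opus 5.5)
The plan is to specialise the general determinant expansion \eqref{equ:detexpansion1} to the symmetric case $b_i=c_i$, and then show that the winding region $\sigma_{\mathrm{wind}}(f)$ is empty. Both claims then follow from \eqref{eq: essential spectrum} and \eqref{eq: spectrum Toeplitz operator}.

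\textbf{The set $\sigma_{\mathrm{det}}(f)$.} With $b_i=c_i$ the leading and constant coefficients in \eqref{equ:detexpansion1} coincide. Both equal $(-1)^{k+1}\prod_i b_i=-A$, so
\begin{equation*}
\det\bigl(f(z)-\lambda\bigr) = -A\,(z+z^{-1}) + g(\lambda).
\end{equation*}
Up to the sign convention for $A$ and $g$, which I will fix by matching coefficients, on $z=e^{\i\alpha}\in\mathbb T$ this becomes $\pm\bigl(2A\cos\alpha + g(\lambda)\bigr)$. This gives \eqref{eq: simplified essential spectrum}.

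\textbf{Identifying $g(\lambda)$.} I expand $\det(f(z)-\lambda)$ directly. The symbol $f(z)$ equals $\mathbf A_0$ plus the corner entries $b_k z^{-1}$ at position $(1,k)$ and $c_k z$ at position $(k,1)$. Expanding the determinant of this periodic Jacobi matrix in these corner entries gives three kinds of terms:
\begin{itemize}
\item the $z$-independent part $\det(\mathbf A_0-\lambda)$;
\item the product of the two corners, $-b_kc_k\,p(\lambda)$, where $p(\lambda)$ is the determinant of the central $(k-2)\times(k-2)$ block (with the conventions $p=1$ for $k=2$ and no such term for $k=1$);
\item the terms $\pm z^{\pm1}\prod b_i$ coming from the long cycle.
\end{itemize}
Collecting the $z^0$ part yields $g(\lambda)=\det(\mathbf A_0-\lambda)-b_k^2p(\lambda)$, which is a polynomial of degree $k$ in $\lambda$ with leading term $(-\lambda)^k$. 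The main obstacle I expect is bookkeeping rather than anything conceptual: the small cases $k=1,2$ need separate treatment, since for $k=2$ the corners overlap the off-diagonal entries. I would verify those two cases by hand.

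\textbf{Emptiness of $\sigma_{\mathrm{wind}}(f)$.} For $\lambda\notin\sigma_{\mathrm{det}}(f)$, the map $z\mapsto z\det(f(z)-\lambda)$ is a quadratic whose two roots have product $1$. This is Viète's formula, exactly as in the proof of \cref{lemma: open limit essential spectrum}, now with $r=1$. Since $\lambda\notin\sigma_{\mathrm{det}}(f)$, neither root lies on $\mathbb T$, so exactly one root lies inside the unit disc. The winding number of $\det(f(z)-\lambda)=z^{-1}\cdot(\text{quadratic})$ around $0$ is therefore $1-1=0$. Hence $\sigma_{\mathrm{wind}}(f)=\emptyset$, and substituting into \eqref{eq: spectrum Toeplitz operator} gives \eqref{eq: spectrum complex}.

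The degenerate case $A=0$ needs a separate remark. There the symbol is constant in $z$, so the winding number is trivially $0$ (and the operator decouples into finite blocks), and the inclusion \eqref{eq: spectrum complex} still holds.
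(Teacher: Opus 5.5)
Your proof is correct. For \eqref{eq: simplified essential spectrum} it matches the paper: you specialise \eqref{equ:detexpansion1} to $b_i=c_i$ and restrict to $\mathbb T$. The paper simply quotes $g$ from the cited expansion, whereas you rederive it via the corner entries, including the separate cases $k=1,2$.

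The genuine difference is how you show $\sigma_{\mathrm{wind}}(f)=\emptyset$.
\begin{itemize}
\item The paper argues geometrically. For fixed $\lambda$, the loop $\alpha\mapsto\det(f(e^{\i\alpha})-\lambda)$ depends on $\alpha$ only through $\cos\alpha$. It is therefore a segment traversed forth and back, has no interior, and winds zero times around any point off it.
\item You instead use Vi\`ete (the roots have product $1$) together with the argument principle: $+1$ from the root inside the disc and $-1$ from the factor $z^{-1}$.
\end{itemize}
The paper's route is shorter and covers $A=0$ for free, since the segment collapses to the point $g(\lambda)\neq0$. Yours needs $A\neq0$ plus a separate degenerate case. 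In exchange, it exhibits the root splitting $|z_1(\lambda)|<1<|z_2(\lambda)|$ off $\sigma_{\mathrm{det}}(f)$, which is exactly what drives Lemma \ref{lemma: open limit and essential spectrum} and the criterion $|z(\lambda)|>1$ in Theorem \ref{cor: coburn eigenvector}.

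Two minor corrections.
\begin{itemize}
\item The sign is not a convention you can fix, because $A$ and $g$ are pinned down by \eqref{def: A} and \eqref{equ:defiglambda1}. Your expansion gives exactly $\det(f(e^{\i\alpha})-\lambda)=g(\lambda)-2A\cos\alpha$, not $\pm\bigl(2A\cos\alpha+g(\lambda)\bigr)$. The set identity still holds because $\cos$ and $-\cos$ take the same values on $[0,2\pi]$. The paper's \eqref{eq: collapsed symbol} has the same sign slip.
\item When $A=0$ because some $b_j=0$ with $j<k$, it is $\det(f(z)-\lambda)$ rather than $f$ itself that is constant in $z$. That is all you need, so the conclusion is unaffected.
\end{itemize}
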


\begin{proof}
    Since the operator is symmetric, \eqref{equ:detspectraformula1} reduces to,
    \begin{equation}\label{eq: collapsed symbol}
    \det\bigl(f(e^{-\i\alpha})-\lambda \bigr) = A e^{-\i\alpha} + A e^{\i\alpha} + g(\lambda) = 2A \cos(\alpha) + g(\lambda).
\end{equation}
Moreover, from \eqref{eq: collapsed symbol} it is not hard to see that the symbol function evaluated on the torus, i.e. $\alpha \in [0, 2\pi]$ does not have any interior points, so
\begin{equation}
    \sigma_{\mathrm{wind}}(f) = \emptyset.
    \end{equation}
    The inclusion in \eqref{eq: spectrum complex} is a direct consequence of \eqref{eq: spectrum Toeplitz operator} asserted in \cite[Theorem 2.7]{ammari2024spectra}.
\end{proof}

Therefore, in the symmetric case, up to possibly $k-1$ points entailed by $\sigma(\mathbf{B}_0)$, the spectrum of a Toeplitz operator is given by $\sigma_{\mathrm{det}}(f)$, which in turn is given by the roots of a parametrised equation
\begin{equation}
    A \cos(\alpha) + g(\lambda)  = 0, ~\text{for } \alpha \in [0, 2\pi].
\end{equation}
Note that the function $g(\lambda)$ is explicitly given by \eqref{equ:defiglambda1} and is a polynomial of degree $k$, so the set $\sigma_{\mathrm{det}}(f)$ is the set of roots parametrised in $\alpha$ and can therefore easily be computed numerically. From this it also becomes clear that the set $\sigma_{\mathrm{det}}(f)$ is composed of a collection of algebraic curves in the complex plane.
\begin{lemma}\label{lemma: open limit and essential spectrum}
    Let $f\in \C^{k\times k}$ be the symbol function of a symmetric Toeplitz operator $\mathbf{T}(f)$, then it hold that 
    \begin{equation}
        \Gamma = \sigma_{\mathrm{ess}}\big(\mathbf{T}(f)\big).
    \end{equation}
\end{lemma}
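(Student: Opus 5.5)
The plan is to combine Lemma~\ref{lemma: open limit essential spectrum} with the identification $\sigma_{\mathrm{ess}}(\mathbf{T}(f)) = \sigma_{\mathrm{det}}(f)$ from \eqref{eq: essential spectrum}. In the symmetric case we have $b_i = c_i$ for all $i$, so the radius in \eqref{eq: Gamma Floquet} is
\[
r = \sqrt{\prod_{i=1}^k |b_i/c_i|} = 1 .
\]
Lemma~\ref{lemma: open limit essential spectrum} then gives
\[
\Gamma = \bigcup_{\alpha\in[-\pi,\pi]} \sigma\big(f(e^{\i\alpha})\big).
\]
By definition \eqref{equ:detspectraformula1}, this union is precisely the set of $\lambda$ with $\det(f(z)-\lambda)=0$ for some $z\in\mathbb T$, i.e.\ $\sigma_{\mathrm{det}}(f)$.

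It is worth checking the argument inside Lemma~\ref{lemma: open limit essential spectrum} in the symmetric case, since that step is where the content lies. By \eqref{equ:detexpansion1} together with \eqref{def: A}, the polynomial $z\det(f(z)-\lambda)$ becomes $-A z^2 + g(\lambda) z - A$, up to the sign convention of \eqref{eq: collapsed symbol}. Its two roots satisfy $z_1 z_2 = 1$, by Viète with equal leading and constant coefficients.

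For the inclusion $\Gamma\subseteq\sigma_{\mathrm{det}}(f)$, take $|z_1|=|z_2|$. Since $|z_1||z_2|=1$, this forces $|z_1|=|z_2|=1$, so some root lies on $\mathbb T$ and $\lambda\in\sigma_{\mathrm{det}}(f)$. For the converse inclusion, take $\lambda\in\sigma_{\mathrm{det}}(f)$, so some root has $|z_1|=1$. Then $z_2 = 1/z_1$ also has modulus one, hence $\lambda\in\Gamma$. Equivalently, in the notation of Corollary~\ref{cor: symmetric Toeplitz spectrum}, both sets coincide with the solution set of $2A\cos\alpha + g(\lambda)=0$. Combining either description with \eqref{eq: essential spectrum} yields $\Gamma=\sigma_{\mathrm{ess}}(\mathbf{T}(f))$.

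The main obstacle is the degenerate case $A=0$, i.e.\ some $b_i=0$. There the quadratic degenerates, $r$ is of the form $0/0$, and $z_1,z_2$ are not both defined. I would first handle this by assuming all $b_i\neq0$, which is implicit in the definitions of $r$ and of $\Gamma$ and therefore in the setting of Lemma~\ref{lemma: open limit essential spectrum}. If needed, I would add a remark that for $A=0$ the operator decouples into finite blocks and both sides reduce to finitely many points given by $g(\lambda)=0$. A minor check is that $\lambda$ with a double root ($z_1=z_2=\pm1$) is included on both sides, which holds trivially.
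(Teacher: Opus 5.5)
Your proposal is correct and follows the paper's proof. Since $b_i=c_i$, you set $r=1$ in Lemma~\ref{lemma: open limit essential spectrum}, recognise $\bigcup_{\alpha}\sigma\big(f(e^{\i\alpha})\big)$ as $\sigma_{\mathrm{det}}(f)$, and conclude with \eqref{eq: essential spectrum}. Your explicit two-inclusion check via $z_1z_2=1$ and your caveat about the degenerate case $A=0$ (where $r$ and $\Gamma$ are not well defined) make precise what the paper leaves implicit, but they do not change the argument.
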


\begin{proof}
    Since $\mathbf{T}(f)$ is symmetric, it follows from \eqref{eq: Gamma Floquet} that $r = 1$, in which case 
    \begin{equation}
        \Gamma = \bigcup_{\alpha \in [-\pi, \pi]} \sigma\big( f(e^{\i\alpha})\big) =  \bigl\{\lambda\in \mathbb C:\det\bigl(f(z)-\lambda\bigr)=0,\ \exists  z\in \mathbb T \bigr\} = \sigma_{\mathrm{ess}}\big(\mathbf{T}(f)\big)
    \end{equation}
    where in the last equality, we used \eqref{eq: essential spectrum}, which completes the proof of the result.
\end{proof}

We now turn to the point spectrum of $\mathbf{T}(f)$ comprising up to $k-1$ points entailed by $\sigma(\mathbf{B}_0)$.

\begin{definition}[Edge states]\label{def: edge state}
We say that $\mathbf{v} \in \ell^2(\N)\setminus\{\mathbf{0}\}$ is an \emph{edge mode} of $\mathbf{T}(f)$ with an \emph{edge eigenvalue} $\lambda\in \C \setminus\sigma_{\mathrm{ess}}\big(\mathbf{T}(f)\big)$ if $\mathbf{T}(f)\mathbf{v} = \lambda \mathbf{v}$, where $\mathbf{v}$ decays exponentially away from the index $0$. We denote by $\sigma_{\mathrm{edge}}\big(\mathbf{T}(f)\big)\subseteq\sigma(\mathbf{B_0})$, the set of edge eigenvalues of $\mathbf{T}(f)$.
\end{definition}
To analyse the occurrence of edge states, it is essential to first consider the following result from \cite[Theorem 2.3]{ammari2024spectra}, which extends Coburn's lemma to the tridiagonal $k$-Toeplitz setting.

\begin{theorem}[Coburn's lemma, tridiagonal k-Toeplitz version]\label{thm: coburns lemma tridiagonal}
   Let $f \in \C^{k\times k}(\mathbb{T})$ be the symbol of a tridiagonal $k$-Toeplitz operator such that $\det\bigl(f(z)\bigr)$ does not vanish identically on $\mathbb{T}$. Then one of the following statements holds:
   \begin{enumerate}[(i)]
       \item $\mathbf{T}(f)$ has a trivial kernel.
       \item $\mathbf{T}(f)$ has a dense range.
       \item The leading $(k-1)\times(k-1)$ principal submatrix of $\mathbf{A}_0$, that is $\mathbf{B}_0$, has a non-zero kernel. In particular, there exists some $z_0 \in \C\cup\{\infty\}$ such that $\ker(z_0^{-1}\mathbf{A}_{-1}+\mathbf{A}_0)\cap\ker(\mathbf{A}_1)\neq \vect 0$.
   \end{enumerate}
\end{theorem}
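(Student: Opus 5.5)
Suppose (i) fails. Then there is a nonzero $\mathbf{v}\in\ell^2(\N)$ with $\mathbf{T}(f)\mathbf{v}=0$. Suppose (ii) also fails. Then the range of $\mathbf{T}(f)$ is not dense, so $\mathbf{T}(f)^{*}=\mathbf{T}(\tilde f)$ has a nonzero kernel vector $\mathbf{w}$, where $\tilde f(z)=f(1/\bar z)^{*}$ is again the symbol of a tridiagonal $k$-Toeplitz operator. The plan is to show that these two vectors can only coexist if (iii) holds.

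First I will analyse the kernel through transfer matrices. Write $\mathbf{v}=(v_1,v_2,\dots)$ and group it into blocks $\mathbf{v}^{(m)}\in\C^k$. The first $k-1$ rows of $\mathbf{T}(f)\mathbf{v}=0$ involve only $v_1,\dots,v_k$ through $\mathbf{B}_0$ and the entry $b_{k-1}$. Each later row is a three-term recurrence $c_{j-1}v_{j-1}+a_jv_j+b_jv_{j+1}=0$.

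If all $b_i,c_i\neq0$, the recurrence determines $\mathbf{v}$ uniquely from $v_1$. So the kernel has dimension at most one. The full-period transfer matrix $M(\lambda)$ is $2\times2$, with $\det M=\prod c_i/\prod b_i$ and trace related to $g$ through \eqref{equ:detexpansion1}. Its eigenvalues are exactly the roots $z_1,z_2$ of $z\det f(z)=0$ (or their reciprocals, depending on convention). A kernel vector in $\ell^2$ therefore requires the initial vector $(v_0,v_1)=(0,v_1)$ to lie in the eigenspace of the root of modulus less than $1$.

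The same analysis applies to $\mathbf{T}(\tilde f)$. There the relevant transfer matrix is essentially the inverse transpose, up to conjugation, so its decaying direction corresponds to the other root. Since $\det f$ does not vanish identically on $\mathbb T$, the roots are not both of modulus $1$. Generically exactly one of $|z_1|,|z_2|$ is below $1$, or both lie on one side of $1$.

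Next I compare the two conditions. Both $\mathbf{v}$ and $\mathbf{w}$ start from the "Dirichlet" initial vector $(0,1)$. So both can decay only if $(0,1)$ is an eigenvector of $M$ for the small root, and simultaneously the dual boundary condition holds for the other eigenvector. For a $2\times2$ matrix $M$ this forces $(0,1)$ to be a common eigenvector in the dual pairing. Equivalently, the $(1,2)$ entry of the appropriate transfer matrix vanishes.

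That entry is, up to a nonzero factor, $\det(\mathbf{B}_0-\lambda)$ at $\lambda=0$, i.e. $\det\mathbf{B}_0$, computed by the usual continuant expansion. Hence $\mathbf{B}_0$ has a nonzero kernel. For the "in particular" clause, I take $u\in\ker\mathbf{B}_0$, extend it by a zero last entry, and identify $z_0$ with the corresponding root. Then the vector $(u,0)$ satisfies $(z_0^{-1}\mathbf{A}_{-1}+\mathbf{A}_0)(u,0)^{T}=0$ and $\mathbf{A}_1(u,0)^{T}=0$, since $\mathbf{A}_1$ only has the entry $c_k$ in its top-right corner. The value $z_0=\infty$ covers the case where the last-row coupling already vanishes.

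The main obstacle is the degenerate case where some $b_i$ or $c_i$ vanish. Then the recurrence is not invertible, the kernel can be larger, and the transfer-matrix argument breaks down. I would handle it separately. If some $c_i=0$ or $b_i=0$, the operator becomes block upper or lower triangular with finite blocks. One then checks directly that a kernel vector supported before the first zero coupling yields a kernel vector of $\mathbf{B}_0$ with $z_0=\infty$. Otherwise the block decomposition reduces the problem to the classical scalar Coburn lemma for the remaining Toeplitz part. The non-vanishing hypothesis on $\det f$ rules out the remaining pathological configurations.
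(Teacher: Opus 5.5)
The paper does not prove this statement; it imports it from \cite[Theorem 2.3]{ammari2024spectra}, so your argument has to stand on its own.

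\textbf{Non-degenerate case.} When all $b_i,c_i\neq 0$ your argument works in outline. $\ker\mathbf T(f)$ lies in the span of the solution with $v_0=0$, $v_1=1$. The adjoint recurrence has multipliers $1/\bar\mu_i$. A nonzero $\mathbf v$ forces some $|\mu_1|<1$, and a nonzero $\mathbf w$ forces some $|\mu_2|>1$. Only then does decay of $\mathbf v$ force $(0,v_1)$ to be a $\mu_1$-eigenvector of $M$, i.e.\ $M_{12}=0$, i.e.\ $v_k=(-1)^{k-1}\det\mathbf B_0/(b_1\cdots b_{k-1})=0$.

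Two corrections are needed here. First, say explicitly that the adjoint hypothesis is used only to exclude the case where both multipliers lie in the open unit disc; the ``dual pairing'' step adds nothing. Second, the claim that $\det f\not\equiv 0$ prevents both roots from lying on $\mathbb T$ is false (take $k=1$, $f(z)=z+z^{-1}$). It is harmless only because unimodular multipliers give no $\ell^2$ solutions.

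\textbf{The gap: vanishing couplings.} The statement allows arbitrary complex entries, and your treatment of zero couplings does not work. Note that when all couplings are nonzero, the hypothesis $\det f\not\equiv0$ is automatic: by \eqref{equ:detexpansion1}, $z\det f(z)$ has constant term $(-1)^{k+1}\prod_i b_i\neq 0$. So the hypothesis exists precisely for the configurations you defer.
\begin{itemize}
\item If some $b_i=0$, the forward recurrence, the transfer matrix $M$ and the dimension bound are unavailable.
\item If some $c_i=0$, the adjoint recurrence cannot be propagated, so the comparison of decaying directions breaks down.
\item Your proposed reduction is wrong. If $b_1=0$, $\mathbf T(f)$ is indeed block lower triangular. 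But after the first $1\times1$ block the tail is again a $k$-periodic block-bidiagonal Toeplitz operator with $k\times k$ blocks, and the scalar Coburn lemma does not apply to it.
\end{itemize}
These cases are not vacuous. Take $k=3$, $a_1=b_1=0$, $c_1=a_2=1$, $b_3c_3\neq0$ and $|c_2|<|b_3|$. Then $\det f(z)=c_3(c_2z-b_3)\not\equiv 0$. The vector $(1,-1,0,\mu,-\mu,0,\mu^2,\dots)$ with $\mu=c_2/b_3$ lies in $\ker\mathbf T(f)$. Also $e_1\in\ker\mathbf T(f)^*$, since the first row of $\mathbf T(f)$ vanishes. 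The kernel vector is not supported before the zero coupling, and your remaining branch rests on the invalid reduction above.

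\textbf{A uniform repair.} The classical Coburn pairing argument works for arbitrary couplings, using that $\mathbf A_{-1}=b_ke_ke_1^\top$ and $\mathbf A_1=c_ke_1e_k^\top$ have rank one.
\begin{itemize}
\item With $\hat x(z)=\sum_{j\geq 0}x^{(j)}z^j$, one has $\mathbf T(f)x=0$ iff $f\hat x=z^{-1}b_kx^{(0)}_1e_k$ on $\mathbb T$. Since $\det f\neq 0$ a.e., $x\neq 0$ forces $b_kx^{(0)}_1\neq0$.
\item Likewise, with $f^*(z):=f(z)^*$ on $\mathbb T$, $\mathbf T(f)^*y=0$ iff $f^*\hat y=z^{-1}\overline{c_k}\,y^{(0)}_1e_k$, and $y\neq0$ forces $c_ky^{(0)}_1\neq 0$.
\end{itemize}
Computing $\hat y^*f\hat x$ in two ways gives
\begin{equation*}
z^{-1}b_kx^{(0)}_1\,\overline{\hat y_k(z)}=z\,c_k\,\overline{y^{(0)}_1}\,\hat x_k(z),\qquad z\in\mathbb T .
\end{equation*}
The left side has Fourier support in $(-\infty,-1]$ and the right side in $[1,\infty)$, so both vanish. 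Hence $\hat x_k\equiv 0$, in particular $x^{(0)}_k=0$. The first $k-1$ rows of $\mathbf T(f)x=0$ then read $\mathbf B_0(x^{(0)}_1,\dots,x^{(0)}_{k-1})^\top=0$ with $x^{(0)}_1\neq 0$, so $\mathbf B_0$ is singular.

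Because $b_kx^{(0)}_1\neq 0$, your choice of $z_0$ from the last row (or $z_0=\infty$) gives the ``in particular'' clause. Take $u=x^{(0)}$ there, not an arbitrary element of $\ker\mathbf B_0$, whose first entry may vanish when couplings degenerate.
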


Coburn's lemma for tridiagonal $k$-Toeplitz operators yields a characterisation on the existence of edge states can therefore be seen as the analogue of the bulk-boundary correspondence \cite{10.1098/rspa.2022.0675} for tridiagonal $k$-Toeplitz operators.

\begin{theorem}\label{cor: coburn eigenvector}
    Let $f\in \C^{k\times k}$ be the symbol function of a tridiagonal $k$-Toeplitz operator, then
    \begin{enumerate}[(i)]
        \item If $\lambda \in \sigma(\mathbf{B}_0) \setminus \Gamma$, then there exists a $z(\lambda) \in \C$ and a vector $\mathbf{v}\in\C^k$ such that
        \begin{equation}\label{eq: B0 Bloch eigenvector}
            \Big[f\big(z(\lambda)\big) -\lambda \Id\Big] \mathbf{v} = 0,\quad \text{with }\mathbf{v}_k =0.
        \end{equation}
        \item If $\lambda \in \sigma(\mathbf{B}_0) \setminus \Gamma$ and $z(\lambda)$ from \eqref{eq: B0 Bloch eigenvector} satisfies $|z(\lambda)|>1$, then  $\lambda\in \sigma_{\mathrm{edge}}\big(\mathbf{T}(f)\big) \subseteq \sigma\big(\mathbf{T}(f)\big)$.
        \item The spectrum of $\mathbf{T}(f)$ is given by,
        \begin{equation}\label{eq: exact Toeplitz spectrum}
            \sigma\big(\mathbf{T}(f)\big) = \sigma_{\mathrm{det}}(f) \cup \sigma_{\mathrm{edge}}\big(\mathbf{T}(f)\big).
        \end{equation}
    \end{enumerate}
\end{theorem}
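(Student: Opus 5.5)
The plan is to build an eigenvector of the semi-infinite operator directly out of a Bloch-type solution of the three-term block recurrence. For (i), take $\lambda\in\sigma(\mathbf{B}_0)$ with eigenvector $\mathbf{w}\in\C^{k-1}$, so $(\mathbf{B}_0-\lambda)\mathbf{w}=0$, and set $\mathbf{v}=(\mathbf{w},0)^\top\in\C^k$. The structure of the symbol matters here. In the $k$-Toeplitz form, $\mathbf{A}_{-1}$ has a single nonzero entry $b_k$ in position $(k,1)$, and $\mathbf{A}_1$ has a single nonzero entry $c_k$ in position $(1,k)$; I will fix the precise index placement from \eqref{equ:tridiagonalktoeplitzop1}. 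Since $\mathbf{v}_k=0$, we get $\mathbf{A}_1\mathbf{v}=0$. The first $k-1$ rows of $(\mathbf{A}_0-\lambda)\mathbf{v}$ then vanish because $\mathbf{B}_0\mathbf{w}=\lambda\mathbf{w}$. Only the last row of $[f(z)-\lambda]\mathbf{v}$ remains, and it reads $c_{k-1}\mathbf{w}_{k-1}+z^{-1}b_k\mathbf{w}_1=0$. This is solved by $z(\lambda)=-b_k\mathbf{w}_1/(c_{k-1}\mathbf{w}_{k-1})$. I will use $\lambda\notin\Gamma$ to exclude the degenerate cases $\mathbf{w}_1=0$ or $\mathbf{w}_{k-1}=0$. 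If, say, $\mathbf{w}_{k-1}=0$, the tridiagonal recursion with nonzero off-diagonals forces $\mathbf{w}=0$. This is exactly the content of Theorem~\ref{thm: coburns lemma tridiagonal}(iii), where $z_0$ may be $0$ or $\infty$. I will argue that $z\in\{0,\infty\}$ cannot occur, since otherwise $\det(f(z)-\lambda)$ would be degenerate, which contradicts the root structure in \eqref{equ:detexpansion1}.

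For (ii), define $\mathbf{u}=(\mathbf{v},\,z^{-1}\mathbf{v},\,z^{-2}\mathbf{v},\dots)$ in block form, with $z=z(\lambda)$. Applying $\mathbf{T}(f)$, the $n$-th block row for $n\ge2$ gives $z^{-n+1}\bigl(\mathbf{A}_1 z+\mathbf{A}_0+\mathbf{A}_{-1}z^{-1}\bigr)\mathbf{v}$ after matching the block offsets with the convention of \eqref{def: symbol of operator}; I will check the sign of the exponent carefully against that convention. This vanishes by (i). The first block row is missing the $\mathbf{A}_1$-term, but that term is $\mathbf{A}_1\mathbf{v}=0$ because $\mathbf{v}_k=0$. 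So the first row also reads $(\mathbf{A}_0-\lambda)\mathbf{v}+\mathbf{A}_{-1}z^{-1}\mathbf{v}=[f(z)-\lambda]\mathbf{v}=0$. Hence $\mathbf{T}(f)\mathbf{u}=\lambda\mathbf{u}$. The condition $|z|>1$ gives geometric decay like $|z|^{-n}$, so $\mathbf{u}\in\ell^2$ and decays exponentially away from the edge. Since $\lambda\notin\Gamma\supseteq\sigma_{\mathrm{ess}}$, and in general $\sigma_{\mathrm{det}}\subseteq\Gamma$, $\lambda$ is an edge eigenvalue. If the orientation convention is reversed, the statement should read $|z|<1$ for the ansatz $z^{n}$; I will align this with the paper's definition of $f$.

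For (iii), the inclusion ``$\supseteq$'' follows from \eqref{eq: spectrum Toeplitz operator} together with $\sigma_{\mathrm{edge}}\subseteq\sigma$. For ``$\subseteq$'', let $\lambda\in\sigma(\mathbf{T}(f))\setminus\sigma_{\mathrm{det}}(f)$. I will apply Theorem~\ref{thm: coburns lemma tridiagonal} to $f-\lambda$. Since $\lambda\notin\sigma_{\mathrm{ess}}$, the operator $\mathbf{T}(f)-\lambda$ is Fredholm. If $\lambda\in\sigma_{\mathrm{wind}}$, the winding is nonzero, so either the kernel or the cokernel is nontrivial. Coburn's alternative then says that, outside $\sigma(\mathbf{B}_0)$, exactly one of the two can be nontrivial. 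A nontrivial kernel yields an $\ell^2$ eigenvector, which is necessarily of the Bloch form above with $|z|>1$, so $\lambda$ is an edge eigenvalue. A nontrivial cokernel is handled by passing to the transpose, which is again a tridiagonal $k$-Toeplitz operator. If instead $\lambda\notin\sigma_{\mathrm{wind}}$, then the index is zero and Coburn forces invertibility unless $\lambda\in\sigma(\mathbf{B}_0)$, in which case the eigenvector again gives an edge mode.

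I expect the main obstacle to be this last step: showing that every spectral point outside $\sigma_{\mathrm{det}}$ is a genuine eigenvalue with an exponentially decaying eigenvector, especially inside $\sigma_{\mathrm{wind}}$, where the failure of invertibility may be a cokernel rather than a kernel. My first attempt will be to show that any $\ell^2$ solution of the recurrence is a combination of the two Bloch solutions $z_1^{-n},z_2^{-n}$. The boundary condition then picks out a single one, with $\mathbf{v}_k=0$. Within the region of nontrivial winding, I will interpret $\sigma_{\mathrm{edge}}$ as also including these points.
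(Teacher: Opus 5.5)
Your argument is sound in the symmetric setting, but the proposal tries to prove the statement without the symmetry the paper imposes just before Corollary~\ref{cor: symmetric Toeplitz spectrum}. In that generality several of your steps fail, because the statement itself fails.

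First, $\sigma_{\mathrm{det}}(f)\subseteq\Gamma$ is not true in general. By Lemma~\ref{lemma: open limit essential spectrum}, $\Gamma$ is traced out by $f$ on the circle of radius $r=\sqrt{\prod_i|b_i/c_i|}$, not on $\mathbb{T}$. What (ii) actually needs is $\Gamma=\sigma_{\mathrm{ess}}(\mathbf{T}(f))$, which is Lemma~\ref{lemma: open limit and essential spectrum} and rests on $r=1$.

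Second, the branch $\lambda\in\sigma_{\mathrm{wind}}(f)$ of (iii) cannot be completed. Suppose the kernel is trivial and the cokernel is not. Passing to the transpose then gives an eigenvector of $\mathbf{T}(f)^\top$, not of $\mathbf{T}(f)$. So $\lambda$ is not an eigenvalue of $\mathbf{T}(f)$ and cannot lie in $\sigma_{\mathrm{edge}}(\mathbf{T}(f))$. Take zero diagonal, $1$ on the superdiagonal and $2$ on the subdiagonal, blocked with any $k$. Every point strictly inside the ellipse $\{e^{-\i\theta}+2e^{\i\theta}\}$ makes $\mathbf{T}(f)-\lambda$ Fredholm of index $-1$ with trivial kernel. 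Such points lie in $\sigma(\mathbf{T}(f))\setminus\bigl(\sigma_{\mathrm{det}}(f)\cup\sigma_{\mathrm{edge}}(\mathbf{T}(f))\bigr)$, so (iii) is false there.

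If instead the kernel is nontrivial (swap the two off-diagonals), both Floquet solutions decay. The kernel vector is then the combination of the two that satisfies the single boundary row. In general it is not one Bloch vector with $\mathbf{v}_k=0$, and $\lambda$ need not lie in $\sigma(\mathbf{B}_0)$. Reinterpreting $\sigma_{\mathrm{edge}}$ to cover these points changes the theorem rather than proving it.

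The remedy is to drop that branch and work in the symmetric setting:
\begin{enumerate}[(a)]
\item Corollary~\ref{cor: symmetric Toeplitz spectrum} gives $\sigma_{\mathrm{wind}}(f)=\emptyset$. So for $\lambda\notin\sigma_{\mathrm{det}}(f)$ the operator $\mathbf{T}(f)-\lambda$ is Fredholm of index zero, and non-invertibility gives a nontrivial kernel.
\item The Floquet multipliers multiply to $1$ and $\lambda\notin\Gamma$, so exactly one Bloch solution decays. It is this $\ell^2$ requirement, not the boundary condition, that singles it out, so the kernel vector is a multiple of it.
\item The first scalar row then forces $\mathbf{v}_k=0$. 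This recovers the vector from (i) with $|z(\lambda)|>1$.
\end{enumerate}

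With this restriction your proof is correct and takes a somewhat different, more complete route than the paper's. In (i) the paper simply quotes Coburn's lemma~(iii). You instead build $\mathbf{v}=(\mathbf{w},0)^\top$ with the explicit $z(\lambda)=-b_k\mathbf{w}_1/(c_{k-1}\mathbf{w}_{k-1})$. Note that it is the nonzero off-diagonal entries, not $\lambda\notin\Gamma$, that rule out $\mathbf{w}_1=0$ or $\mathbf{w}_{k-1}=0$. So (i) holds for every $\lambda\in\sigma(\mathbf{B}_0)$.

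In (iii) the paper only asserts that the Bloch extension with $\mathbf{v}_k=0$ is the sole $\ell^2$ eigenvector. It never explains why a spectral point outside $\sigma_{\mathrm{det}}(f)$ must be an eigenvalue at all. Your index-zero argument supplies that step, and your two-Floquet-solution analysis justifies the assertion.
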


\begin{proof}
    To show \textit{(i)} by Theorem \ref{thm: coburns lemma tridiagonal}, it follows that, if $\lambda \in \sigma(\mathbf{B}_0)$, then there is a vector $\mathbf{v}$ and a root $z\in \C$ such that $\mathbf{v}\in\operatorname{ker}\big(z^{-1}\mathbf{A}_{-1} + (\mathbf{A}_0-\lambda\Id)\big)\cap\operatorname{ker}(\mathbf{A}_1)$, but then $\vect{v}$ also satisfies
    \begin{equation}
        \big[f(z)- \lambda\Id\big]\vect{v} =\big[z^{-1}\mathbf{A}_{-1} +(\mathbf{A}_0-\lambda\Id)\big]\vect{v} + z\mathbf{A}_1\vect{v} = 0.
    \end{equation}
    So $\mathbf{v}$ is an eigenvector of $f(z)$ corresponding to the eigenvalue $\lambda$. Since $\mathbf{A}_1 \in \mathbb{C}^{k \times k}$ has non-zero entries only in the top right corner, it follows that $\mathbf{v}_k = 0$. 
    
    To show \textit{(ii)}, it is not hard to see that if $|z(\lambda)|>1$ then, 
    \begin{equation}\label{eq: quas extension}
        \mathbf{w} = \big(\mathbf{v}^\top, z(\lambda)^{-1}\mathbf{v}^\top, z(\lambda)^{-2}\mathbf{v}^\top, \dots\big) \in \ell^2(\N)
    \end{equation}
    satisfies $\mathbf{T}(f)\mathbf{w} = \lambda \mathbf{w}$, from which it follows that $\lambda \in \sigma_{\mathrm{edge}}\big(\mathbf{T}(f)\big)\subseteq \sigma\big(\mathbf{T}(f)\big)$.
    
    To show the spectral equality in \textit{(iii)}, let $\lambda \in  \sigma\big(\mathbf{T}(f)\big)\setminus\sigma_{\mathrm{det}}(f)$. But as only the quasiperiodically extended Bloch eigenvector $\mathbf{v}$ with $\mathbf{v}_k = 0$ is an $\ell^2$ eigenvector of $\mathbf{T}(f)$, it follows from \textit{(ii)} that $\lambda \in \sigma_{\mathrm{edge}}\big(\mathbf{T}(f)\big)$, which completes the proof of the theorem.
\end{proof}

Using our novel insights on tridiagonal $k$-Toeplitz operators we may greatly simplify the results of Widom \cite{WIDOM1974284} on the asymptotic spectrum of block Toeplitz matrices, which will be presented in greater detail in Section \ref{sec: open limit}.
Let us introduce the following set
\begin{equation}\label{def: G0}
    G_0 := \big\{ \lambda \in \C \setminus \Gamma :C_0(\lambda) = 0 \big\},
\end{equation}
with
\begin{equation}\label{def: C0}
    C_0(\lambda) := \operatorname{det}\Big(\mathbf{T}\big((f-\lambda\Id)^{-1}\big)\mathbf{T}(f- \lambda \Id)\Big).
\end{equation}
The function $C_0$ may alternatively be defined as 
\begin{equation}\label{eq: alternative C_0}
    C_0(\lambda) = \operatorname{det}\left(\frac{1}{2\pi \i} \int_{\gamma}\big(f(z)-\lambda\Id\big)^{-1} \frac{\d z}{z}\right),\quad \lambda\in\C\setminus\Gamma,
\end{equation}
where $\gamma$ is a counter-clockwise oriented closed Jordan curve enclosing the origin and the root of smallest magnitude $z_1(\lambda)$ but not the root $z_2(\lambda)$.
Widom already observed in \cite{WIDOM1974284} that in the $1$-Toeplitz case one must have $G_0 = \emptyset$. For the more general tridiagonal $k$-Toeplitz matrices, Coburn's lemma gives a valuable insight on structure of the set $G_0$.

\begin{theorem}\label{thm: edge mode characcterisation}
    Let $f\in \C^{k\times k}$ be the symbol function of a tridiagonal $k$-Toeplitz matrix, then it holds that 
    \begin{equation}\label{eqref: inclusion G0}
        G_0 \subseteq \sigma(\mathbf{B_0})\cup \sigma(\mathbf{B}_1),
    \end{equation}
    where $\mathbf{B}_0,\mathbf{B}_1\in \C^{k-1\times k-1} $ are the leading and trailing principal submatrices of $\mathbf{A}_0$ receptively as in \eqref{def: B0}.
\end{theorem}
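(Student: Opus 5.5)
We prove the contrapositive: if $\lambda\in\C\setminus\Gamma$ lies in neither $\sigma(\mathbf{B}_0)$ nor $\sigma(\mathbf{B}_1)$, then $C_0(\lambda)\neq 0$. Writing $F:=f-\lambda\Id$, the natural route is through the factorisation underlying \eqref{def: C0}. The operator $\mathbf{T}(F^{-1})\mathbf{T}(F)$ is a finite-rank perturbation of the identity, and its determinant vanishes exactly when the product fails to be injective, i.e.\ when $\mathbf{T}(F)$ or $\mathbf{T}(F^{-1})$ has a nontrivial kernel or a cokernel that is not cancelled. Since $\lambda\notin\Gamma$, the roots satisfy $|z_1(\lambda)|<|z_2(\lambda)|$. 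By \eqref{eq: alternative C_0}, after rescaling $z\mapsto \rho z$ with $|z_1|<\rho<|z_2|$ we may assume that $\det F$ has no zeros on the contour. Then $\det F(z)\cdot z$ has exactly one zero inside and one outside, so the winding number vanishes and $\mathbf{T}(F)$ is Fredholm of index $0$ on the rescaled space.

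The first step is to reduce $C_0(\lambda)\neq0$ to invertibility of the two Toeplitz operators $\mathbf{T}(F)$ and $\mathbf{T}(\tilde F)$. Here $\tilde F(z)=F(1/z)$, or equivalently the transposed symbol, is the one governing the ``flipped'' semi-infinite operator. This uses Widom's identity: $C_0$ is, up to nonzero factors, $\det$ of $P_+F^{-1}$ projected, and for a symbol admitting a Wiener--Hopf factorisation $F=F_-F_+$ we get $C_0=\det\big(\widehat{(F_+^{-1})}_0\,\widehat{(F_-^{-1})}_0\big)$, which is nonzero exactly when the factorisation has zero partial indices. Zero partial indices in turn is equivalent to both $\mathbf{T}(F)$ and its transpose-type counterpart being invertible.

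Each invertibility is then ruled in by Coburn's lemma (Theorem \ref{thm: coburns lemma tridiagonal}) applied to $F$. Since the index is $0$, invertibility is equivalent to a trivial kernel. If the kernel of $\mathbf{T}(F)$ were nontrivial, then, because $\det F\not\equiv 0$ and the index is zero, the range is not dense, so alternatives (i) and (ii) both fail. Alternative (iii) then forces $\ker(\mathbf{B}_0-\lambda)\neq0$, i.e.\ $\lambda\in\sigma(\mathbf{B}_0)$. For the second factor we reverse the index order: the operator whose kernel obstructs $C_0$ is the cokernel of $\mathbf{T}(F)$, which by symmetry of the structure is governed by the transposed or reflected tridiagonal $k$-Toeplitz operator. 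Its leading principal submatrix after reversing indices is the trailing submatrix $\mathbf{B}_1$, so Coburn gives $\lambda\in\sigma(\mathbf{B}_1)$. Together these give \eqref{eqref: inclusion G0}.

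The main obstacle is the precise link between $C_0(\lambda)=0$ and the failure of a kernel or cokernel to be trivial, including the rescaling by $\rho$. Coburn's lemma is stated on $\mathbb{T}$, so I would apply it to the conjugated operator $D_\rho\mathbf{T}(F)D_\rho^{-1}=\mathbf{T}(F(\rho\,\cdot))$. This conjugation rescales $b_i,c_i$ but leaves $\mathbf{A}_0$, and hence $\mathbf{B}_0$ and $\mathbf{B}_1$, unchanged, which is the key invariance. I would also need to verify the identification of the reflected operator's leading submatrix with $\mathbf{B}_1$ directly from \eqref{def: B0}.
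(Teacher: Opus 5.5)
Your skeleton matches the paper's proof. $C_0(\lambda)=0$ forces $\mathbf{T}(F)$ or $\mathbf{T}(F^{-1})$ to be non-invertible. The first case lands in $\sigma(\mathbf{B}_0)$, and the second, through a reflected symbol, in $\sigma(\mathbf{B}_1)$. Your handling of the first case is sound: you rescale to a circle $|z|=\rho$ separating $z_1(\lambda)$ from $z_2(\lambda)$, use index zero to get a nontrivial kernel together with a non-dense range, and conclude via alternative (iii) of Theorem~\ref{thm: coburns lemma tridiagonal} for $\mathbf{B}_0-\lambda$. This is more careful than the paper, which routes this case through Corollary~\ref{cor: symmetric Toeplitz spectrum} and thus implicitly through the symmetric situation $\Gamma=\sigma_{\mathrm{det}}(f)$.

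The gap is in the $\mathbf{B}_1$ half. In your reduction you single out $\mathbf{T}(\tilde F)$. But when you apply Coburn you describe the obstruction as ``the cokernel of $\mathbf{T}(F)$'', governed by ``the transposed or reflected'' operator, and that step fails as written:
\begin{itemize}
\item $\mathbf{T}(F)$ has index zero, so its cokernel is nontrivial exactly when its kernel is, which you have already tied to $\mathbf{B}_0$.
\item The transpose $\mathbf{T}(F)^{\top}$, whose symbol is $z\mapsto F(z^{-1})^{\top}$, is again tridiagonal $k$-Toeplitz with diagonal block $(\mathbf{A}_0-\lambda)^{\top}$. Coburn applied to it only returns $\sigma(\mathbf{B}_0^{\top})=\sigma(\mathbf{B}_0)$.
\end{itemize}
The second obstruction is $\mathbf{T}(F^{-1})$, which is not determined by $\mathbf{T}(F)$. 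Its invertibility corresponds to a canonical factorisation $F=F_+F_-$, while that of $\mathbf{T}(F)$ corresponds to $F=F_-F_+$. For matrix symbols these are independent conditions. This is also why your claim that a single factorisation with zero partial indices yields both invertibilities is not right.

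The fact you actually need, which the paper takes from B\"ottcher--Grudsky, is that $\mathbf{T}(F^{-1})$ is invertible if and only if $\mathbf{T}(\tilde F)$ is, with $\tilde F(z)=W_kF(z^{-1})W_k$. The conjugation by $W_k$ is what makes $\mathbf{T}(\tilde F)$ tridiagonal. Its diagonal block $W_k(\mathbf{A}_0-\lambda)W_k$ has leading submatrix $W_{k-1}(\mathbf{B}_1-\lambda)W_{k-1}$, so index zero plus Coburn gives $\lambda\in\sigma(\mathbf{B}_1)$.

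You can drop the Wiener--Hopf detour entirely. For the contrapositive it suffices to note that if $\mathbf{T}(F)$ and $\mathbf{T}(F^{-1})$ are both invertible, then $\mathbf{T}(F^{-1})\mathbf{T}(F)$ is invertible. Being the identity plus a finite-rank operator, it then has nonzero determinant.
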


\begin{proof}
    Let $\lambda \in G_0$ then it holds by \eqref{def: C0} that $\operatorname{det}\big(\mathbf{T}((f-\lambda\Id)^{-1})\mathbf{T}(f- \lambda \Id)\big) = 0.$
    In other words, $\mathbf{T}((f-\lambda\Id)^{-1})\mathbf{T}(f- \lambda \Id)$ is not invertible. This implies that either $\mathbf{T}(f-\lambda\Id)$ or $\mathbf{T}((f-\lambda\Id)^{-1})$ is not invertible. We discuss both cases.
    First note that $\mathbf{T}(f-\lambda\Id) = \mathbf{T}(f)-\lambda\Id$, since $\mathbf{T}(f)-\lambda\Id$ is not invertible, it must hold that $\lambda \in \sigma(\mathbf{T})$. Since by assumption $\lambda \not\in \Gamma$, it follows from Corollary \ref{cor: symmetric Toeplitz spectrum} that $\lambda \in \sigma(\mathbf{B}_0)$.
    In the second case, we have to introduce the associated Toeplitz operator $\tilde{f}(z) = W_kf(z^{-1})W_k$ and $(W_k)_{ij} = \delta_{i(k+1-j)}$, then it holds that $\mathbf{T}\big(\tilde{(f-\lambda)}\big)$ is invertible if and only if $\mathbf{T}((f-\lambda\Id)^{-1})$ is invertible \cite[Section 6.2]{LargeTruncatedToeplitz}. This implies that $\lambda \in \sigma(\mathbf{T}(\tilde{f}))$ so $\lambda \in \sigma(\mathbf{B}_1)$, which completes the proof.
\end{proof}

Theorem~\ref{thm: edge mode characcterisation} is especially convenient for the numerical computation of the set $G_0$. Rather than checking the condition $C_0(\lambda) = 0$ for all $\lambda \in \mathbb{C} \setminus \Gamma$, it suffices to verify the condition $C_0(\lambda)=0$ at at most $2k-2$ discrete values of $\lambda$, explicitly given by $\sigma(\mathbf{B_0})\cup \sigma(\mathbf{B}_1)$. We emphasise that the converse inclusion in \eqref{thm: edge mode characcterisation} does not hold in general. A counterexample is the symbol function
\begin{equation}
    f(z) = 
    \begin{pmatrix}
        0 & 1 + \frac{1}{2}z \\
        1 + \frac{1}{2}z^{-1}&1\\
    \end{pmatrix}.
\end{equation}
Then $0 \in \sigma(\mathbf{B}_0)$, as well as $1\in \sigma(\mathbf{B}_1)$, but $C_0(0) = 1$ and $C_0(1) = 1$, so $0,1 \not\in G_0$.
An alternative characterisation of edge states, compared to Theorem \ref{cor: coburn eigenvector} may be achieved using the set $G_0$ by the following result.

\begin{theorem}\label{thm: existence of edge modes}
    If $\lambda \in \C$ is an edge eigenvalue of $\mathbf{T}(f)$, if and only if $\lambda \in G_0\cap\sigma(\mathbf{B}_0)$.
\end{theorem}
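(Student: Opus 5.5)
We prove the two implications separately, using the Wiener--Hopf type factorisation behind $C_0(\lambda)$ together with Theorem \ref{cor: coburn eigenvector}.

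\emph{Forward direction.} Let $\lambda$ be an edge eigenvalue. By Definition \ref{def: edge state}, $\lambda \notin \sigma_{\mathrm{ess}}(\mathbf{T}(f))$. In the symmetric setting, Lemma \ref{lemma: open limit and essential spectrum} identifies this set with $\Gamma$, so $\lambda \notin \Gamma$. In general, the edge eigenvector is built from the root $z(\lambda)$ with $|z(\lambda)|>1$, and the two roots have different moduli, so again $\lambda\notin\Gamma$. Definition \ref{def: edge state} also gives $\sigma_{\mathrm{edge}}\subseteq\sigma(\mathbf{B}_0)$, hence $\lambda\in\sigma(\mathbf{B}_0)$. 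Finally, $\mathbf{T}(f)-\lambda\Id=\mathbf{T}(f-\lambda\Id)$ has the $\ell^2$ eigenvector $\mathbf{w}$ of \eqref{eq: quas extension}, so it has nontrivial kernel. Then the product $\mathbf{T}((f-\lambda\Id)^{-1})\mathbf{T}(f-\lambda\Id)$ annihilates $\mathbf{w}$. This product is of the form identity plus trace class (Widom's framework, valid since $\lambda\notin\Gamma$ makes $(f-\lambda)^{-1}$ smooth on a suitable contour). Its Fredholm determinant $C_0(\lambda)$ therefore vanishes, so $\lambda\in G_0$.

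\emph{Reverse direction.} Let $\lambda\in G_0\cap\sigma(\mathbf{B}_0)$. Since $\lambda\notin\Gamma$, Theorem \ref{cor: coburn eigenvector}(i) provides $z(\lambda)$ and $\mathbf{v}$ with $\mathbf{v}_k=0$ solving \eqref{eq: B0 Bloch eigenvector}. By Theorem \ref{cor: coburn eigenvector}(ii), it suffices to show $|z(\lambda)|>1$, or more precisely that $z(\lambda)$ is the root $z_2(\lambda)$ outside the contour $\gamma$ in \eqref{eq: alternative C_0}.

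We argue by contradiction. Suppose $z(\lambda)=z_1(\lambda)$ is the small root. Then $\mathbf{w}$ from \eqref{eq: quas extension} is not in $\ell^2$, and $\mathbf{T}(f)-\lambda$ has trivial kernel: by Coburn's lemma, any kernel element must come from such a quasi-periodic extension. We then aim to show $C_0(\lambda)\neq0$. Use the integral formula \eqref{eq: alternative C_0} and compute the residue of $(f(z)-\lambda)^{-1}$ at $z_1$. Since $\det(f(z)-\lambda)=\big(\prod_i c_i\big)(z-z_1)(z-z_2)/z$ up to sign, the residue is rank one, proportional to $\operatorname{adj}(f(z_1)-\lambda)/z_1$. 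The adjugate is $\mathbf{v}\mathbf{u}^\top$, where $\mathbf{u}$ is a left null vector of $f(z_1)-\lambda$. The matrix $\frac{1}{2\pi\i}\int_\gamma(f-\lambda)^{-1}\,\d z/z$ then equals the residue at $0$ plus this rank-one term. The residue at $0$ comes from the $z^{-1}\mathbf{A}_{-1}$ part and can be computed explicitly from the tridiagonal structure. We would then evaluate the determinant by the matrix determinant lemma. The condition $\mathbf{v}_k=0$ should make the rank-one correction interact with the corner entry $b_k$ (or $c_k$) in a way that forces the determinant to be nonzero. This contradicts $\lambda\in G_0$.

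\emph{Main obstacle.} The explicit determinant computation in the reverse direction is the hard step. It requires the residue at $z=0$ and a careful factorisation of $\operatorname{adj}(f(z)-\lambda)$ in the tridiagonal setting, and it is exactly where the interplay between $\sigma(\mathbf{B}_0)$ and $\sigma(\mathbf{B}_1)$ enters. The counterexample above, where $C_0=1$ at points of $\sigma(\mathbf{B}_0)$, indicates that the computation should give $C_0(\lambda)=0$ precisely when the $\mathbf{B}_0$ eigenvector attaches to the large root. If the direct computation proves unwieldy, we would first fall back on the Fredholm-theoretic route: show that $\mathbf{T}((f-\lambda)^{-1})$ is invertible whenever $\lambda\notin\sigma(\mathbf{B}_1)$, via the reflection $\tilde f$ used in the proof of Theorem \ref{thm: edge mode characcterisation}. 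Then $C_0(\lambda)=0$ forces $\ker(\mathbf{T}(f)-\lambda)\neq0$, which is an edge mode. The case $\lambda\in\sigma(\mathbf{B}_0)\cap\sigma(\mathbf{B}_1)$ would then need separate treatment.
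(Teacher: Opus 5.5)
Your forward direction is correct and is essentially the paper's argument. A nonzero $\ell^2$ kernel vector of $\mathbf{T}(f)-\lambda\Id$ is annihilated by $\mathbf{T}((f-\lambda\Id)^{-1})\mathbf{T}(f-\lambda\Id)$, which is the identity plus a finite-rank operator, so $C_0(\lambda)=0$. Membership $\lambda\in\sigma(\mathbf{B}_0)$ holds by definition. One caveat: your claim that $\lambda\notin\Gamma$ holds ``in general'' because $|z(\lambda)|>1$ fails outside the symmetric case, since there $|z_1z_2|=r^2\neq 1$. Stay in the symmetric setting, where $\Gamma=\sigma_{\mathrm{ess}}$.

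The reverse direction has a genuine gap. Your primary route is only a plan. You never compute the residue at $z=0$, and the claim that $\mathbf{v}_k=0$ ``should'' force the determinant to be nonzero is not an argument.

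Your fallback is the right reduction. For $\lambda\notin\Gamma$, $\det(f-\lambda\Id)$ has winding $0$ on $\mathbb{T}$: one zero inside and a simple pole at $0$. Hence $\mathbf{T}(f)-\lambda\Id$, $\mathbf{T}((f-\lambda\Id)^{-1})$ and $\mathbf{T}(\tilde f)-\lambda\Id$ are Fredholm of index zero. So $C_0(\lambda)=0$ means that $\mathbf{T}(f)-\lambda\Id$ or $\mathbf{T}(\tilde f)-\lambda\Id$ has a nontrivial kernel, and the second alternative forces $\lambda\in\sigma(\mathbf{B}_1)$. But you stop exactly at the only nontrivial case, $\lambda\in\sigma(\mathbf{B}_0)\cap\sigma(\mathbf{B}_1)$. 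There, $C_0(\lambda)=0$ could a priori come from a right-edge mode, i.e.\ a kernel of $\mathbf{T}(\tilde f)-\lambda\Id$, while the $\mathbf{B}_0$ Bloch vector sits at the small root. As written, the inclusion $G_0\cap\sigma(\mathbf{B}_0)\subseteq\sigma_{\mathrm{edge}}(\mathbf{T}(f))$ is not proved.

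The missing idea is that, in the symmetric setting, left and right edge modes at such a $\lambda$ come in pairs.

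\begin{itemize}
\item Let $M(\lambda)\in\C^{2\times 2}$ be the one-period transfer matrix $(x_0,x_1)\mapsto(x_k,x_{k+1})$ of the recurrence $c_{n-1}x_{n-1}+(a_n-\lambda)x_n+b_nx_{n+1}=0$, with $c_0=c_k$.
\item $\lambda\in\sigma(\mathbf{B}_0)$ if and only if $(0,1)^\top$ is an eigenvector of $M$, with multiplier $\mu_L$.
\item $\lambda\in\sigma(\mathbf{B}_1)$ if and only if $(1,0)^\top$ is an eigenvector of $M$, with multiplier $\mu_R$.
\item $\det M=\prod_i c_i/\prod_i b_i=1$ in the symmetric case.
\item The left edge mode exists if and only if $|\mu_L|<1$.
\item Read in the original site labelling, a kernel element of $\mathbf{T}(\tilde f)-\lambda\Id$ is the solution vanishing at all sites $\equiv 1 \pmod k$. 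It is square-summable towards the left if and only if $|\mu_R|>1$.
\end{itemize}

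On $\sigma(\mathbf{B}_0)\cap\sigma(\mathbf{B}_1)$ one has $\mu_L\mu_R=1$, so the two decay conditions coincide. Equivalently, the Bloch vectors with $\mathbf{v}_k=0$ and $\mathbf{u}_1=0$ must sit at different roots; otherwise $M$ would be scalar with $|\mu|=1$, i.e.\ $\lambda\in\Gamma$.

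Hence $C_0(\lambda)=0$ together with $\lambda\in\sigma(\mathbf{B}_0)$ always yields $\ker(\mathbf{T}(f)-\lambda\Id)\neq\{0\}$. This means $|z(\lambda)|>1$, and Theorem \ref{cor: coburn eigenvector}(ii) finishes the proof.

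The paper's own proof handles this step in one line. It invokes the dichotomy from the proof of Theorem \ref{thm: edge mode characcterisation} as though $\lambda\in\sigma(\mathbf{B}_0)$ already selected the branch where $\mathbf{T}(f)-\lambda\Id$ is singular. Your fallback has located precisely the point this glosses over, and the pairing argument above is what closes it.
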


\begin{proof}
    Assume $\lambda \in \C$ is an edge eigenvalue in other words $\mathbf{T}(f)-\lambda\Id$ has non-trivial kernel $\mathbf{v}$, so it is not invertible, consequently $\operatorname{det}\big(\mathbf{T}\big((f-\lambda\Id)^{-1}\big)\mathbf{T}(f- \lambda \Id)\big) = 0$ so $\lambda \in G_0$. Following Definition \ref{def: edge state}, $\lambda \in \sigma(\mathbf{B}_0)$, therefore $\lambda \in G_0 \cap \sigma(\mathbf{B}_0)$.
    Assume now that $\lambda \in G_0\cap\sigma(\mathbf{B}_0)$, using that $\lambda \in \sigma(\mathbf{B}_0)$ it follows by the same method as in the proof of Theorem \ref{thm: edge mode characcterisation} it must hold that $\mathbf{T}(f)-\lambda\Id$ has non-trivial kernel. Since $\lambda \in G_0$ the eigenmode $\mathbf{v} \in \ell^2$ it follows that $|z(\lambda)|>1$, so by Theorem \ref{cor: coburn eigenvector} \textit{(ii)} $\lambda \in \sigma_{\mathrm{edge}}\big(\mathbf{T}(f)\big)$ which completes the proof. 
\end{proof}

\begin{example}\label{Example: existence of edge modes}
To illuminate the connection between the bulk-boundary correspondence established by the Zak phase and the characterisation of edge modes in Theorems~\ref{cor: coburn eigenvector} and \ref{thm: existence of edge modes}, we examine the case of a two-band model. The symbol function is defined as follows,
\begin{equation}
    f(z) = \begin{pmatrix}
        a_1&b_1 + b_2z\\
        b_1 + b_2/z &a_2
    \end{pmatrix}~\in \C^{2\times 2},    
\end{equation}  
By Theorem \ref{cor: coburn eigenvector} $\lambda \in \sigma(\mathbf{B}_0)= \{a_1\}$ is an edge mode provided that $|z|>1$. Following Theorem \ref{cor: coburn eigenvector} \textit{(i)}, $\mathbf{v} = [1, 0]^\top$ and a direct computation yields that,
\begin{equation}
    \mathbf{v}_1(b_1 +b_2/z) + 0a_2 =0 \Leftrightarrow z = -b_2/b_1,
\end{equation} 
and $\lambda$ is a edge mode provided that $|b_1| < |b_2|$.
Let us now consider the edge mode characterisation provided in Theorem \ref{thm: existence of edge modes}. The following inverse is explicitly given by,
\begin{equation}
    \big(f(z)-\lambda \Id\big)^{-1} = \frac{1}{(a_1-\lambda)(a_2-\lambda)-b_1^2 + b_2^2 + b_1 b_2\left(z + \frac{1}{z}\right)}\begin{pmatrix}
        a_1-\lambda& -b_1-b_2z \\
        -b_1 - b_2/z & a_2-\lambda
    \end{pmatrix}.
\end{equation}
Taking $\lambda \in \sigma(\mathbf{B}_0) = \{a_1\}$, computing entry-wise contour integrals, and then taking the determinant yields,
\begin{align}
    C_0(a_1) &= \operatorname{det}\left(\frac{1}{2\pi \i} \int_{\gamma}\big(f(z)-a_1\Id\big)^{-1} \frac{\d z}{z}\right) = \begin{cases} 1/b_1^2 & \text{if } |b_1| > |b_2|, \\ 0 & \text{if } |b_1| < |b_2|. \end{cases}
\end{align}
Both Theorems are consistent with the bulk boundary correspondence via the Zak phase \cite{Edge_Modes}, where it was shown that the Zak phase is non-trivial provided that $|b_1| <|b_2|$.
\end{example}

In the sequel, we will examine the topological properties of edge modes. Topological classifications are of particular use in physics as they demonstrate robustness with respect to perturbations. 
\begin{definition}[Topologically protected]\label{def: topologically protected}
An edge mode to the eigenvalue $\lambda$ is said to be topologically protected if it is robust under continuous perturbations of the symbol function that preserve local inversion symmetry, provided that the band gap containing $\lambda$ does not close.
\end{definition}

Since the topological classification of edge modes differs markedly between the Hermitian and complex symmetric cases, each is treated separately in the subsequent sections.

\subsection{Edge modes in real symmetric systems} Toeplitz matrices with real-valued and symmetric entries play a central role for the applications to lattice models, see Section~\ref{Sec: resonator chains}. Because Hermitian Toeplitz operators have a real spectrum, the spectral gap can be defined as $\Gamma := \R \setminus \sigma_{\mathrm{ess}}$, which consists of a collection of open disjoint intervals. Edge modes can be ordered within each such band gap, and this implies that there can be at most one edge-induced interface mode $\lambda \in \sigma(\mathbf{B}_0)$ in any given spectral gap.
\begin{lemma}\label{lem: uniqueness edge induced interface}
    Let $\lambda \in \sigma(\mathbf{B}_0) \setminus \sigma_{\mathrm{ess}}\big(\mathbf{T}(f)\big)$ be an eigenvalue of a Hermitian Toeplitz operator $\mathbf{T}(f)$, then $\lambda$ is the unique edge eigenvalue in the specific spectral gap situated between two spectral bands.
\end{lemma}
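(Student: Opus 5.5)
The plan is to combine the inclusion $\sigma_{\mathrm{edge}}\big(\mathbf{T}(f)\big)\subseteq\sigma(\mathbf{B}_0)$ from Definition~\ref{def: edge state} and Theorem~\ref{cor: coburn eigenvector} with Cauchy's interlacing theorem applied to the Hermitian matrices $f(e^{\i\alpha})$. The key observation is that $\mathbf{B}_0$ is the leading $(k-1)\times(k-1)$ principal submatrix not only of $\mathbf{A}_0$ but of $f(z)$ for \emph{every} $z$. Indeed, $\mathbf{A}_{-1}z^{-1}$ and $\mathbf{A}_1 z$ only contribute to the corner entries $(k,1)$ and $(1,k)$ (the coupling $b_k$), and these lie outside the leading block. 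Moreover, in the Hermitian case $f(e^{\i\alpha})$ is Hermitian for $\alpha\in[-\pi,\pi]$, and $\mathbf{B}_0$ is Hermitian.

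First I would order the eigenvalues of $f(e^{\i\alpha})$ as $\mu_1(\alpha)\le\dots\le\mu_k(\alpha)$ and those of $\mathbf{B}_0$ as $\beta_1\le\dots\le\beta_{k-1}$. By \eqref{eq: essential spectrum} and Lemma~\ref{lemma: open limit and essential spectrum}, $\sigma_{\mathrm{ess}}\big(\mathbf{T}(f)\big)=\bigcup_{j}I_j$, where $I_j=\mu_j([-\pi,\pi])=[\min_\alpha\mu_j,\max_\alpha\mu_j]$ is the $j$-th band. Each $I_j$ is a closed interval by continuity of ordered eigenvalues, and the bands are ordered, $\min I_j\le\min I_{j+1}$ and $\max I_j\le \max I_{j+1}$. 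The spectral gaps are then the open intervals $(\max I_j,\min I_{j+1})$ whenever these are non-empty, together with the two unbounded components.

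Next, Cauchy interlacing gives $\mu_j(\alpha)\le\beta_j\le\mu_{j+1}(\alpha)$ for every $\alpha$. Taking the maximum over $\alpha$ on the left and the minimum on the right yields
\begin{equation*}
\max_\alpha\mu_j(\alpha)\le\beta_j\le\min_\alpha\mu_{j+1}(\alpha),\qquad j=1,\dots,k-1.
\end{equation*}
Hence $\beta_j$ lies in the closure of the $j$-th gap between bands $I_j$ and $I_{j+1}$. In particular, no eigenvalue of $\mathbf{B}_0$ can lie in the two unbounded gaps. If $\beta_j\notin\sigma_{\mathrm{ess}}$, then $\beta_j$ lies in the open gap $(\max I_j,\min I_{j+1})$.

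Finally, I would show that two distinct indices cannot produce points in the same gap. For $i<j$ we have $\beta_i\le\min I_{i+1}\le\max I_{j}\le\beta_j$, so the band $I_{i+1}$ (with $i+1\le j$) separates them. If both were in the same open gap, that gap would have to be disjoint from $I_{i+1}$ while containing points on both sides of it, which is impossible. Since every edge eigenvalue belongs to $\sigma(\mathbf{B}_0)\setminus\sigma_{\mathrm{ess}}$, each gap contains at most one edge eigenvalue, namely $\lambda=\beta_j$ for the gap's index $j$, and this gives the uniqueness claim.

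The main obstacle I expect is degenerate configurations: flat bands (where $I_j$ is a single point), coinciding $\beta_j=\beta_{j+1}$, or eigenvalue crossings of $f(e^{\i\alpha})$. I would handle these by noting that in each such case the coincident value necessarily lies in some band $I_{j+1}$, hence in $\sigma_{\mathrm{ess}}$, and is excluded by hypothesis. I would also check that the ordered eigenvalues $\mu_j(\alpha)$ are continuous in $\alpha$, which follows from continuity of the spectrum of Hermitian matrices.
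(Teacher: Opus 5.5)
Your proposal is correct and follows essentially the same route as the paper: Cauchy interlacing of the $z$-independent leading block $\mathbf{B}_0$ against the Hermitian matrices $f(e^{\i\alpha})$ for all $\alpha$. The paper simply asserts that uniqueness follows immediately from interlacing, whereas you spell out the steps it leaves implicit: $\max I_j \le \beta_j \le \min I_{j+1}$, so bands can only touch, and the band $I_{i+1}$ separates $\beta_i$ from $\beta_j$ for $i<j$.
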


\begin{proof}
    The essential spectrum is given by \eqref{eq: essential spectrum}, that is the union of eigenvalues of the symbol function $f(z)\in\C^{k\times k}$ evaluated on the torus and consists of up to $k$ disjoint intervals. Then $\mathbf{B}_0$ is the leading $k-1\times k-1$ submatrix of $f(z)$, in particular the entries of $\mathbf{B}_0$ no longer depend on $z$. By the Cauchy interlacing theorem, the $k-1$ eigenvalues of $\mathbf{B}_0$ interlace the $k$ eigenvalues of $f(z),~\forall z \in \mathbb{T}$, that is
    \begin{equation}\label{eq: interlacing B0}
        \lambda_1\big(f(z)\big) \leq \mu_1(\mathbf{B}_0) \leq \lambda_2\big(f(z)\big)\leq \dots \leq \lambda_{k-1}\big(f(z)\big)\leq\mu_{k-1}(\mathbf{B}_0)  \leq \lambda_k\big(f(z)\big), \quad \forall z \in \mathbb{T}.
    \end{equation}
    The uniqueness follows immediately by the interlacing property, which completes the proof of the result.
\end{proof}

In order to derive a topological quantification of edge modes, we impose some symmetry constraints on the symbol function. Let us define the exchange matrix $(\mathbf{J})_{i,j} = \delta_{i, k+1-j}$, that is a matrix with constant $1$ on the antidiagonal, then the symbol function $f(z) \in \C^{k\times k}$ is called \emph{persymmetric} if $f(z) = \mathbf{J} f(z)^\top \mathbf{J}$.
The matrix is called  \emph{centrosymmetric} if it additionally holds that $f(z) = f(z)^\top$. Since $z\in\mathbb{T}$ and the operator $\mathbf{T}(f)$ is symmetric, the symbol function $f(z)$ is centrosymmetric if and only if $z = \pm 1$. The eigenvectors $\mathbf{v}$ of centrosymmetric matrices are symmetric or skew symmetric \cite{CANTONI1976275}, that is $\mathbf{v}_1 = \pm \mathbf{v}_k$.

\begin{theorem}\label{thm: inversion symmetry topological}
    Suppose that $f(z) \in \mathbb{R}^{k \times k}$ is persymmetric, then every edge mode $\lambda \in \sigma(\mathbf{B}_0)\setminus\sigma_{\mathrm{ess}}\big(\mathbf{T}(f)\big)$ is topologically protected.
\end{theorem}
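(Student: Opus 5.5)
We reduce topological protection to the condition $|z(\lambda)|>1$ from Theorem~\ref{cor: coburn eigenvector}\textit{(ii)}. The aim is to show that, along a persymmetry-preserving deformation that keeps the gap open, $|z(\lambda)|$ can never cross the value $1$.

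\textbf{Step 1: the edge eigenvalue stays in its gap.} Consider a continuous family $f_t$, $t\in[0,1]$, of real persymmetric symmetric symbols such that the gap containing $\lambda$ stays open. By Lemma~\ref{lem: uniqueness edge induced interface} and the interlacing \eqref{eq: interlacing B0}, exactly one eigenvalue $\mu_t\in\sigma(\mathbf{B}_0^{(t)})$ lies in the closure of the gap. It depends continuously on $t$, because $\mathbf{B}_0^{(t)}$ does. By Theorem~\ref{cor: coburn eigenvector}\textit{(i)}, as long as $\mu_t\notin\Gamma=\sigma_{\mathrm{ess}}$ (Lemma~\ref{lemma: open limit and essential spectrum}), there is an associated root $z_t=z(\mu_t)$ and a Bloch vector $\mathbf{v}_t$ with $(\mathbf{v}_t)_k=0$. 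In the tridiagonal case $z_t$ is given explicitly by the $k$-th row of $f(z_t)-\mu_t$. Its last entry $c_{k-1}(\mathbf{v}_t)_{k-1}+b_k z_t^{-1}(\mathbf{v}_t)_1$ must vanish, so $z_t$ is a continuous function of $t$ whenever $(\mathbf{v}_t)_1\neq 0$.

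\textbf{Step 2: $|z_t|=1$ forces $z_t=\pm1$.} Since the entries are real and $\mu_t$ is real, $z_t$ is real. Indeed, $\mathbf{v}_t$ can be taken real as an eigenvector of the real matrix $\mathbf{B}_0$, and the formula above then gives a real $z_t$. Therefore $|z_t|=1$ can only occur when $z_t=\pm 1$.

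\textbf{Step 3: at $z=\pm1$ the edge state must vanish.} At $z=\pm1$ the symbol $f(\pm1)$ is symmetric and persymmetric, hence centrosymmetric. Its eigenvectors therefore satisfy $\mathbf{v}_1=\pm\mathbf{v}_k$ \cite{CANTONI1976275}, provided the eigenvalue is simple. If $z_t=\pm1$, the vector $\mathbf{v}_t$ is an eigenvector of $f(\pm1)$ with $(\mathbf{v}_t)_k=0$, so $(\mathbf{v}_t)_1=0$ as well. I would then propagate this through the tridiagonal recursion of $f(\pm1)-\mu_t$, with nonzero off-diagonal couplings, to conclude that $\mathbf{v}_t=0$, which is a contradiction. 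If the eigenvalue of $f(\pm1)$ is degenerate, then $\mu_t$ equals a band edge, which is excluded because the gap stays open.

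\textbf{Conclusion.} Hence $|z_t|\neq1$ for all $t$. By continuity and the intermediate value theorem, $|z_t|>1$ persists from $t=0$. By Theorem~\ref{cor: coburn eigenvector}\textit{(ii)}, $\mu_t$ remains an edge eigenvalue for every $t$, which is the protection required by Definition~\ref{def: topologically protected}.

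\textbf{Main obstacle.} The hardest part is justifying Step 3 and the continuity of $z_t$. Step 3 must rule out that $(\mathbf{v}_t)_1=(\mathbf{v}_t)_k=0$ is compatible with a nonzero eigenvector; tridiagonality with nonzero $b_i$ should make the first entry determine the whole vector. For continuity, one needs $(\mathbf{v}_t)_1\neq0$ along the path, which follows from the same recursion argument applied to $\mathbf{B}_0$. I would try this recursion argument first.
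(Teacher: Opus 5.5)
Your proof is correct and uses the same mechanism as the paper. At $z=\pm1$ the real persymmetric symbol is centrosymmetric, so a Bloch vector with $\mathbf{v}_k=0$ (forced by Coburn's lemma) would also have $\mathbf{v}_1=0$ and hence vanish by the tridiagonal recursion, unless $\mu_t$ is a double eigenvalue of $f(\pm1)$. In that degenerate case, interlacing together with the simplicity of $\sigma(\mathbf{B}_0)$ shows that the two bands bounding the gap containing $\mu_t$ touch, i.e.\ that gap closes. You should state it this way rather than ``$\mu_t$ equals a band edge'', since reaching a band edge is exactly what is to be ruled out and is not excluded merely by the gap staying open.

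The only real difference lies in how the collision is located at $z=\pm1$. The paper argues that band extremities are eigenvalues of $f(\pm1)$ and then passes to a limit of eigenvectors. You instead use the reality of $z_t=-b_k(\mathbf{v}_t)_1/\bigl(c_{k-1}(\mathbf{v}_t)_{k-1}\bigr)$, so that $\lvert z_t\rvert=1$ forces $z_t=\pm1$, and you evaluate directly at the crossing. This makes explicit a step the paper leaves implicit, namely that the limiting Floquet parameter is $\pm1$.
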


\begin{proof}
    Suppose for contradiction that there exists $\lambda_0 \in \sigma(\mathbf{B}_0)$ that merges into the spectral bands without the spectral gap closing. This merging must occur at a band extremity, i.e. at an eigenvalue $\lambda_{\mathrm{ess}}$ of $f(\pm 1)$.
    Let $f(\pm 1)\mathbf{w}=\lambda_{\mathrm{ess}}\mathbf{w}$ then since by assumption $f(\pm 1)$ is centrosymmetric, the eigenvector $\mathbf{w}\in \R^k$ must satisfy $\mathbf{w}_1 = \pm \mathbf{w}_k$. In addition, it must hold that both entries are not equal to $0$ as otherwise because of the structure of $f(z)$ and the eigenvectors are given by a two-term recurrence, it would have to hold that $\mathbf{w}\equiv \mathbf{0}.$
    Let us now suppose that $\lambda_0 \in \sigma(\mathbf{B}_0)$ is an eigenvalue in the bandgap, by Theorem \ref{cor: coburn eigenvector}, the last entry of the associated eigenvector must be $0$. Since the essential spectrum $\sigma_{\mathrm{ess}}\big(\mathbf{T}(f)\big)$ is always closed, the spectral gap given by $\R\setminus \sigma_{\mathrm{ess}}\big(\mathbf{T}(f)\big)$ is open.
    We now move to the limit $\lambda_0 \to \lambda_{\mathrm{ess}}$.
    By assumption that the spectral gap remains open and by Lemma \ref{lem: uniqueness edge induced interface} both $\lambda_{\mathrm{ess}}$ and $\lambda_0$ are simple. It is well known that the Floquet parameter $z$ is continuous in the eigenvalue $\lambda$, therefore $f\big(z(\lambda)\big)$ is a continuous transformation in $\lambda$. By Lemma \ref{lem: uniqueness edge induced interface} the eigenvalue $\lambda_0$ is simple, the eigenvector entries vary continuously. 
    Therefore, since $\mathbf{v}(\lambda_0)_k = 0$ for all $\lambda_0 \in \sigma(\mathbf{B}_0) \setminus \sigma_{\mathrm{ess}}\!\left(\mathbf{T}(f)\right)$, continuity forces $\mathbf{v}(\lambda_{\mathrm{ess}})_k = 0$.
    This contradicts the conclusion $\mathbf{w}_k \neq 0$ established above, completing the proof.
\end{proof}

\subsection{Edge Modes in the Complex Symmetric Setting}

An analogue of Lemma~\ref{lem: uniqueness edge induced interface} in the complex symmetric setting cannot be obtained by the same approach, for the following reasons. First, the spectral gap $\Gamma := \mathbb{C} \setminus \sigma_{\mathrm{ess}}$ is now a connected set of infinite two-dimensional Lebesgue measure, so the interlacing property no longer holds. Second, the argument used in the proof of Theorem~\ref{thm: inversion symmetry topological} exploited the fact that, in the real symmetric case, edge modes can only collide with eigenvalues at $f(\pm 1)$ and spectral bands can only touch at $z = \pm 1$. Neither of these constraints persists in the complex setting. Edge modes may instead collide at the eigenvalues of $f(e^{\pm \i \alpha})$ for any $\alpha \in [0, 2\pi]$, and spectral bands may likewise touch at any such $z$. This is illustrated in Figure~\ref{Fig: Trimer Gap closing}. Consequently, the symmetry argument centred on the distinguished points $f(\pm 1)$ is no longer applicable.
Nevertheless, for a dimer symbol with complex-valued entries, the following result holds.

\begin{proposition}\label{prop: complex dimer topology}
    Let $f(z)\in \C^{2\times 2}$ be persymmetric, then the edge mode $\lambda \in \sigma(\mathbf{B}_0) = a_1$ is topologically protected.
\end{proposition}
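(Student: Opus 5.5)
For the dimer I will not use the symmetry argument at the band edges $f(\pm 1)$. Instead I plan to compute everything explicitly, using the two-band computation of Example~\ref{Example: existence of edge modes}. Persymmetry of a $2\times 2$ symbol $f(z)=\begin{pmatrix} a_1 & b_1+b_2 z\\ b_1+b_2/z & a_2\end{pmatrix}$ means $f(z)=\mathbf{J}f(z)^\top\mathbf{J}$, and this forces $a_1=a_2=:a$. The local inversion symmetry is therefore encoded in the equal on-site entries. We have $\sigma(\mathbf{B}_0)=\{a\}$. By Theorem~\ref{cor: coburn eigenvector}(i), the associated Bloch vector is $\mathbf{v}=(1,0)^\top$ with $z(a)=-b_2/b_1$. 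By Theorem~\ref{cor: coburn eigenvector}(ii), $a$ is an edge eigenvalue exactly when $|b_1|<|b_2|$ and $a\notin\sigma_{\mathrm{det}}(f)$.

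Next I compute the essential spectrum via Corollary~\ref{cor: symmetric Toeplitz spectrum}:
\[
\det\bigl(f(e^{\i\alpha})-\lambda\bigr)=(a-\lambda)^2-b_1^2-b_2^2-2b_1b_2\cos\alpha .
\]
Hence
\[
\sigma_{\mathrm{det}}(f)=\bigl\{a\pm\sqrt{b_1^2+b_2^2+2b_1b_2\cos\alpha}:\alpha\in[0,2\pi]\bigr\}.
\]
This set is symmetric under $\lambda\mapsto 2a-\lambda$, so it consists of two curves that are point-reflections of each other about $a$. The edge eigenvalue $a$ lies on $\sigma_{\mathrm{det}}(f)$ if and only if $b_1^2+b_2^2+2b_1b_2\cos\alpha=0$ for some $\alpha$. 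That is the same as requiring $z=e^{\i\alpha}$ to be a root of $b_1b_2 z^2+(b_1^2+b_2^2)z+b_1b_2=0$, whose roots are $-b_1/b_2$ and $-b_2/b_1$. Such a root lies on $\mathbb{T}$ exactly when $|b_1|=|b_2|$. When that happens the two symmetric curves meet at the point $a$, so the gap containing $a$ closes.

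The protection argument then goes as follows. Take a continuous path of persymmetric symbols $f_t$ with $|b_1(0)|<|b_2(0)|$, and suppose the gap around $a_t$ never closes. Then $a_t\notin\sigma_{\mathrm{det}}(f_t)$ for every $t$. By the previous paragraph this gives $|b_1(t)|\neq|b_2(t)|$ for every $t$. Since $|b_1(t)|-|b_2(t)|$ is continuous and starts negative, it stays negative, so $|z(a_t)|=|b_2(t)/b_1(t)|>1$ throughout. Theorem~\ref{cor: coburn eigenvector}(ii) then keeps $a_t$ in $\sigma_{\mathrm{edge}}(\mathbf{T}(f_t))$ for every $t$. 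As a cross-check, the formula for $C_0(a_1)$ in Example~\ref{Example: existence of edge modes} is locally constant in the parameters while $|b_1|\neq|b_2|$, which is consistent with Theorem~\ref{thm: existence of edge modes}.

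The main obstacle is making precise what ``the band gap containing $\lambda$ does not close'' means in the complex plane, since $\mathbb{C}\setminus\sigma_{\mathrm{ess}}$ is connected. I would interpret it as the edge eigenvalue not colliding with $\sigma_{\mathrm{det}}$, or equivalently the two spectral curves not touching. The explicit point-reflection symmetry about $a$ shows that these two events coincide for the dimer, and that they happen exactly when $|b_1|=|b_2|$. I also need to handle the degenerate case $b_1=0$, where $z(a)=\infty$; it should be treated as trivially lying in the edge regime $|z|>1$.
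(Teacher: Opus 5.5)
Your proposal is correct and follows essentially the paper's argument. Both identify the Floquet multiplier $z=-b_2/b_1$ of the Bloch vector $(1,0)^\top$, show that the edge eigenvalue $a_1$ meets $\sigma_{\det}(f)$ exactly when $|b_1|=|b_2|$, and show that this coincides with the bands touching (a double eigenvalue) at $\lambda=a_1$. The differences are minor: you get the last step from the explicit formula $a\pm\sqrt{b_1^2+b_2^2+2b_1b_2\cos\alpha}$ and the factorisation $(b_1z+b_2)(b_2z+b_1)$ instead of the paper's $\partial_\lambda\det$ criterion, and your continuity argument for $|b_1(t)|-|b_2(t)|$ along a path states the paper's limiting argument more precisely.
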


\begin{proof}
    For a $2\times 2$ matrix, both the eigenvalues and eigenvectors have an  explicit closed form. Let us assume that $\lambda \in \sigma(\mathbf{B}_0)$ approaches the continuous spectrum, without the bandgap closing. By Theorem \ref{cor: coburn eigenvector}, for every eigenvector associated to $\lambda \in \sigma(\mathbf{B}_0)\setminus\sigma_{\mathrm{ess}}\big(\mathbf{T}(f)\big)$ must satisfy $\mathbf{v}=[1, 0]^\top$. By the assumption that the spectral band does not close, the eigenvalues of $f(\mathbb{T})$ stay simple, therefore the eigenvectors are continuous under continuous perturbations, by continuity it follows that $\mathbf{v}=[1,0]^\top$ also for $\lambda \in \sigma(\mathbf{B}_0)$. A direct computation yields that
    \begin{equation}
        \begin{pmatrix}
            a_1-\lambda & b_1 + b_2z \\
            b_1 + b_2/z & a_1-\lambda
        \end{pmatrix}\begin{pmatrix}
            1 \\
            0
        \end{pmatrix} = \mathbf{0}
    \end{equation}
    which holds as long as $z = - b_2/b_1$. Suppose now the edge mode is approaching the essential spectrum in the limit, then the Floquet parameter has to satisfy $|z|\to 1$, in other words $-b_2/b_1 = e^{\i\alpha}$.
    Hovewer, this is precisely the condition for bandgap closing. To see this, it is enough to understand that bandgap closing occurs when the symbol function has an eigenvalue of multiplicity $2$ for some $z\in\mathbb{T}$. In other words, the double eigenvalue has to satisfy
    \begin{equation}
        \operatorname{det}\big(f(z) -\tilde{\lambda}\big) = 0\quad \text{ and }\quad \frac{\d}{\d \lambda}\operatorname{det}\big(f(z) -\lambda\big)\Big|_{\lambda = \tilde{\lambda}} = 0.
    \end{equation}
 In particular, it holds by \eqref{eq: simplified essential spectrum} that
    \begin{equation}
        \frac{\d}{\d \lambda}\operatorname{det}\big(f(z) -\lambda\big) = \frac{\d}{\d \lambda}g(\lambda) = \frac{\d}{\d \lambda}\big(\lambda^2 - 2a_1\lambda + a_1^2 -b_1^2 -b_2^2\big) = 2\lambda -2a_1.
    \end{equation}
 Consequently, it must hold that $\tilde{\lambda} = a_1$, it remains to check the conditions when $\tilde{\lambda} = a_1$ is an eigenvalue of $f(z)$.
 \begin{equation}\label{eq: confluent root}
     \operatorname{det}\big(f(z) -\tilde{\lambda}\big) = b_1^2 + b_2^2 + b_1b_2(1/z + z) = 0
 \end{equation}
 It is easy to see that \eqref{eq: confluent root} holds precisely when $b_2 = -b_1e^{\i\alpha}$.
 To conclude, we have shown that the conditions for an edge mode vanishing into the essential spectrum are equivalent to the bandgap closing at this specific point. The edge mode is thereby topologically protected, which completes the proof.
\end{proof}
A particular property of complex symmetric dimer arrangements is the one-to-one correspondence with the edge mode vanishing and the bandgap closing, therefore it still makes sense to speak of topologically protected edge modes.
\begin{figure}[htb]
    \centering
    \subfloat[][Computation performed for $t = 0.1$.]%
    {\includegraphics[width=0.45\linewidth]{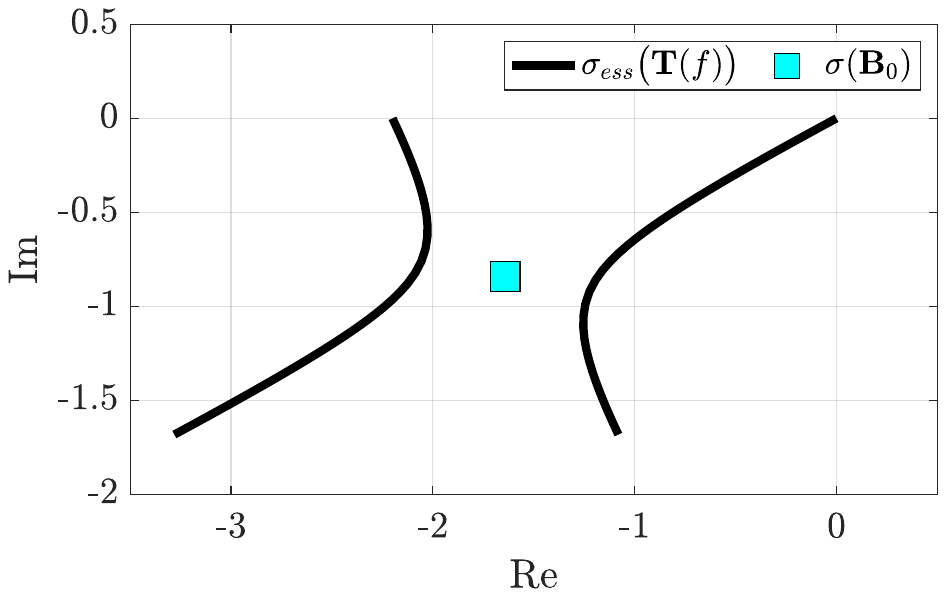}}\quad
    \subfloat[][Computation performed for $t=0$.]%
    {\includegraphics[width=0.45\linewidth]{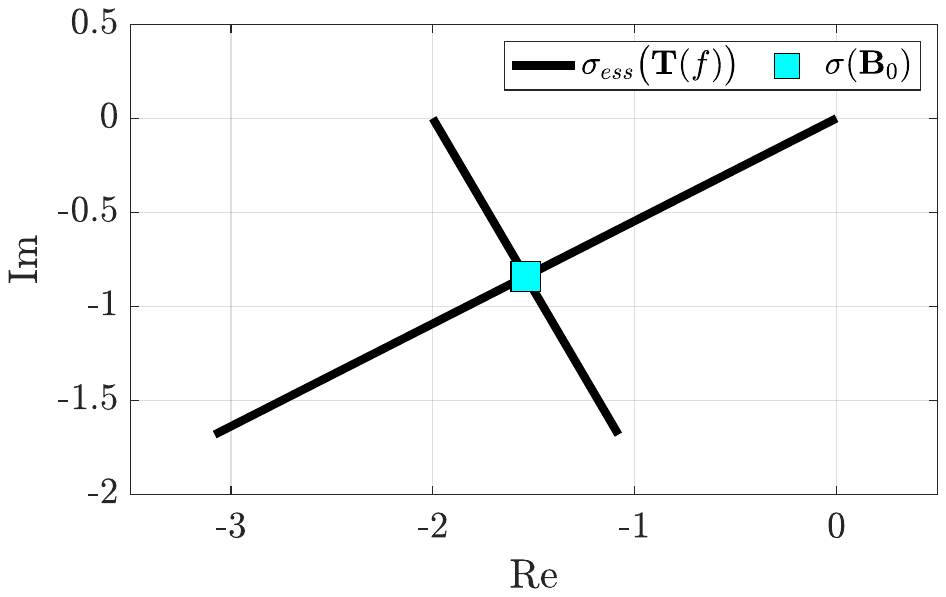}}
    \caption{Computation performed for $a_1 = a_2 = -2e^{\i}, b_1 =e^{\i}$ and $ b_2 = b_1e^{-\i} + t$.
    In a complex symmetric dimer arrangement, as asserted by Proposition \ref{prop: complex dimer topology} the gap closing condition is equivalent to the condition of the edge mode colliding with the essential spectrum.}
    \label{Fig: Complex Dimer Gap closing}
\end{figure}

We would like to emphasise that a similar result as in Proposition \ref{prop: complex dimer topology} does not hold for symbol functions $f\in \C^{k\times k}$ for $k\geq 3$. A counterexample is presented in Figure \ref{Fig: Trimer Gap closing}. Nevertheless, a generalisation of Proposition \ref{prop: complex dimer topology} to $k\geq 3$ is presented under much stronger assumptions in Proposition \ref{prop: top protected k geq 3}.

\begin{figure}[htb]
    \centering
    \subfloat[][Computation performed for $t = 1$.]%
    {\includegraphics[width=0.45\linewidth]{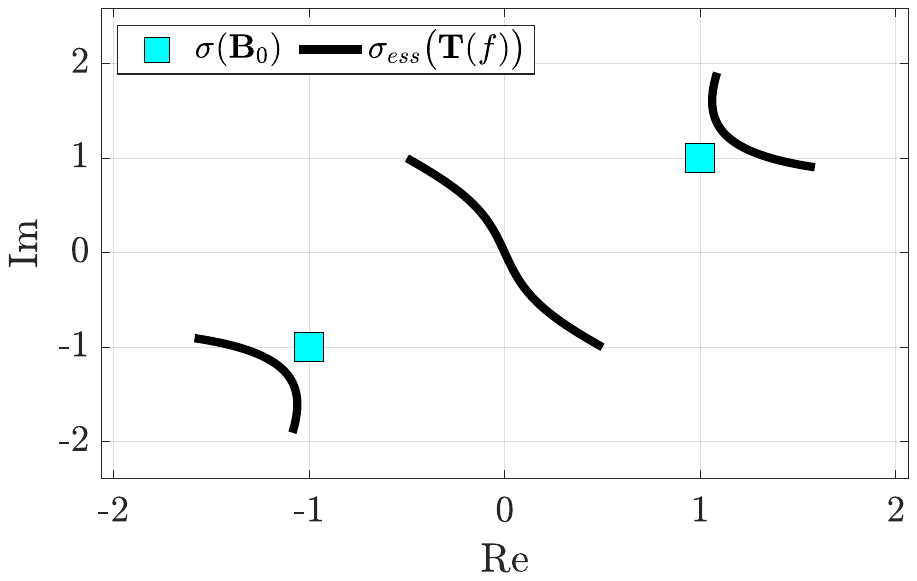}}\quad
    \subfloat[][Computation performed for $t=0$.]%
    {\includegraphics[width=0.45\linewidth]{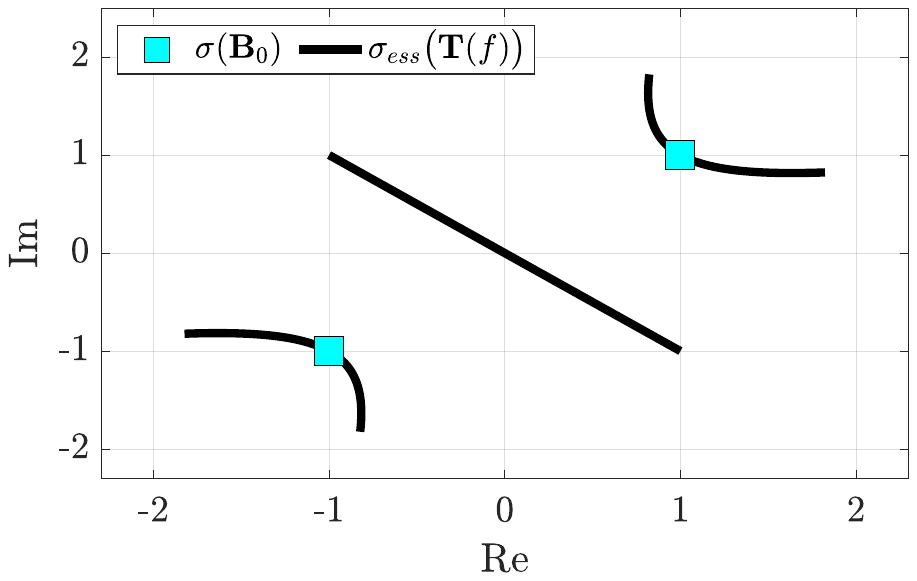}}
    \caption{Computation performed for $a_1 = a_2 = a_3 = 0, b_1 = b_2 = 1+\i$ and $b_3 = e^{\i \pi/2}b_2 + t$.
    Contrary to the real symmetric case as asserted by Theorem \ref{thm: inversion symmetry topological}, in the complex symmetric case, the edge modes $\sigma(\mathbf{B}_0)$ might move into the essential spectrum of $\mathbf{T}(f)$ without the spectral gap closing, despite the symbol function being persymmetric for any $t\in[0,1]$. In addition, the edge modes no longer collides with the essential spectrum at extremities of the bands.}
    \label{Fig: Trimer Gap closing}
\end{figure}

In the complex case, edge modes are topologically protected depending on how the spectral gap is closed.  
\begin{proposition}\label{prop: top protected k geq 3}
    Let $f(z) \in \C^{k\times k}$ for $k\geq3$ be persymmetric, and let $\lambda \in \sigma(\mathbf{B}_0)$ be an edge mode such that $\mathbf{B}_0\mathbf{v} = \lambda \mathbf{v}$ such that as the edge mode approaches the essential spectrum
    \begin{equation}\label{eq: assumption edge mode Floquet}
        \frac{b_k\mathbf{v}_1}{\mathbf{v}_{k-1}b_1} \to \pm 1,
    \end{equation}
    then the edge mode is topologically protected.
\end{proposition}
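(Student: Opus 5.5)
We mirror the proof of Proposition \ref{prop: complex dimer topology}: we show that the edge mode can only reach the essential spectrum at a point where the spectral gap closes.

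\emph{Step 1: the Floquet parameter is explicit.} Let $\lambda\in\sigma(\mathbf{B}_0)$ with $\mathbf{B}_0\mathbf{v}=\lambda\mathbf{v}$. By Theorem \ref{cor: coburn eigenvector}\,(i), the Bloch eigenvector is $\tilde{\mathbf{v}}=(\mathbf{v}^\top,0)^\top\in\C^k$, and $z(\lambda)$ satisfies $[f(z)-\lambda]\tilde{\mathbf{v}}=0$. The first $k-1$ rows reduce to $\mathbf{B}_0\mathbf{v}=\lambda\mathbf{v}$, because the $z$-dependent entry $b_kz$ sits in column $k$ and multiplies the zero entry. The last row reads
\[
b_k z^{-1}\mathbf{v}_1 + b_{k-1}\mathbf{v}_{k-1}=0, \qquad\text{so}\qquad z(\lambda)=-\frac{b_k\mathbf{v}_1}{b_{k-1}\mathbf{v}_{k-1}}.
\]
Persymmetry of $f$ gives $b_{k-1}=b_1$, so this is exactly the quantity in \eqref{eq: assumption edge mode Floquet}, up to sign. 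Hence assumption \eqref{eq: assumption edge mode Floquet} says that, as the edge mode approaches the essential spectrum, $z(\lambda)\to\mp1$. In other words, the edge mode can only collide with the bands at $z=\pm1$. This replaces the automatic fact from the real case, which fails in general in the complex case (Figure \ref{Fig: Trimer Gap closing}).

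\emph{Step 2: use centrosymmetry at $z=\pm1$.} At $z=\pm1$ the symbol $f(\pm1)$ is centrosymmetric, since it is persymmetric and symmetric. Consider the limiting eigenvalue $\lambda_{\mathrm{ess}}$ of $f(\pm1)$, and assume the gap stays open so that $\lambda_{\mathrm{ess}}$ is simple. We would invoke \cite{CANTONI1976275} to say that the associated eigenvector $\mathbf{w}$ is symmetric or skew-symmetric, i.e.\ $\mathbf{w}_1=\pm\mathbf{w}_k$. The two-term recurrence generated by the tridiagonal structure forces $\mathbf{w}_k\neq0$, since otherwise $\mathbf{w}\equiv\mathbf{0}$.

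\emph{Step 3: continuity argument.} The eigenvector $\tilde{\mathbf{v}}(\lambda_0)$ of $f(z(\lambda_0))$ always has last entry $0$. As $\lambda_0\to\lambda_{\mathrm{ess}}$ and $z(\lambda_0)\to\pm1$, simplicity of the eigenvalue makes the normalised eigenvector depend continuously on the parameters. In the limit this gives $\mathbf{w}_k=0$, contradicting Step 2. Therefore the edge mode cannot merge into the essential spectrum while the gap remains open, which is the definition of topological protection (Definition \ref{def: topologically protected}).

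\emph{Main obstacle.} The delicate step is justifying simplicity of $\lambda_{\mathrm{ess}}$ in the complex case. Lemma \ref{lem: uniqueness edge induced interface} (interlacing) is unavailable there. We would instead argue directly that an open gap means $\det(f(z)-\lambda)$ has a simple root in $\lambda$ at the collision point, since a double root is exactly the gap-closing condition, as in the dimer proof. Simplicity of the root then gives a one-dimensional eigenspace and continuity of eigenprojections. A secondary check is that $\mathbf{v}_{k-1}\neq0$ along the path, so that the formula for $z(\lambda)$ makes sense. This follows because $\mathbf{v}_{k-1}=0$ would, by the recurrence within $\mathbf{B}_0$, force $\mathbf{v}=0$, provided the off-diagonal entries $b_i$ are nonzero.
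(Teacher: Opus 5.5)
Your proposal is correct and follows the paper's proof. The paper also reads off $z=-b_k\mathbf{v}_1/(b_{k-1}\mathbf{v}_{k-1})$ from the last row, uses the hypothesis to force $z\to\pm1$ where $f$ is centrosymmetric, and then defers to the argument of Theorem~\ref{thm: inversion symmetry topological}, which you write out in Steps 2--3; your replacement of the interlacing-based simplicity (Lemma~\ref{lem: uniqueness edge induced interface}, which needs the Hermitian case) by the dimer-proof convention that gap closing means a double eigenvalue of $f(z)$ for some $z\in\mathbb{T}$ fills a point the paper's one-line deferral glosses over.
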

\begin{proof}
    Let $\lambda \in \sigma(\mathbf{B}_0)$, then by Theorem \ref{cor: coburn eigenvector} it follows that an eigenvector $\mathbf{v}$ of $f(z)$ has to satisfy in the last row the equation
    \begin{equation}
        \mathbf{v_1}b_kz^{-1} + \mathbf{v}_{k-1}b_{k-1} = 0.
    \end{equation}
    From this it follows that the Bloch parameter $z$ is given by 
    \begin{equation}
        z = - \frac{b_k\mathbf{v}_1}{\mathbf{v}_{k-1}b_{k-1}}.
    \end{equation}
    Since $f(z)$ is persymmetric, $f(z)$ is centrosymmetric if and only if $z = \pm 1$. By assumption \eqref{eq: assumption edge mode Floquet}, any continuous transformation of the symbol function that preserves persymmetry, the Floquet parameter $z\to \pm 1$. By the same argument as in the proof of Theorem \ref{thm: inversion symmetry topological}, the edge mode therefore only vanishes into the essential spectrum if the band gaps close, which completes the proof.
\end{proof}

Therefore, adding the additional requirement \eqref{eq: assumption edge mode Floquet} in Proposition \ref{prop: top protected k geq 3} would rule out counterexamples of the kind illustrated in Figure \ref{Fig: Trimer Gap closing}.

\subsection{Asymptotic spectra of Toeplitz matrices}\label{sec: open limit}
We will conclude our study of spectra by presenting a known result on the asymptotic spectra of finite but large tridiagonal $k$-Toeplitz matrices. To define a finite Toeplitz matrix from the semi-infinite Toeplitz operator, we introduce the projection
\begin{align}
    \vect{P}_n: \ell^2(\N, \mathbb{C}) &\to \ell^2(\N, \mathbb{C})\\
    (x_1, x_2, x_3, \dots) &\mapsto (x_1, \dots, x_n, 0, 0, \dots).
\end{align}
A tridiagonal $k$-Toeplitz matrix is derived from a tridiagonal $k$-Toeplitz operator by performing the following truncation,
\begin{equation}
    \vect{T}_{mk}(f) := \vect{P}_{mk} \vect{T}(f) \vect{P}_{mk}.
\end{equation}

We will venture onwards to characterise the \emph{open limit} of complex tridiagonal $k$-Toeplitz matrices. By the open limit, we understand a finite structure which is growing in size and we write
\begin{equation}
    \sigma_{\mathrm{open}}\big(\mathbf{T}(f)\big) :=  \lim_{n\to\infty} \sigma\big(\mathbf{T_n}(f)\big).
\end{equation}

The open limit for Block Toeplitz operators was first established by Widom \cite{WIDOM1974284}. Tridiagonal $k$-Toeplitz operators, being a particular case of block Toeplitz operators, satisfy the assumptions of Widom's theorem.

\begin{theorem}\label{thm: open limit k Toeplitz}
    Let $f \in \C^{k\times k}$ and let $\mathbf{T}_n(f)$ be a tridiagonal $k$-Toeplitz matrix, then it holds that
    \begin{equation}
        \liminf_{n\to\infty} \sigma\big(\mathbf{T_n}(f)\big) = \limsup_{n\to\infty} \sigma\big(\mathbf{T_n}(f)\big) = \lim_{n\to\infty} \sigma\big(\mathbf{T_n}(f)\big) = \Gamma \cup G_0,
    \end{equation}
    where $G_0$ is a set of finite cardinality defined in \eqref{def: G0} and the set $\Gamma$ is a collection of algebraic curves given by \eqref{def: Gamma set}.
\end{theorem}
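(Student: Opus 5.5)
The plan is to derive the statement from Widom's theorem on the spectra of large truncated block Toeplitz matrices \cite{WIDOM1974284}. For a block Laurent polynomial symbol $f$ whose determinant $\det(f(z)-\lambda)$ does not vanish identically, that theorem gives $\liminf = \limsup$ of $\sigma(\mathbf{T}_n(f))$. Widom identifies this common limit as $\Lambda \cup G_0$. Here $\Lambda$ is the set of $\lambda$ for which the two middle roots of $z^{p}\det(f(z)-\lambda)$, ordered by modulus, have equal modulus; $p$ is the pole order. $G_0$ is the set of $\lambda\notin\Lambda$ with $C_0(\lambda)=0$, where $C_0$ is the determinant of the Wiener--Hopf type product in \eqref{def: C0}, or equivalently the contour integral \eqref{eq: alternative C_0}. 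The proof therefore has three parts. We check that the tridiagonal $k$-Toeplitz matrices satisfy Widom's hypotheses. We identify $\Lambda$ with the set $\Gamma$ of \eqref{def: Gamma set}. We show that $G_0$ is finite.

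\emph{Hypotheses.} View $\mathbf{T}(f)$ in the block form \eqref{equ:tridiagonalktoeplitzop2}. The symbol $f(z)=\mathbf{A}_{-1}z^{-1}+\mathbf{A}_0+\mathbf{A}_1 z$ is a matrix Laurent polynomial with pole order $p=1$. The truncations $\mathbf{T}_{mk}(f)$ are exactly Widom's block truncations. By \eqref{equ:detexpansion1}, $z\det(f(z)-\lambda)$ is a quadratic in $z$ with leading coefficient $(-1)^{k+1}\prod_i c_i$ and constant coefficient $(-1)^{k+1}\prod_i b_i$. Assuming all $b_i,c_i\neq 0$, these coefficients are nonzero and independent of $\lambda$. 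Hence the determinant is not identically zero, and for every $\lambda$ there are exactly two roots $z_1(\lambda),z_2(\lambda)$, both nonzero and finite.

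\emph{Identification of $\Lambda$ and finiteness of $G_0$.} With only two roots, the two middle roots are $z_1$ and $z_2$, so $\Lambda=\Gamma$ verbatim. Lemma \ref{lemma: open limit essential spectrum} then shows that $\Gamma$ is the union of eigenvalue curves of $f(re^{\i\alpha})$, hence a finite union of algebraic curves. For $G_0$, Theorem \ref{thm: edge mode characcterisation} gives $G_0\subseteq\sigma(\mathbf{B}_0)\cup\sigma(\mathbf{B}_1)$, so $|G_0|\le 2k-2$. The theorem's stated proof uses Corollary \ref{cor: symmetric Toeplitz spectrum}, which assumes symmetry. I would avoid that restriction by using the diagonal similarity of \cite[Section 3]{ammari2024spectra}. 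It symmetrises $\mathbf{T}_n(f)$ up to the rescaling $z\mapsto z/r$, so it maps $\Gamma$ to the $\Gamma$ of the symmetrised symbol and leaves $\mathbf{B}_0,\mathbf{B}_1$ similar to themselves. Alternatively, the inclusion \eqref{eq: spectrum Toeplitz operator} can be used directly.

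\emph{Main obstacle.} The hardest step is making sure the definition of $C_0$ used here matches Widom's exactly. This covers the correct ordering of the factors, and the choice of the contour $\gamma$ enclosing $0$ and $z_1(\lambda)$ but not $z_2(\lambda)$. The equality between \eqref{def: C0} and \eqref{eq: alternative C_0} should be justified via the Wiener--Hopf factorisation of $f-\lambda$ relative to that contour. The degenerate cases where some $b_i$ or $c_i$ vanishes also need attention, since then the quadratic degenerates. I would treat these separately: the matrix becomes block triangular, so its spectrum is explicit, and the statement can be checked directly or excluded by hypothesis.
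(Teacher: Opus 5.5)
Your proposal is correct and is the paper's argument: the paper simply invokes Widom's theorem for block Toeplitz matrices. Your checks make explicit what the paper leaves implicit, namely the restriction to block truncations $n=mk$, $\Lambda=\Gamma$ from the quadratic \eqref{equ:detexpansion1}, and finiteness of $G_0$ via $G_0\subseteq\sigma(\mathbf{B}_0)\cup\sigma(\mathbf{B}_1)$ after rescaling or symmetrising so that the contour separates $z_1(\lambda)$ from $z_2(\lambda)$. One correction to your ``main obstacle'': \eqref{def: C0} and \eqref{eq: alternative C_0} are not equal as functions, so do not try to prove equality. For instance, at $\lambda=a_1$ in Example~\ref{Example: existence of edge modes} with $|b_1|>|b_2|$ the operator determinant equals $1$, while the contour formula gives $1/b_1^2$ up to sign. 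Only their zero sets coincide, and that is all $G_0$ needs. This follows because the zeroth Fourier block of $(f-\lambda)^{-1}$ is invertible iff $\mathbf{T}(f-\lambda)$ and $\mathbf{T}(\widetilde{f-\lambda})$ both are (Schur complement in the block-tridiagonal Laurent operator), and $\mathbf{T}(\widetilde{f-\lambda})$ is invertible iff $\mathbf{T}\big((f-\lambda)^{-1}\big)$ is.
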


Note that the sets $\Gamma$ and $G_0$ can be very efficiently computed in the tridiagonal $k$-Toeplitz case using Theorem \ref{thm: edge mode characcterisation} and Lemma \ref{lemma: open limit essential spectrum} respectively.

In particular, since tridiagonal $k$-Toeplitz matrices $\mathbf{T}_n(f) = \mathbf{D}\mathbf{T_n}(f_s)\mathbf{D}^{-1}$ are always similar to symmetric matrices $\mathbf{T}_n(f_s)$, it follows from Lemma \ref{lemma: open limit and essential spectrum} that asymptotically, up to possibly $2k-2$ discrete eigenvalues entailed in $G_0$, the eigenvalues cluster on the essential spectrum of $\mathbf{T}(f_s)$. The open limit presented in Theorem \ref{thm: open limit k Toeplitz} has the advantage over the open limit characterisation in \cite{ammari2024generalisedbrillouinzonenonreciprocal}, that it does not rely on the convergence of pseudospectra and therefore also applies to highly non-normal matrices, and matrices that exhibit exceptional points, as shown in \Cref{Fig: Complex Dimer Gap closing} (B).

\section{Spectra of tridiagonal k-Toeplitz Interface  operators}\label{sec: Twofold Toeplitz}
This section focuses on the spectral analysis of interface Toeplitz operators. We consider two tridiagonal $k$-Toeplitz operators, $\mathbf{T}_A$ and $\mathbf{T}_B$, and assume that they are joined at a common interface, so as to form an \emph{interface Toeplitz operator} of the following form
\begin{equation}\label{def: twofold Toeplitz operator}
    \mathbf{T}_{AB} := \scalebox{0.9}{$
        \begin{pNiceMatrix}[columns-width=0em]
        \Block[draw,fill=blue!31,rounded-corners]{3-3}{} & & & &  \\
        & \mathbf{T}_B& & &  \\
        & &  & q &  \\
        & & q &  \eta & s & & \\
        & & & s & \Block[draw,fill=red!31,rounded-corners]{3-3}{} & &  \\
        & & & & & \mathbf{T}_A &  \\
        & & & & & &  
        \end{pNiceMatrix} $}= \scalebox{0.7}{$
    \begin{pNiceMatrix}[columns-width=0pt]
        \Block[draw,fill=blue!31,rounded-corners]{4-4}{} \scalebox{0.6}{$\scriptstyle\ddots$} & \scalebox{0.6}{$\scriptstyle\ddots$} & {} & {} & {} \\
        \scalebox{0.6}{$\scriptstyle\ddots$} & a'_3 & b'_2 & {} & {} \\
        {} & b'_2 & a'_2 & b'_1 & {} \\
        {} & {} & b'_1 & a'_1 & q \\
        {} & {} & {} & q & \eta & s & {} & {} \\
        {} & {} & {} & {} & s & \Block[draw,fill=red!31,rounded-corners]{4-4}{} a_1 & b_1 & {} \\
        {} & {} & {} & {} & {} & b_1 & a_2 & b_2 \\
        {} & {} & {} & {} & {} & {} & b_2 & a_3 & \scalebox{0.6}{$\scriptstyle\ddots$} \\
        {} & {} & {} & {} & {} & {} & {} & \scalebox{0.6}{$\scriptstyle\ddots$} & \scalebox{0.6}{$\scriptstyle\ddots$}
    \end{pNiceMatrix}
$},
\end{equation}
for some coupling constants $\eta, s,  q \in \C$. We will start by giving a description of the essential spectrum of an interface Toeplitz operator as in \eqref{def: twofold Toeplitz operator}. The $k$-periodic structure of such operators is always specified relative to a canonically selected unit cell. The following theorem shows that the essential spectrum does not depend on which unit cell convention is chosen. We say that $\tilde{\mathbf{T}}_A$ is a \emph{cyclic shift} of $\mathbf{T}_A$ if the $k$-periodic entries of the Toeplitz operator are cyclically shifted, that is, $a_i \to a_{i+m}$, for some $m\in \Z$.
\begin{lemma}\label{lemma: spectrum unit cell convention}
    Let $\mathbf{T}_A$ be a tridiagonal $k$-Toeplitz operator and let $\tilde{\mathbf{T}}_A$ be a cyclic shift of $\mathbf{T}_A$, then
    \begin{equation}
        \sigma_{\mathrm{ess}}(\tilde{\mathbf{T}}_A) = \sigma_{\mathrm{ess}}(\mathbf{T}_A).
    \end{equation}
\end{lemma}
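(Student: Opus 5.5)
The plan is to reduce the statement to the symbol characterisation \eqref{eq: essential spectrum}, namely $\sigma_{\mathrm{ess}}(\mathbf{T}(f)) = \sigma_{\mathrm{det}}(f)$. It then suffices to show that the symbol $\tilde f$ of the shifted operator $\tilde{\mathbf{T}}_A$ satisfies $\det(\tilde f(z)-\lambda)=\det(f(z)-\lambda)$ for all $z\in\mathbb{T}$, or at least that the spectra of $\tilde f(z)$ and $f(z)$ agree as sets when $z$ ranges over $\mathbb{T}$.

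We proceed in three steps.

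\textbf{Step 1 (reduction to a unit shift).} A cyclic shift by $m$ is the $m$-fold composition of shifts by one, so it is enough to treat $a_i\to a_{i+1}$, $b_i\to b_{i+1}$, $c_i\to c_{i+1}$.

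\textbf{Step 2 (the bi-infinite picture).} Let $\mathbf{L}(f)$ be the bi-infinite $k$-periodic tridiagonal operator on $\ell^2(\Z)$ with the same entries. The shifted operator corresponds to re-indexing the same bi-infinite lattice, i.e.\ $\mathbf{L}(\tilde f)=S^{-1}\mathbf{L}(f)S$ with $S$ the unit shift on $\ell^2(\Z)$. Equivalently, at the level of symbols, $\tilde f(z)=D(z)^{-1}P f(z)P^{-1}D(z)$. Here $P$ is the cyclic permutation matrix and $D(z)=\operatorname{diag}(1,\dots,1,z)$ (or $z^{-1}$) records the index that wraps across the cell boundary.

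We verify this entrywise. The entries of $f(z)$ are the interior couplings $b_i,c_i$ for $i<k$, together with the corner entries $b_kz$ and $c_kz^{-1}$ (up to the paper's convention of which corner carries $z$). After conjugation by $P$, the coupling $b_k,c_k$ moves into the interior. Meanwhile $b_1,c_1$ moves to the corner, and the diagonal similarity $D(z)$ redistributes the factors $z^{\pm1}$ correctly. Since $D(z)$ is invertible for $z\neq0$, we obtain $\det(\tilde f(z)-\lambda)=\det(f(z)-\lambda)$ for every $z\in\mathbb{T}$. Therefore $\sigma_{\mathrm{det}}(\tilde f)=\sigma_{\mathrm{det}}(f)$.

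A cross-check is available from \eqref{equ:detexpansion1}. The coefficients $\prod_i b_i$ and $\prod_i c_i$ are manifestly invariant under cyclic shifts. It would then remain to see that $g(\lambda)$ is invariant, which follows from the similarity above, or alternatively from the fact that $g(\lambda)$ is the trace of the monodromy (transfer) matrix product, which is invariant under cyclic permutation of its factors.

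\textbf{Step 3 (conclusion).} Applying \eqref{eq: essential spectrum} to both $\mathbf{T}_A$ and $\tilde{\mathbf{T}}_A$ gives $\sigma_{\mathrm{ess}}(\tilde{\mathbf{T}}_A)=\sigma_{\mathrm{ess}}(\mathbf{T}_A)$.

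As an independent route, the semi-infinite $\tilde{\mathbf{T}}_A$ differs from $\mathbf{T}_A$ with its first row and column deleted only by a finite-rank perturbation, once the compressed operator is identified with $\mathbf{T}_A$ restricted to a cofinite subspace. Weyl's theorem then also yields equality of essential spectra.

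The main obstacle is the bookkeeping in Step 2. One has to fix the exact placement of $z$ versus $z^{-1}$ in the corner entries, and choose $D(z)$ so that the conjugation reproduces $\tilde f(z)$ exactly rather than $\tilde f(z^{-1})$. I would check this first for $k=2$ and then write out the general entrywise identity. If the $z\leftrightarrow z^{-1}$ convention does come out reversed, this is harmless: $\mathbb{T}$ is invariant under $z\mapsto z^{-1}$, so $\sigma_{\mathrm{det}}$ is unaffected.
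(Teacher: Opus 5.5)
Your proposal is correct and follows the paper's route: reduce to $\sigma_{\mathrm{ess}}=\sigma_{\mathrm{det}}$ via \eqref{eq: essential spectrum}, then use that $\det(f(z)-\lambda)$ is invariant under cyclic shifts. The paper only asserts this invariance, and your explicit similarity $\tilde f(z)=D(z)^{-1}Pf(z)P^{-1}D(z)$ with $D(z)=\operatorname{diag}(1,\dots,1,z)$ is exactly right entrywise, so it supplies the justification the paper leaves implicit.
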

\begin{proof}
    By \cite[Theorem 2.4.]{ammari2024spectra}, the essential spectrum of a $k$-Toeplitz operator is given by
    \begin{equation}
        \sigma_{\mathrm{ess}}\bigl(\mathbf{T}(f)\bigr) = \sigma_{\mathrm{det}}(f).
    \end{equation}
    The assertion then follows by the fact that the determinant is invariant under cyclic shifts.
\end{proof}
The essential spectrum of interface Toeplitz operators admits the following description.
\begin{theorem}\label{theorem: essential spectrum for twofold}
 Let $\mathbf{T}_{AB}$ be the interface Toeplitz operator defined in \eqref{def: twofold Toeplitz operator}, then the essential spectrum is given by
    \begin{equation}
        \sigma_{\mathrm{ess}}\bigl(\mathbf{T}_{AB}\bigr) = \sigma_{\mathrm{ess}}\bigl(\mathbf{T}_{A}\bigr) \cup \sigma_{\mathrm{ess}}\bigl(\mathbf{T}_{B}\bigr).
    \end{equation}
\end{theorem}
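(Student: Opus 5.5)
The plan is to reduce $\mathbf{T}_{AB}$ to a direct sum by a finite-rank perturbation and then use invariance of the essential spectrum. Split $\ell^2(\Z)$ (or the appropriate index set on which $\mathbf{T}_{AB}$ acts) into three parts: the left half-line carrying $\mathbf{T}_B$, the single interface site carrying $\eta$, and the right half-line carrying $\mathbf{T}_A$. Let $\mathbf{K}$ be the operator containing only the four coupling entries $q$ (twice) and $s$ (twice). Then
\begin{equation*}
    \mathbf{T}_{AB} - \mathbf{K} = \mathbf{T}_B \oplus (\eta) \oplus \mathbf{T}_A,
\end{equation*}
where $\mathbf{T}_B$ is understood as the semi-infinite operator acting on the left half-line. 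Since $\mathbf{K}$ has rank at most $4$ and is therefore compact, Weyl's theorem gives $\sigma_{\mathrm{ess}}(\mathbf{T}_{AB}) = \sigma_{\mathrm{ess}}\big(\mathbf{T}_B\oplus(\eta)\oplus\mathbf{T}_A\big)$. Here $\sigma_{\mathrm{ess}}$ is taken in the Fredholm sense, which is the sense used in \eqref{eq: essential spectrum} from \cite{ammari2024spectra}.

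Next, for a direct sum, $\lambda - (\mathbf{T}_B\oplus(\eta)\oplus\mathbf{T}_A)$ is Fredholm if and only if each summand is Fredholm. The one-dimensional block $(\eta-\lambda)$ is always Fredholm, so it contributes nothing. Hence the essential spectrum of the direct sum is $\sigma_{\mathrm{ess}}(\mathbf{T}_A)\cup\sigma_{\mathrm{ess}}(\mathbf{T}_B)$, which is the statement.

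One point needs care. The left block, read in the natural ordering of $\mathbf{T}_{AB}$, is a reflected copy of a Toeplitz operator: its indices run towards $-\infty$ and its entries appear in the order $a_1', b_1', a_2',\dots$ starting from the interface. Conjugating by the flip $n\mapsto -n$ turns it unitarily into a standard semi-infinite tridiagonal $k$-Toeplitz operator whose $k$-periodic entries may be cyclically shifted or reversed relative to the canonical unit cell. Unitary equivalence preserves the essential spectrum. Lemma \ref{lemma: spectrum unit cell convention} handles the cyclic shift, and reversal of the cell corresponds to $f(z)\mapsto \mathbf{J}f(z^{-1})\mathbf{J}$, which leaves $\det(f(\mathbb{T})-\lambda)$ invariant. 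So $\sigma_{\mathrm{ess}}$ of this block equals $\sigma_{\mathrm{det}}$ of the symbol of $\mathbf{T}_B$.

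I expect this bookkeeping of orientation and unit-cell convention on the $B$ side to be the only real obstacle; the rest is the standard compact-perturbation argument.
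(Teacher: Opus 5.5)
Your proposal is correct and follows essentially the same route as the paper: a compact (here explicitly finite-rank) perturbation that reduces $\mathbf{T}_{AB}$ to a direct sum, additivity of the Fredholm essential spectrum over direct sums, and invariance of $\sigma_{\mathrm{det}}$ under the choice and orientation of the unit cell. The only difference is bookkeeping: you split off the interface site as its own $1\times 1$ summand $(\eta)$, whereas the paper absorbs it into a shifted copy $\tilde{\mathbf{T}}_A$ and invokes Lemma~\ref{lemma: spectrum unit cell convention}; your version is slightly cleaner, because $\tilde{\mathbf{T}}_A$ is a cyclic shift of $\mathbf{T}_A$ only after a further finite-rank change of the entries $\eta$ and $s$.
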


\begin{proof}
    The operator $\mathbf{T}_{AB}$ defined in \eqref{def: twofold Toeplitz operator} is a compact perturbation of the block diagonal operator
    \begin{equation}
        \mathbf{T}_D = \begin{pmatrix}
            \mathbf{T}_{B} & 0 \\
            0 & \tilde{\mathbf{T}}_{A}
        \end{pmatrix} = \mathbf{T}_{B}\oplus \tilde{\mathbf{T}}_{A}.
    \end{equation}
    By the direct sum decomposition of the essential spectrum, it holds
    \begin{equation}
        \sigma_{\mathrm{ess}}\bigl(\mathbf{T}_B \oplus \tilde{\mathbf{T}}_A \bigr) = \sigma_{\mathrm{ess}}\bigl(\mathbf{T}_B \bigr) \cup \sigma_{\mathrm{ess}}\bigl( \tilde{\mathbf{T}}_A \bigr),
    \end{equation}
    and since the essential spectrum is unchanged under compact perturbations it holds that
    \begin{equation}
        \sigma_{\mathrm{ess}}\bigl(\mathbf{T}_{AB}\bigr) = \sigma_{\mathrm{ess}}\bigl(\mathbf{T}_D\bigr) = \sigma_{\mathrm{ess}}\bigl(\mathbf{T}_B \bigr) \cup \sigma_{\mathrm{ess}}\bigl( \tilde{\mathbf{T}}_A \bigr) = \sigma_{\mathrm{ess}}\bigl(\mathbf{T}_B \bigr) \cup \sigma_{\mathrm{ess}}\bigl( \mathbf{T}_A \bigr) ,
    \end{equation}
    where in the last step, we used the invariance of the essential spectrum under cyclic shifts as asserted by Lemma \ref{lemma: spectrum unit cell convention}, which completes the proof.
\end{proof}
The discrete spectrum of tridiagonal interface Toeplitz operators is more subtle to characterise. In the sequel, we therefore exploit the symmetries in the structure of the building blocks $\mathbf{T}_A$ and $\mathbf{T}_B$.

\subsection{Reflection-symmetric operators.}\label{sec: relfection symmetry}
In this section, we focus on interface Toeplitz operators under the additional assumption that their building blocks, $\mathbf{T}_A$ and $\mathbf{T}_B$, are reflection- (or mirror-) symmetric.
This is of particular interest, as it places the system in the same form as the celebrated SSH model~\cite{PhysRevLett.42.1698} under global inversion symmetry.
\begin{definition}[Interface eigenvalue]
We say that $\lambda\in\C$ is an \emph{interface eigenvalue} if it lies in the common spectral gap, that is $\lambda \in \sigma(\mathbf{T}_{AB}) \setminus \sigma_{\mathrm{ess}}(\mathbf{T}_{AB})$ and if the associated \emph{interface eigenmode} is decaying away from the interface into both bulks.
\end{definition}
Let us introduce the reflection operator,
\begin{align}\label{def: refelction operator}
    \mathbf{R}: \ell^2(\Z_{\geq 0}) &\to \ell^2(\Z_{\leq 0})\\
    (\vect{v}_0, \vect{v}_1, \dots) &\mapsto (..., \vect{v}_1, \vect{v}_0). \nonumber
\end{align}
Let $\mathbf{T}_A$ be a tridiagonal $k$-Toeplitz operator, then its mirrored counterpart is given by $\mathbf{T}_B = \mathbf{R} \mathbf{T}_A \mathbf{R}$ and we have $\mathbf{R}^2 = \Id$. The following result establishes that the eigenvectors inherit the same mirror symmetry.
\begin{lemma}\label{lemma: eigenvector reflection}
    Let $\lambda$ be an eigenvalue of $\mathbf{T}_A$ with eigenvector $\vect{v}$, then $\lambda$ is also an eigenvalue for $\mathbf{T}_B$ with eigenvector $\mathbf{R}\vect{v}$.
\end{lemma}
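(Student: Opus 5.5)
The argument is a direct computation built on the relation $\mathbf{T}_B = \mathbf{R}\mathbf{T}_A\mathbf{R}$ together with the involution property $\mathbf{R}^2 = \Id$. Since $\mathbf{R}$ maps $\ell^2(\Z_{\geq 0})$ to $\ell^2(\Z_{\leq 0})$, we first fix the convention that the same symbol $\mathbf{R}$ also denotes the reverse map $\ell^2(\Z_{\leq 0})\to\ell^2(\Z_{\geq 0})$. This is the sense in which $\mathbf{R}^2=\Id$ is asserted, and it makes $\mathbf{T}_B$ an operator on $\ell^2(\Z_{\leq 0})$.

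Let $\mathbf{T}_A\vect{v} = \lambda\vect{v}$ with $\vect{v}\neq \vect{0}$. Then we compute
\begin{equation*}
    \mathbf{T}_B(\mathbf{R}\vect{v}) = \mathbf{R}\mathbf{T}_A\mathbf{R}\mathbf{R}\vect{v} = \mathbf{R}\mathbf{T}_A\vect{v} = \lambda\mathbf{R}\vect{v}.
\end{equation*}
It remains to check that $\mathbf{R}\vect{v}$ is a genuine eigenvector. Because $\mathbf{R}$ only reindexes entries, it is an isometry, so $\|\mathbf{R}\vect{v}\|_{\ell^2} = \|\vect{v}\|_{\ell^2}$. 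Hence $\mathbf{R}\vect{v}\neq\vect{0}$ and $\mathbf{R}\vect{v}\in\ell^2(\Z_{\leq 0})$. It follows that $\lambda\in\sigma_p(\mathbf{T}_B)$ with eigenvector $\mathbf{R}\vect{v}$.

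As a side remark, reversing the roles of $A$ and $B$ and applying $\mathbf{R}$ again gives the converse inclusion. Therefore the point spectra of $\mathbf{T}_A$ and $\mathbf{T}_B$ coincide.

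There is no real obstacle here. The only point requiring care is the bookkeeping of domains, namely that $\mathbf{R}$ is read as a bijective isometry between the two half-line spaces. If the decay properties are needed later, we also note that an eigenvector of $\mathbf{T}_A$ decaying exponentially as the index tends to $+\infty$ is mapped by $\mathbf{R}$ to one decaying as the index tends to $-\infty$. Since $\mathbf{R}$ preserves the modulus of each entry, this is immediate.
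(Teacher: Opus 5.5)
Your proof is correct and is exactly the paper's argument: the one-line computation $\mathbf{T}_B(\mathbf{R}\vect{v}) = \mathbf{R}\mathbf{T}_A\mathbf{R}\mathbf{R}\vect{v} = \mathbf{R}\mathbf{T}_A\vect{v} = \lambda\mathbf{R}\vect{v}$, using $\mathbf{R}^2=\Id$. Your remarks on the domain convention for $\mathbf{R}$ and on $\mathbf{R}\vect{v}\neq\vect{0}$ (since $\mathbf{R}$ is an isometry) are harmless additions that the paper leaves implicit.
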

\begin{proof}
    A direct computation yields
    \begin{equation}
        \mathbf{T}_B \bigl(\mathbf{R}\vect{v}\bigr) = \mathbf{R} \mathbf{T}_A \mathbf{R} \bigl(\mathbf{R}\vect{v}\bigr) = \mathbf{R}\bigl(\mathbf{T}_A \vect{v}\bigr) = \mathbf{R}\bigl(\lambda \vect{v}\bigr) = \lambda \bigl(\mathbf{R}\vect{v}\bigr),
    \end{equation}
    which completes the proof.
\end{proof}
This symmetry is inherited by the eigenvectors of the interface operator. We have the following categories.

\begin{definition}[Monopole and Dipole modes]\label{def: monopole dipole}
    We say that an eigenmode is a \emph{monopole mode} if it has even symmetry
    \begin{equation}
        \vect{w}_i = \vect{w}_{-i}.
    \end{equation}
    A \emph{dipole mode} , on the other hand, has odd symmetry, that is
    \begin{equation}
        \vect{w}_i = -\vect{w}_{-i}.
    \end{equation}
\end{definition}

We have the following result on the parity of the eigenmodes of a reflection-symmetric operator $\mathbf{T}_{AB}$.
\begin{lemma}\label{lemma: parity}
    Suppose that $\mathbf{T}_{AB}$ satisfies \eqref{eq: mirror symmetry}, then any eigenmode $\vect{w}$ corresponding to a simple eigenvalue $\lambda$ is either an monopole or dipole mode.
\end{lemma}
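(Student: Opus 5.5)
The plan is the standard symmetry argument. The operator $\mathbf{T}_{AB}$ commutes with the global reflection about the interface site, and a simple eigenvalue has a one-dimensional eigenspace. That eigenspace must therefore be invariant under the reflection, so the eigenvector is an eigenvector of the reflection, whose eigenvalues are $\pm1$.

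\textbf{Setting up the reflection.} First I make precise the global reflection $\mathbf{S}$ on $\ell^2(\Z)$, with the interface site $\eta$ indexed by $0$: $(\mathbf{S}\vect{w})_i = \vect{w}_{-i}$. This is the two-sided version of $\mathbf{R}$ from \eqref{def: refelction operator}. Clearly $\mathbf{S}^2=\Id$. The mirror-symmetry assumption \eqref{eq: mirror symmetry} means $\mathbf{T}_B = \mathbf{R}\mathbf{T}_A\mathbf{R}$ together with $q=s$. I will check entrywise that this is equivalent to $\mathbf{S}\mathbf{T}_{AB}\mathbf{S} = \mathbf{T}_{AB}$.
\begin{itemize}
\item In the bulk blocks the relation is exactly the reflection relation; cf.\ Lemma~\ref{lemma: eigenvector reflection}.
\item At the interface, the $3\times3$ block with rows/columns $-1,0,1$ has entries $a_1', q, \eta, s, a_1$. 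It is invariant under $i\mapsto -i$ precisely because $a_1'=a_1$ and $q=s$.
\end{itemize}
The consequence is that $\mathbf{T}_{AB}\mathbf{S}=\mathbf{S}\mathbf{T}_{AB}$.

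\textbf{Applying the commutation.} Let $\mathbf{T}_{AB}\vect{w}=\lambda\vect{w}$ with $\lambda$ simple. Then
\[
\mathbf{T}_{AB}(\mathbf{S}\vect{w}) = \mathbf{S}\mathbf{T}_{AB}\vect{w} = \lambda\,\mathbf{S}\vect{w}.
\]
Also $\mathbf{S}\vect{w}\neq 0$ and $\mathbf{S}\vect{w}\in\ell^2$, since $\mathbf{S}$ is unitary. Simplicity of $\lambda$ gives $\mathbf{S}\vect{w}=c\vect{w}$ for some $c\in\C$. Applying $\mathbf{S}$ again yields $\vect{w}=\mathbf{S}^2\vect{w}=c^2\vect{w}$, so $c^2=1$ and $c=\pm1$.
\begin{itemize}
\item $c=1$ gives $\vect{w}_i=\vect{w}_{-i}$, i.e.\ a monopole mode.
\item $c=-1$ gives $\vect{w}_i=-\vect{w}_{-i}$, i.e.\ a dipole mode (Definition~\ref{def: monopole dipole}).
\end{itemize}

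\textbf{Main obstacle.} The argument is short; the only delicate point is bookkeeping. I must make sure that the indexing of $\mathbf{T}_{AB}$ in \eqref{def: twofold Toeplitz operator} actually places the symmetry centre at the $\eta$ site. I must also confirm that \eqref{eq: mirror symmetry} includes the coupling condition $q=s$. Without $q=s$ the commutation fails at the interface, and the lemma is false.

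A secondary remark concerns the meaning of ``simple'' for non-Hermitian complex symmetric operators. I only need the geometric eigenspace to be one-dimensional, which algebraic simplicity implies. So no Hermiticity is required, and the argument is purely algebraic.
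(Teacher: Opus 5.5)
Your proposal is correct and follows the paper's proof: $\mathbf{T}_{AB}$ commutes with the reflection about the interface site, simplicity forces $\mathbf{S}\vect{w}=c\vect{w}$, and $\mathbf{S}^2=\Id$ gives $c=\pm1$. Your version is slightly cleaner than the paper's, since you use a genuine two-sided reflection on $\ell^2(\Z)$ and take $c\in\C$ rather than asserting $c\in\mathbb{R}$ a priori. Note also that \eqref{eq: mirror symmetry} is literally the identity $\mathbf{S}\mathbf{T}_{AB}\mathbf{S}=\mathbf{T}_{AB}$, so your entrywise check is immediate.
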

\begin{proof}
        Let $\mathbf{R}$ be the reflection operator defined in \eqref{def: refelction operator}, then by the symmetry assumption on $\mathbf{T}_{AB}$ it holds that $\mathbf{R}\mathbf{T}_{AB} = \mathbf{T}_{AB} \mathbf{R}$. Let $\lambda \in \sigma\bigl(\mathbf{T}_{AB}\bigr) \setminus \sigma_{\mathrm{ess}}\bigl(\mathbf{T}_{AB} \bigr)$, then $\mathbf{T}_{AB}\vect{v} = \lambda \vect{v}$. 
        Applying $\mathbf{R}$ to both sides yields,
        \begin{equation}
            \mathbf{T}_{AB}\bigl(\mathbf{R}\vect{v}\bigr) = \mathbf{R}\bigl(\mathbf{T}_{AB}\vect{v}\bigr) = \mathbf{R}(\lambda \vect{v}) = \lambda \bigl(\mathbf{R}\vect{v}\bigr).
        \end{equation}
 Thus, $\mathbf{R}\vect{v}$ is also an eigenvector of $\mathbf{T}_{AB}$ with the same eigenvalue. Since $\lambda$ is simple, it must hold that the two eigenvectors are linearly dependent, i.e $\mathbf{R}\vect{v} = c\vect{v}$ for some $c \in \R$. Since  $\mathbf{R}^2 = \Id$ it follows that $\mathbf{R}\mathbf{R}\vect{v} = c^2 \vect{v} = v$, so that $c^2 = 1$, which implies $ c = \pm 1$.
\end{proof}

Let us now examine the interface Toeplitz operator from \eqref{def: twofold Toeplitz operator}, which is constructed from two reflection-symmetric Toeplitz operators. It is easy to verify that this type of operator, which is mirror-symmetric with respect to the interface, satisfies the relation
\begin{equation}\label{eq: mirror symmetry}
    \bigl(\mathbf{T}_{AB}\bigr)_{i,j} = \bigl(\mathbf{T}_{AB}\bigr)_{-i,-j}.
\end{equation}In that case, it follows directly from Theorem \ref{theorem: essential spectrum for twofold} that the essential spectrum is given by,
\begin{equation}
    \sigma_{\mathrm{ess}}\bigl(\mathbf{T}_{AB}\bigr) = \sigma_{\mathrm{ess}}\bigl(\mathbf{T}_A\bigr).
\end{equation}
Characterising the point spectrum of $\mathbf{T}_{AB}$ is more subtle, as it can originate from different mechanisms, such as edge-induced interfaces or eigenvalue-matched interface modes. 

\subsection{Localised modes for interface Toeplitz operators}
In this section, we demonstrate that interface Toeplitz operators admit two distinct mechanisms for supporting localised interface modes. The first arises naturally from composition. If a Toeplitz matrix $\mathbf{T}_A$ supports edge modes, then the composite interface Toeplitz matrix $\mathbf{T}_{AB}$ will automatically inherit an interface mode. The second mechanism is more intricate, which arises when an eigenvalue is tuned to match a prescribed eigenvector structure.
We proceed to discuss both scenarios.

\begin{theorem}\label{thm: spectral inclusion}
    Let $\mathbf{T}_{AB}$ be reflection-symmetric as in \eqref{eq: mirror symmetry} and let $\mathbf{T}_A$ be as in \eqref{def: twofold Toeplitz operator}, then the spectral inclusion 
    \begin{equation}
        \sigma\bigl(\mathbf{T}_A\bigr) \subseteq \sigma\bigl(\mathbf{T}_{AB}\bigr)
    \end{equation}
    holds. Moreover, if $\vect{v}$ is an eigenvector of $\mathbf{T}_A$, such that $\mathbf{T}_A\vect{v} = \lambda \vect{v}$ and $\mathbf{v}_1\neq 0$ , then there exists an eigenvector $\mathbf{w}$, such that $\mathbf{T}_{BA}\vect{w} = \lambda \vect{w}$, where the eigenvector is explicitly given by
    \begin{equation}\label{eq: eqigenvector twwofold Toeplitz}
        \vect{w}_i = \begin{cases}
            -\vect{v}_{|i|}, & i < 0,\\
            0, &i = 0,\\
            \vect{v}_i, & i > 0.
        \end{cases}
    \end{equation}
\end{theorem}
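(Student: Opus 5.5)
The plan is to prove the explicit eigenvector statement first, by a direct row-by-row verification. The spectral inclusion then follows by splitting $\sigma(\mathbf{T}_A)$ into its essential part and its eigenvalues. I read $\mathbf{T}_{BA}$ in the statement as $\mathbf{T}_{AB}$.

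\emph{Indexing.} Index $\mathbf{T}_{AB}$ by $\Z$, with the coupling entry $\eta$ at position $(0,0)$. Then row $0$ reads $(q,\eta,s)$ in columns $-1,0,1$, and the block $\mathbf{T}_A$ occupies the indices $i\geq 1$, so that $(\mathbf{T}_{AB})_{1,1}=a_1$. The reflection symmetry \eqref{eq: mirror symmetry} applied to the entries $(0,-1)$ and $(0,1)$ gives $q=s$. It also gives $\mathbf{T}_B=\mathbf{R}\mathbf{T}_A\mathbf{R}$, i.e.\ $(\mathbf{T}_{AB})_{-i,-j}=(\mathbf{T}_A)_{i,j}$ for $i,j\geq 1$.

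\emph{Verification of $\mathbf{T}_{AB}\mathbf{w}=\lambda\mathbf{w}$.} Define $\mathbf{w}$ by \eqref{eq: eqigenvector twwofold Toeplitz}. I check $(\mathbf{T}_{AB}\mathbf{w})_i=\lambda\mathbf{w}_i$ in four cases.
\begin{enumerate}[(a)]
\item For $i\geq 2$, row $i$ only involves $\mathbf{w}_{i-1},\mathbf{w}_i,\mathbf{w}_{i+1}$, which all lie in the $A$-block. The row therefore coincides with row $i$ of $\mathbf{T}_A\mathbf{v}=\lambda\mathbf{v}$.
\item For $i=1$, we get $s\mathbf{w}_0+a_1\mathbf{v}_1+b_1\mathbf{v}_2=(\mathbf{T}_A\mathbf{v})_1=\lambda\mathbf{v}_1$, because $\mathbf{w}_0=0$.
\item For $i=0$, we get $q\mathbf{w}_{-1}+\eta\mathbf{w}_0+s\mathbf{w}_1=(s-q)\mathbf{v}_1=0=\lambda\mathbf{w}_0$. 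This uses $q=s$ and is the only place where the interface coupling enters.
\item For $i\leq -1$, the mirror symmetry gives $(\mathbf{T}_{AB}\mathbf{w})_{-i}=\sum_j(\mathbf{T}_{AB})_{-i,-j}\mathbf{w}_{-j}=-(\mathbf{T}_{AB}\mathbf{w})_{i}$ for the odd vector $\mathbf{w}$. Together with the positive-index rows, this yields $\lambda\mathbf{w}_{-i}$ (equivalently, Lemma \ref{lemma: eigenvector reflection} applied to the $B$-block).
\end{enumerate}

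\emph{Nontriviality and localisation.} Since $\mathbf{v}\in\ell^2(\N)$, we have $\|\mathbf{w}\|^2=2\|\mathbf{v}\|^2<\infty$, so $\mathbf{w}\neq 0$ is a genuine eigenvector. It is a dipole mode in the sense of Definition \ref{def: monopole dipole}. If $\lambda$ is an edge eigenvalue, then $\mathbf{v}$ is the quasiperiodic extension \eqref{eq: quas extension} with $|z(\lambda)|>1$. In that case $\mathbf{w}$ decays exponentially into both bulks and is an interface mode. The hypothesis $\mathbf{v}_1\neq 0$ is not needed for the verification itself; it only ensures that the construction is non-degenerate at the interface.

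\emph{Spectral inclusion.} By Theorem \ref{cor: coburn eigenvector}~\textit{(iii)}, $\sigma(\mathbf{T}_A)=\sigma_{\mathrm{det}}(f)\cup\sigma_{\mathrm{edge}}(\mathbf{T}_A)$.
\begin{enumerate}[(a)]
\item The first set equals $\sigma_{\mathrm{ess}}(\mathbf{T}_A)$ by \eqref{eq: essential spectrum}. By Theorem \ref{theorem: essential spectrum for twofold}, $\sigma_{\mathrm{ess}}(\mathbf{T}_A)\subseteq\sigma_{\mathrm{ess}}(\mathbf{T}_{AB})\subseteq\sigma(\mathbf{T}_{AB})$.
\item Every point of the second set is an $\ell^2$-eigenvalue of $\mathbf{T}_A$. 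The construction above turns it into an eigenvalue of $\mathbf{T}_{AB}$.
\end{enumerate}

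The main obstacle is not the algebra but the reduction in this last step. One must know that every non-essential point of $\sigma(\mathbf{T}_A)$ is an actual eigenvalue, and not merely a point where $\mathbf{T}_A-\lambda$ fails to be surjective. I will rely on Theorem \ref{cor: coburn eigenvector}~\textit{(iii)} for this. If a further justification were needed, I would use Coburn's lemma (Theorem \ref{thm: coburns lemma tridiagonal}): off $\sigma_{\mathrm{det}}$ the operator $\mathbf{T}_A-\lambda$ is Fredholm of index zero, since $\sigma_{\mathrm{wind}}=\emptyset$ in the symmetric case, so non-invertibility forces a nontrivial kernel.
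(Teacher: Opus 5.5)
Your proposal is correct and follows essentially the same route as the paper: a direct row-by-row check of the odd ansatz, in which the interface row cancels because the mirror symmetry forces $q=s$ and $\mathbf{w}_0=0$. The paper instead starts from the two-parameter ansatz $(a\mathbf{R}\mathbf{v},t,\mathbf{v})$ and reads off $t=0$, $a=-1$ from the residual; its proof only constructs eigenvectors, so your separate treatment of the essential part of $\sigma(\mathbf{T}_A)$ via Theorem~\ref{theorem: essential spectrum for twofold}, and your index-zero Fredholm argument that every non-essential spectral point is a genuine eigenvalue, fill in steps the paper leaves implicit.
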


\begin{proof}
    By Lemma \ref{lemma: eigenvector reflection} we have $\mathbf{v}_B = \mathbf{R}\mathbf{v}_A$ and let us make the following eigenvector Ansatz,
     
    \begin{equation}\label{eq:structuredtwofoldToeplitz} 
    \scalebox{0.8}{$
         \left[
        \begin{pNiceMatrix}[columns-width=0em]
        \Block[draw,fill=blue!31,rounded-corners]{3-3}{} & & & &  \\
        & \mathbf{T}_B& & &  \\
        & &  & q &  \\
        & & q &  \eta & q & & \\
        & & & q & \Block[draw,fill=red!31,rounded-corners]{3-3}{} & &  \\
        & & & & & \mathbf{T}_A &  \\
        & & & & & &  
        \end{pNiceMatrix}  - \lambda \mathrm{Id} \right] $}
        \scalebox{0.7}{$
        \begin{pNiceMatrix}[columns-width=0em]
        a{\color{blue!80}\begin{pmatrix}
            \vdots \\
            \vect{v}_2 \\
            \vect{v}_1
        \end{pmatrix}}\\
        t\\
        {\color{red!80}\begin{pmatrix}
            \vect{v}_1 \\
            \vect{v}_2\\
            \vdots
        \end{pmatrix}}
        \end{pNiceMatrix}$} = 
        \scalebox{0.7}{$\begin{pmatrix}
            \vdots \\
            0 \\
            qt \\
            aq\vect{v}_1 + (\eta-\lambda)t + q\vect{v}_1\\
            qt \\
            0 \\
            \vdots
        \end{pmatrix}$}.
    \end{equation}
    
 Consequently, it must hold that $t = 0$, then the residual becomes $\vect{v}_1(1+a)q$, which vanishes if $a = -1$, which proves the structure of the eigenvector $\vect{w}$ given in \eqref{eq: eqigenvector twwofold Toeplitz}.
\end{proof}

Observe that the above proof applies equally to finite matrices and infinite operators. In particular, to obtain a localised interface mode for the  tridiagonal interface Toeplitz matrix, it suffices to identify a Toeplitz matrix $\mathbf{T}_A$ that supports an edge mode.

In certain situations, the interface Toeplitz operator possesses eigenvalues within the bandgap that do not belong to $\sigma(\mathbf{B}_0)$. Such eigenvalues must satisfy a discrete counterpart of an impedance-matching condition at the interface, as characterised by the following result.

\begin{theorem}\label{thm: refexion symmetric impedance matching}
    Suppose $\mathbf{T}_{AB}$ is as in \eqref{eq: mirror symmetry}, let $f$ be the symbol function of $\mathbf{T}_A$ and let $\mathbf{v}$, be such that
    \begin{equation}
        \Big[f\big(z_1(\lambda)\big)-\lambda\Id\Big]\vect{v} = 0
    \end{equation}
    and assume that $\mathbf{v}_k \neq 0$, $\lambda\not\in\sigma_{\mathrm{ess}}(\mathbf{T}_{AB})$ and let $b_1$ be the first lower diagonal entry of $\mathbf{T}_A$. Then if $\lambda$ satisfies,
    \begin{equation}\label{eq: interface frequency}
        \lambda = \frac{2q^2z_1(\lambda)\mathbf{v}_1}{b_1\mathbf{v}_k} + \eta,
    \end{equation} 
    $\lambda$ is an interface eigenvalue of $\mathbf{T}_{AB}$, in which case $\mathbf{T}_{AB}\vect{w} = \lambda \vect{w}$, where
    \begin{equation}\label{eq: impedance matched interface}
        \mathbf{w}_i = \begin{cases}
            \left(\mathbf{R}\vect{u}\right)_{|i|}, & i < 0 \\
            z_1^{-1}\vect{v}_k, & i = 0\\
            \vect{u}_i, & i > 0
        \end{cases}
    \end{equation}
    where $\vect{u} = \big(z_1^0\vect{v}, z_1^1\vect{v}, z_1^2\vect{v}, \dots \big)$.
\end{theorem}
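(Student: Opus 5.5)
This is a direct verification with a monopole ansatz, in the spirit of the proof of Theorem \ref{thm: spectral inclusion}. The vector $\mathbf{w}$ in \eqref{eq: impedance matched interface} is even, so it is a monopole mode in the sense of Definition \ref{def: monopole dipole}. It is built from the quasiperiodic Bloch extension $\mathbf{u}$ of $\mathbf{v}$ on the $A$ side, its mirror image $\mathbf{R}\mathbf{u}$ on the $B$ side, and a single interface value $t := z_1^{-1}\mathbf{v}_k$ at index $0$. Since $\mathbf{T}_{AB}$ is mirror symmetric by \eqref{eq: mirror symmetry}, Lemma \ref{lemma: eigenvector reflection} says it suffices to check $(\mathbf{T}_{AB}-\lambda\Id)\mathbf{w}=0$ in the rows $i\geq 0$; the rows $i<0$ then follow by applying $\mathbf{R}$. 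Mirror symmetry also forces $s=q$.

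The first step is the bulk rows of $\mathbf{T}_A$, that is all rows $i\geq 1$ except the first row of the first unit cell. In the tridiagonal $k$-Toeplitz structure, each row involves only entries inside one cell or entries of neighbouring cells. The neighbouring entries are related by a factor $z_1^{\pm 1}$, so these rows reduce to the rows of $[f(z_1)-\lambda\Id]\mathbf{v}=0$. This is exactly the computation behind \eqref{eq: quas extension}. I will fix the orientation of the quasiperiodic factor so that $\mathbf{u}$ decays; this needs $|z_1|$ on the correct side of $1$, which follows from $\lambda\notin\sigma_{\mathrm{ess}}$ and the labelling of $z_1$ as the root of smaller or larger modulus. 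This gives $\mathbf{w}\in\ell^2$, decaying into both bulks.

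The second step is the first row of $\mathbf{T}_A$. In the bulk, this row reads $b_k z_1^{-1}\mathbf{v}_k + (a_1-\lambda)\mathbf{v}_1 + b_1\mathbf{v}_2 = 0$, where $b_k z_1^{-1}\mathbf{v}_k$ is the contribution of the previous (virtual) cell. In $\mathbf{T}_{AB}$ this term is replaced by $q\,w_0 = q t$. Choosing the interface value $t$ so that $qt$ reproduces the missing term makes this row vanish. Matching the constants here is where I would adjust the normalisation of $t$, since the statement writes $w_0=z_1^{-1}\mathbf{v}_k$ and the coupling to compare with may be $q$ against $b_k$ or $b_1$.

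The third step, which I expect to be the main obstacle, is the interface row $i=0$. It reads $q\,w_{-1} + (\eta-\lambda)w_0 + q\,w_1 = 2q\mathbf{v}_1 + (\eta-\lambda)t = 0$. Substituting $t$ from the previous step and solving for $\lambda$ should give exactly \eqref{eq: interface frequency}, $\lambda = \eta + 2q^2 z_1\mathbf{v}_1/(b_1\mathbf{v}_k)$. The hypothesis $\mathbf{v}_k\neq 0$ is used here, since it makes $t\neq 0$ and the division legitimate. The difficulty is bookkeeping: the factors $z_1^{\pm1}$ and the choice of $b_1$ versus $b_k$ must be reconciled consistently across steps two and three so that the formula comes out exactly as stated. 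I would first write out the $k=2$ case explicitly to fix these conventions, and then argue generally.

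Finally, $\lambda\notin\sigma_{\mathrm{ess}}(\mathbf{T}_{AB})$ holds by assumption, and $\mathbf{w}$ is a nonzero $\ell^2$ vector decaying away from the interface. Hence $\lambda$ is an interface eigenvalue.
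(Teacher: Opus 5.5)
Your proposal is essentially the paper's proof. Both use a monopole ansatz built from the Bloch extension $\mathbf{u}$: the bulk rows are satisfied automatically, the first row of $\mathbf{T}_A$ fixes the interface value $t$, and the interface row then yields $\lambda$. The only difference is cosmetic: the paper keeps a free amplitude $a$ on the $B$ side and derives $a=1$ from the row at index $-1$, while you impose evenness and use mirror symmetry.

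When you carry out the deferred bookkeeping, trust your second step. The replaced term is the cross-cell coupling $b_k z_1^{-1}\mathbf{v}_k$, which gives $t=b_k z_1^{-1}\mathbf{v}_k/q$ and $\lambda=\eta+2q^2z_1\mathbf{v}_1/(b_k\mathbf{v}_k)$; the $k=2$ case confirms this. The paper gets $b_1$ only by asserting $(\mathbf{L}_A)_{0,-1}=b_1$, and its own proof produces $t=z_1^{-1}\mathbf{v}_k b_1/q$ rather than the stated $\mathbf{w}_0$. So do not try to force the result to come out exactly as stated: the $\lambda$ formula is exact only when $b_1=b_k$, and the stated $\mathbf{w}_0$ only when $q=b_k$.
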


\begin{proof}
    Let $\mathbf{u}$, be an eigenvector of the doubly infinite Laurent operator to the eigenvalue $\lambda$, such that $\mathbf{L}_{A}\mathbf{u} = \lambda \mathbf{u}$. Then the eigenvector is constructed as follows,
    \begin{equation}
        \mathbf{u} = \big(\dots, z_1^{-2}\vect{v}, z_1^{-1}\vect{v}, z_1^0\vect{v}, z_1^1\vect{v}, z_1^2\vect{v}, \dots \big)
    \end{equation}
    where $|z_1|< 1$ is the root of the smallest magnitude of $\operatorname{det}\big(f(z)-\lambda\Id\big) = 0$, and the Bloch vector $\mathbf{v}$ satisfies $\big[f(z_1)-\lambda\Id\big]\vect{v} = 0$. Moreover, fixing the origin in the indexing of $\mathbf{u}$ as $\mathbf{u}_1 = z_1^0\vect{v}_1$ and $\mathbf{u}_{-1} = z_1^{-1}\vect{v}_k$. 
    Then the following eigenvalue problem becomes
    \begin{equation}\label{eq: construction eigenproblem twofold}
        \scalebox{0.8}{$ \left[
        \begin{pNiceMatrix}[columns-width=0em]
        \Block[draw,fill=blue!31,rounded-corners]{3-3}{} & & & &  \\
        & \mathbf{T}_B& & &  \\
        & &  & q &  \\
        & & q &  \eta & q & & \\
        & & & q & \Block[draw,fill=red!31,rounded-corners]{3-3}{} & &  \\
        & & & & & \mathbf{T}_A &  \\
        & & & & & &  
        \end{pNiceMatrix} - \lambda \mathrm{Id} \right]$} \scalebox{0.7}{$
        \begin{pNiceMatrix}[columns-width=0em]
        a{\color{blue!80}\begin{pmatrix}
            \vdots \\
            \vect{u}_2 \\
            \vect{u}_1
        \end{pmatrix}}\\
        t\\
        {\color{red!80}\begin{pmatrix}
            \vect{u}_1 \\
            \vect{u}_2\\
            \vdots
        \end{pmatrix}}
        \end{pNiceMatrix}$} = 
        \scalebox{0.7}{$
        \begin{pmatrix}
            \vdots \\
            0 \\
            qt - a(\mathbf{L}_B)_{0,1}\vect{u}_{-1} \\
            aq\vect{u}_1 + (\eta-\lambda)t + q\vect{u}_1\\
            qt - (\mathbf{L}_A)_{0,-1}\vect{u}_{-1} \\
            0 \\
            \vdots
        \end{pmatrix}$}.
    \end{equation}
    From the symmetry \eqref{eq: mirror symmetry} it follows $(\mathbf{L}_B)_{0,1} = (\mathbf{L}_A)_{0,-1} = b_1$, where the $b_1$ is the same as from the Toeplitz structure.
    First, from $qt - ab_1\vect{u}_{-1} = qt - b_1\vect{u}_{-1} = 0$ it follows that $a = 1$. Note that $\vect{u}_{-1} = z_1^{-1}\mathbf{v}_k$, from which we deduce that $t = z_1^{-1}\mathbf{v}_kb_1/q$. Substituting into $aq\vect{u}_1 + (\eta-\lambda)t + q\vect{u}_1 = 0$ and solving for $\lambda$ yields,
    \begin{equation}\label{eq: lambda impedance}
        \lambda = \frac{2q^2z_1(\lambda)\mathbf{v}_1}{b_1\mathbf{v}_k} + \eta.
    \end{equation}
    The structure of the eigenvector is an immediate consequence of the construction presented in \eqref{eq: construction eigenproblem twofold}. Since the root of the smallest magnitude was chosen, the vector $\mathbf{u}$ is exponentially decaying in the index, so the interface eigenvector $\mathbf{w}$ in \eqref{eq: impedance matched interface} is exponentially localised around the interface, which completes the proof.
\end{proof}

A direct observation from the eigenvector construction in Theorem \ref{thm: spectral inclusion} is that edge induced interface modes are dipole modes. On the other hand, interface modes supported with eigenvalues that satisfy \eqref{eq: interface frequency} as presented in Theorem \ref{thm: refexion symmetric impedance matching} are monopole modes. This will be further investigated and numerically illustrated in a physical setting with damped resonator chains in Section \ref{Sec: resonator chains}.

Another important observation is that interface modes cannot be both edge-induced and interface-matched.
This is because for $\lambda \in \sigma(\mathbf{B}_0)$ by Theorem \ref{cor: coburn eigenvector} it follows $\mathbf{v}_k = 0$, which would violate an important assumption in Theorem \ref{thm: refexion symmetric impedance matching} and vice versa. Both types of interface eigenvalues will be numerically illustrated in Figure \ref{Fig: Interface Twofold Dimer}.

\begin{figure}[htb]
    \centering
    \subfloat[][Computation performed for $a_1 = 1.2, a_2 = 2, b_1 =c_1 =0.8-0.4\i$ and $b_2 = c_2 = 1.2-0.2\i$.]%
    {\includegraphics[width=0.453\linewidth]{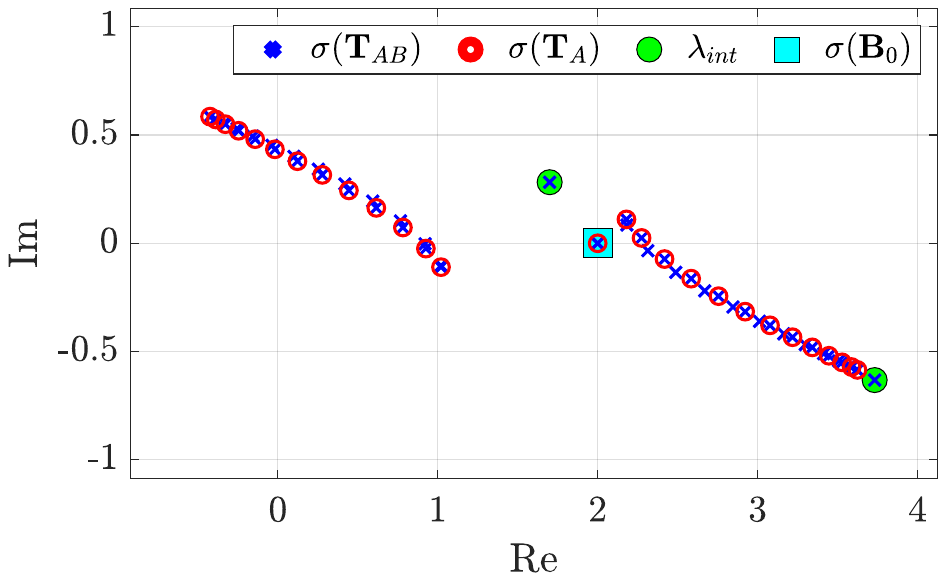}}\quad
    \subfloat[][Computation performed for $a_1 = 1.2,~ a_2 = 1,~ a_3 = 1.4 + 0.1\i,~  b_1 =c_1 =3.8+0.4\i$, $b_2 = c_2 = 1.2-0.9\i$ and $b_3 = c_3 = 2 + 0.1\i$.]%
    {\includegraphics[width=0.45\linewidth]{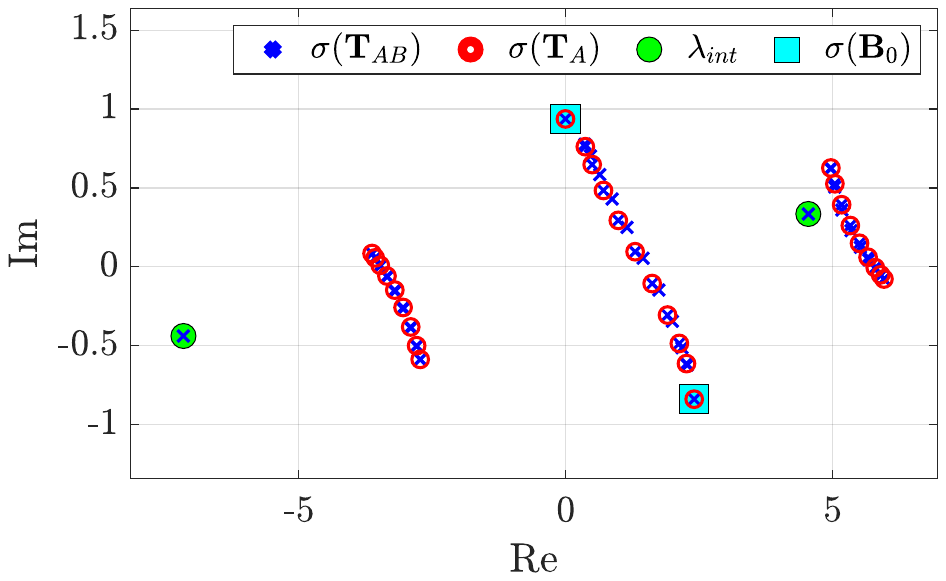}}
    \caption{Symmetric interface Toeplitz operators with complex valued entries. The value $\lambda_{\text{int}}$ is a numerically computed solution to the fixed-point problem in \eqref{eq: interface frequency} and correctly predicts all admissible interface eigenvalues, while $\sigma(\mathbf{B}_0)$ correctly predicts all edge induced interface eigenvalues.}
    \label{Fig: Interface Twofold Dimer}
\end{figure}
Equation~\eqref{eq: lambda impedance} naturally defines the function
\begin{equation}\label{eq: impedance analogue}
    F(\lambda) = \frac{2q^2z_1(\lambda)\mathbf{v}_1}{b_1\mathbf{v}_k} - \lambda + \eta,
\end{equation}
whose roots $\lambda \in \mathbb{C}$ are precisely the interface-matched eigenvalues. In addition, the function $F(\lambda)$ lends itself to efficient numerical root-finding, which is illustrated in Figure \ref{Fig: Impedance matched}.

\begin{figure}[htb]
    \centering
    \subfloat[][Computation performed for $a_1 = 1.2, a_2 = 2, b_1 =c_1 =0.8-0.4\i$ and $b_2 = c_2 = 1.2-0.2\i$.]%
    {\vspace{0.7mm}\includegraphics[width=0.465\linewidth]{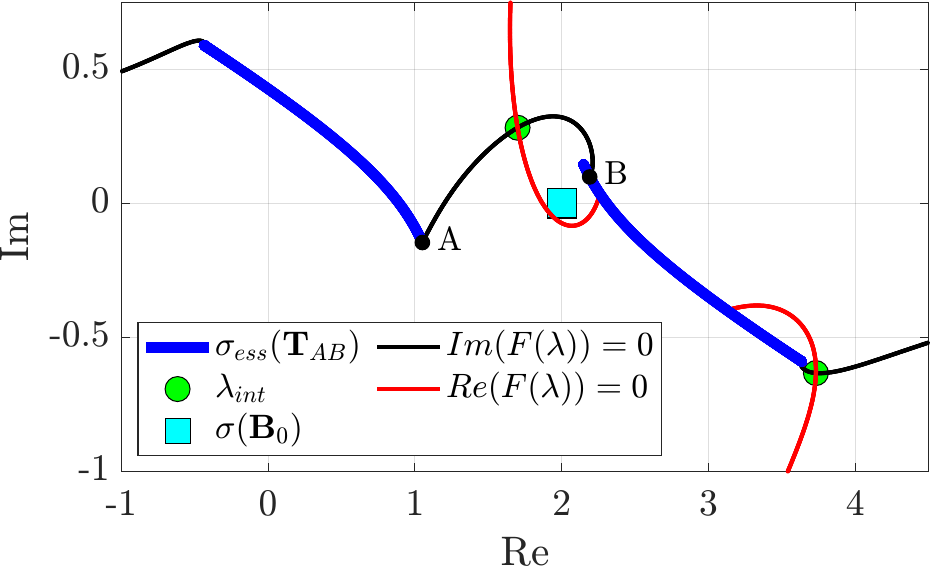}}\quad
    \subfloat[][The function $\mathrm{Re}\big(F(\lambda)\big)$, evaluated on $\mathrm{Im}\big(F(\lambda)\big)=0$ can be seen as an analogue to the discrete impedance in discrete damped systems.]%
    {\includegraphics[width=0.45\linewidth]{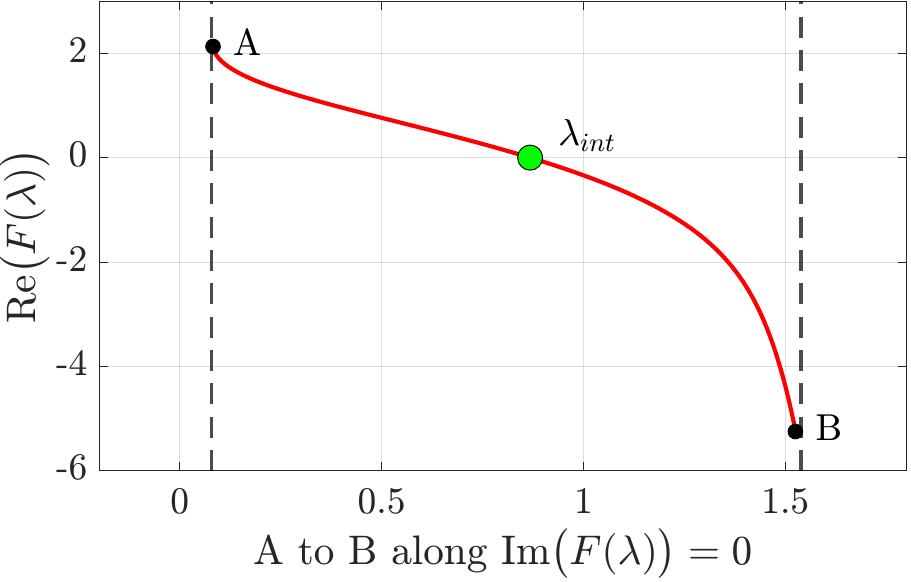}}
    \caption{The function $F(\lambda)$ may be seen as a discrete analogue of the impedance. Since the values close to the essential spectrum, that is $A$ and $B$ in Figure (B) stay finite, the interface mode is no longer topologically protected. Indeed adding a perturbation, which increases $\eta$ enough so that there does no longer exists a root to $F(\lambda) = 0$.  Computation performed for the same numerical values as in Figure \ref{Fig: Interface Twofold Dimer} (A).}
    \label{Fig: Impedance matched}
\end{figure}

The interface Toeplitz operator may also be constructed by  directly coupling the building blocks $\mathbf{T}_A$ and $\mathbf{T}_B$ by a common coupling without sharing a common interface site. As the results are similar to this section, we will not present them here and refer the reader to the Appendix \ref{Sec: common coupling}.

We conclude this section by examining the distinct nature of topological protection for interface modes. For the edge-induced interface modes of $\mathbf{T}_{AB}$ described in Theorem~\ref{thm: spectral inclusion}, the topological protection follows from that of the underlying edge modes, provided certain mild symmetry conditions on the symbol function are satisfied, as stated in Theorem~\ref{thm: inversion symmetry topological}, Propositions~\ref{prop: complex dimer topology} and~\ref{prop: top protected k geq 3}. In contrast, there is no analogous result on the topological protection of interface modes induced by a matched interface eigenvalue as given by Theorem~\ref{thm: refexion symmetric impedance matching}. Consequently, these modes lack topological protection and may disappear under compact perturbations of the coupling parameters $q$ and $\eta$ as already pointed out in \cite[Section 4]{10.1098/rspa.2022.0675}. These results are illustrated numerically for damped resonator chains in the following section, particularly in Figure \ref{Fig: Not topological}.

\section{Damped one dimensional resonator chains.}\label{Sec: resonator chains}
In one-dimensional resonator chains, the interaction is limited to nearest neighbours. Consequently, in the corresponding mathematical description, either via a tight-binding Hamiltonian or, as in this work, through a capacitance matrix, the resulting matrices 
are tridiagonal. Moreover, the periodicity in the resonator arrangements of the underlying structure is reflected in 
these matrices. Thus, in one dimension, analysing resonances in $k$-periodic resonator chains is equivalent 
to studying tridiagonal $k$-Toeplitz operators.
In this work, we consider the one-dimensional differential eigenvalue problem with damping,
\begin{equation}\label{eq: differential operator}
    \omega^2u = -\frac{1}{\mu_0}\frac{\partial}{\partial x} \left( \frac{1}{\varepsilon(x)} \frac{\partial u}{\partial x}\right).
\end{equation}
The magnetic permeability $\mu_0\in \R_{>0}$ is assumed to be constant. The permittivity $\varepsilon \colon \R \to \C$ whose imaginary part encodes damping is defined piecewise on the resonator domain $D := \bigcup_{i=1}^N D_i$ as
\begin{equation}\label{eq: piecewise permittivity}
    \varepsilon(x) = \begin{cases}
        \varepsilon_b, \quad &x \in D, \\
        \varepsilon, &x \in \R \setminus D.
    \end{cases}
\end{equation}
The wave speeds inside the resonators $D$ and in the background medium $\R \setminus D$ are given, respectively, by
\begin{equation}\label{eq: wavedpeed definition}
    v_b := \frac{1}{\sqrt{\mu_0\varepsilon_b}}, \qquad v := \frac{1}{\sqrt{\mu_0\varepsilon}},
\end{equation}
where $\delta := \varepsilon_b / \varepsilon$ denotes the contrast parameter. The wave speeds inside the resonators throughout a resonator chain comprising $N$ resonators will be saved in the matrix,
\begin{equation}\label{eq: wave speeds}
    V = \operatorname{diag}(v_b, \dots, v_b) \in \C^{N\times N}.
\end{equation}
For the stepwise defined materials, the differential eigenvalue problem \eqref{eq: differential operator} reduces to the boundary problem 
\begin{align}
\scalebox{0.9}{$
    \label{eq: system of coupled equations}
    \begin{dcases}
        \frac{\dd{^2}}{\dd x^2}u(x)+ \frac{\omega^2}{v^2}u(x) = 0, & x\in \R \setminus D ,\\
        \frac{\dd{^2}}{\dd x^2}u(x)+ \frac{\omega^2}{v_b^2}u(x) = 0, & x\in D ,\\
        u\vert_{\iR}(x^{\iLR}_{{i}}) - u\vert_{\iL}(x^{\iLR}_{{i}}) = 0, &  1\leq i\leq N ,\\
        \left.\frac{\dd u}{\dd x}\right\vert_{\iR}(x^{\iL}_{{i}}) - \delta\left.\frac{\dd u}{\dd x}\right\vert_{\iL}(x^{\iL}_{{i}}) = 0, & 1\leq i\leq N ,\\
        \delta\left.\frac{\dd u}{\dd x}\right\vert_{\iR}(x^{\iR}_{{i}}) - \left.\frac{\dd u}{\dd x}\right\vert_{\iL}(x^{\iR}_{{i}}) = 0, & 1\leq i\leq N,\\
        \big(\frac{\dd}{\dd |x|} - \mathrm{i} k \big) u = 0, & \text{for } x \in (-\infty, x^{\iL}_{1}) \cup (x^{\iR}_{N}, +\infty).  \\
    \end{dcases}$}
\end{align}

A common approach to realise subwavelength interactions is to employ high-contrast metamaterials, which are formed by embedding a collection of strongly contrasted resonators into a background medium such that $0 < \delta \ll 1$. We are interested in non-trivial frequencies $\omega$ such that the scattering problem \eqref{eq: system of coupled equations} is satisfied and such that $\omega \to 0$ as $\delta \to 0$. Such resonances will be denoted \emph{subwavelength resonances}.
It has been demonstrated in \cite{doi:10.1137/22M1503841} that within the subwavelength regime the resonances can be approximated in terms of a discrete capacitance matrix formulation.
The generalised capacitance matrix, taking into account the wave speeds \eqref{eq: wave speeds}, is of the form 
\begin{equation}\label{eq: generalised capacitance}
    \mathcal{C} := V C \in \C^{N\times N}.
\end{equation}
The resonances to \eqref{eq: system of coupled equations} are recovered by the fundamental theorem of capacitance. 
\begin{proposition}\label{prop: reduction to capacitance matrix}
Consider a system of $N$ subwavelength resonators and assume that the eigenvalues of the generalised capacitance matrix $\mathcal{C}$ defined in \eqref{eq: generalised capacitance} are simple. Then, the $N$ subwavelength resonant frequencies $\omega_i$ of \eqref{eq: system of coupled equations} satisfy to the first order
    \begin{align*}
        \omega_i = \sqrt{\delta\lambda_i} + \mathcal{O}(\delta),~i\in\{1, \dots, N\}
    \end{align*}
    where $(\lambda_i)_{1\leq i\leq N}$ are the eigenvalues of the generalised capacitance matrix \eqref{eq: generalised capacitance}.
\end{proposition}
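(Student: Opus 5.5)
The strategy is to reduce the ODE system \eqref{eq: system of coupled equations} to a finite-dimensional nonlinear eigenvalue problem in $\omega$ and then perform an asymptotic expansion in $\delta$. Inside each resonator $D_i=(x_i^{\iL},x_i^{\iR})$ we write $u(x)=\alpha_i e^{\i\omega x/v_b}+\beta_i e^{-\i\omega x/v_b}$. Outside the resonators we use plane waves with wavenumber $k=\omega/v$. On each gap $(x_i^{\iR},x_{i+1}^{\iL})$ the exterior solution is fixed by its two boundary values through the exact Dirichlet-to-Neumann map of the interval; on the two unbounded components it is fixed by the outgoing radiation condition. These exterior relations express $u'|_{\mathrm{ext}}$ at every endpoint in terms of $u$ at the endpoints. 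The transmission conditions then become
\begin{equation*}
u'|_{D_i}(x_i^{\iLR}) = \delta\, u'|_{\mathrm{ext}}(x_i^{\iLR}).
\end{equation*}
This yields a $2N\times 2N$ analytic matrix system $\mathcal{A}(\omega,\delta)\mathbf{c}=0$ with $\mathbf{c}=(\alpha_i,\beta_i)_i$.

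The core computation is the expansion in $\delta$. We integrate the interior equation over $D_i$ and use the jump conditions to obtain
\begin{equation*}
-\frac{\omega^2}{v_b^2}\int_{D_i}u \;=\; \delta\Big(u'|_{\mathrm{ext}}(x_i^{\iR})-u'|_{\mathrm{ext}}(x_i^{\iL})\Big).
\end{equation*}
In the subwavelength regime, $\omega=O(\sqrt\delta)$. Hence the interior derivative is $O(\delta)$, so $u$ is constant on each $D_i$ to leading order: $u|_{D_i}=u_i+O(\delta)$. For the exterior flux, the gap DtN map evaluated at $\omega\to0$ gives
\begin{equation*}
u'(x_i^{\iR})=\frac{u_{i+1}-u_i}{s_i}+O(\omega),
\end{equation*}
where $s_i$ is the length of the $i$-th gap. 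The contributions from the unbounded components are only $O(\omega)$, because the outgoing condition gives $u'=\pm\i k u$.

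Substituting these into the integrated relation gives $\omega^2 |D_i| u_i/v_b^2 = \delta (C\mathbf{u})_i + O(\delta\omega)$. Here $C$ is the tridiagonal capacitance matrix whose off-diagonal entries are $-1/s_i$ and whose diagonal entries are the sums of the adjacent $1/s$. After absorbing the lengths $|D_i|$ (as in the paper's normalisation of $C$) and the factor $v_b^2$ into $V$, this becomes
\begin{equation*}
\omega^2\mathbf{u} = \delta\,\mathcal{C}\,\mathbf{u} + O(\delta^{3/2}).
\end{equation*}

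To turn this formal expansion into the stated estimate, I will appeal to Gohberg–Sigal theory, or equivalently an implicit-function argument. Rescale $\omega=\sqrt{\delta}\,\tau$. The problem then becomes $(\mathcal{C}-\tau^2)\mathbf{u}+O(\sqrt\delta)=0$, and this perturbation is analytic in $\sqrt\delta$. Since each eigenvalue $\lambda_i$ of $\mathcal{C}$ is simple, $\tau_i^2=\lambda_i$ is a simple root of $\det(\mathcal{C}-\tau^2)$. Rouché's theorem for the characteristic value then gives a unique root $\tau_i=\sqrt{\lambda_i}+O(\sqrt\delta)$, that is, $\omega_i=\sqrt{\delta\lambda_i}+O(\delta)$.

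I expect the main obstacle to be uniform control of the remainder. The bound must be verified carefully for complex $\varepsilon$, where damping makes $v$ and $v_b$ complex. The DtN expansion requires $\omega s_i/v$ to be small and the interval problem to be nonresonant. The simplicity hypothesis is what makes the root-counting argument go through.
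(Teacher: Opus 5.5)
The paper does not prove this proposition itself; it imports it from \cite{doi:10.1137/22M1503841}. Your outline (explicit piecewise solutions, exterior Dirichlet-to-Neumann relations, integration over each $D_i$, then root counting in $\tau=\omega/\sqrt\delta$) is the natural route, and it is sound for every \emph{nonzero} eigenvalue. There is, however, a genuine gap at $\lambda=0$, and this case always occurs here. The matrix $C$ you describe, like \eqref{eq: strucutre capacitance matrix}, has zero row sums, because the unbounded exterior contributes only $O(\omega)$. Hence $C\mathbf{1}=0$ and $0\in\sigma(\mathcal{C})$ for every chain.

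Your claim that simplicity of $\lambda_i$ makes $\tau=\sqrt{\lambda_i}$ a simple root of $\tau\mapsto\det(\mathcal{C}-\tau^2)$ fails at $\lambda_1=0$. There $\det(\mathcal{C}-\tau^2)=-\tau^2\prod_{j\ge 2}(\lambda_j-\tau^2)$ has a double zero at $\tau=0$. Rouché then produces two characteristic values near $0$, not a unique one. Moreover, a perturbation known only to be $O(\sqrt\delta)$ can move a double zero by $O(\delta^{1/4})$. So your argument gives only $\omega_1=O(\delta^{3/4})$, not the claimed $O(\delta)$.

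To close the gap, keep the structure you discarded when you replaced $O(\delta\omega)$ by $O(\delta^{3/2})$. The only $O(\sqrt\delta)$ term in the rescaled problem comes from the radiation condition $u'=\pm\i k u$ at the two ends, and it is linear in $\tau$. So the problem reads $\bigl(\mathcal{C}-\tau^2-\i\sqrt\delta\,\tau\,\mathcal{E}+O(\delta)\bigr)\mathbf{u}=0$, where $\mathcal{E}$ is diagonal and supported on the first and last resonator. Reduce onto $\ker\mathcal{C}=\operatorname{span}\{\mathbf{1}\}$ by a Schur complement (Lyapunov--Schmidt), using the left null vector of $\mathcal{C}$, which is proportional to $V^{-1}\mathbf{1}$. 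This gives the scalar equation $\tau^2+\i c\sqrt\delta\,\tau+O(\delta)=0$, with $c=2v_b^2/(v\sum_i|D_i|)$ in your normalisation. The further rescaling $\tau=\sqrt\delta\,\kappa$, i.e.\ $\omega=\delta\kappa$, then yields two roots with $\kappa=O(1)$. One is the trivial characteristic value $\omega=0$ (the constant mode, present for every $\delta$); the other is $\omega\approx-\i c\,\delta$. Both are $O(\delta)$, as the statement requires.

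Separately, you assume the scaling $\omega=O(\sqrt\delta)$ rather than derive it. To know that the roots you find are all the subwavelength resonances, and that there are exactly $N$ of them up to the $\pm$ pairing, you need a count in a fixed small $\omega$-disc. You can do this by Gohberg--Sigal, using interior bases such as $\cos(\omega x/v_b)$ and $\sin(\omega x/v_b)/\omega$, which do not degenerate at $\omega=0$ (your exponential basis does). At $\delta=0$ the resonators decouple into Neumann problems, and $\omega=0$ has total multiplicity $2N$.
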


We proceed by examining interface modes in damped subwavelength resonator chains, which arise when two mirrored materials are joined at an interface. Particular attention is paid to distinguishing between topologically protected interface modes (see Definition~\ref{def: topologically protected}) and topologically unprotected interface modes, which may vanish under compact perturbations. A representative example of such a resonator chain is shown in Figure \ref{fig: geometrical defect}.

\begin{figure}[H]
    \centering
    \begin{adjustbox}{width=\textwidth}
        \begin{tikzpicture}
        \draw[|-|,dashed] (0,0.5) -- (1.5,0.5);
        \node[above] at (0.75,0.5) {\LARGE$s_2$};
        \draw[line width=4pt, red] (1.5,0) -- (2.5,0);
        \node[below] at (2,0) {$D_{1}$};
        \draw[-,dotted] (1.5,0) -- (1.5,0.5);
        
        \draw[|-|,dashed] (2.5,0.5) -- (3.0,0.5);
        \node[above] at (2.75,0.5) {\LARGE$s_1$};
        \draw[line width=4pt, red] (3,0) -- (4,0);
        \node[below] at (3.5,0) {$D_{2}$};
        \draw[-,dotted] (2.5,0) -- (2.5,0.5);
        \draw[-,dotted] (3.0,0) -- (3.0,0.5);
        \draw[-,dotted] (4,0) -- (4,0.5);
        
        \begin{scope}[shift={(+4,0)}]
        \draw[|-|,dashed] (0.0,0.5) -- (1.5,0.5);
        \node[above] at (0.75,0.5) {\LARGE$s_2$};
        \draw[line width=4pt, red] (1.5,0) -- (2.5,0);
        \node[below] at (2,0) {$D_{3}$};
        \draw[-,dotted] (1.5,0) -- (1.5,0.5);
        \node at (4.75,.25) {\dots};
        \end{scope}

        \begin{scope}[shift={(+7.5,0)}]
        \draw[line width=4pt, red] (-0.5,0) -- (0.5,0);
        \node[below] at (0.0,0) {$D_{4}$};
        \draw[-,dotted] (-1,0) -- (-1,0.5);
        \draw[-,dotted] (-0.5,0) -- (-0.5,0.5);
        \draw[|-|,dashed] (-1.0,0.5) -- (-0.5,0.5);
        \node[above] at (-0.75,0.5) {\LARGE$s_1$};
        \draw[-,dotted] (2,-0.5) -- (2,1.0);
        \draw[-,dotted] (0.5,-0.5) -- (0.5,1.0);
        
        \draw[|-|,dashed] (2,0.5) -- (3.5,0.5);
        \node[above] at (2.75,0.5) {\LARGE$s_2$};
        \draw[line width=4pt, red] (3.5,0) -- (4.5,0);
        \node[below] at (4,0) {$D_{2m-1}$};
        \draw[-,dotted] (2,0) -- (2,0.5);
        \draw[-,dotted] (3.5,0) -- (3.5,0.5);
        \draw[-,dotted] (4.5,0) -- (4.5,0.5);
\begin{scope}[shift={(+2.5,0)}]
        \draw[|-|,dashed] (2,0.5) -- (2.5,0.5);
        \node[above] at (2.25,0.5) {\LARGE$s_1$};
        \draw[line width=4pt, red] (2.5,0) -- (3.5,0);
        \node[below] at (3,0) {$D_{2m}$};
        \draw[-,dotted] (3.5,-0.5) -- (3.5,1.0);
        \draw[-,dotted] (2.5,0) -- (2.5,0.5);
        \end{scope}
        \end{scope}
        
\begin{scope}[shift={(-4,0)}]

        \draw[|-|,dashed] (0,0.5) -- (0.5,0.5);
        \node[above] at (0.25,0.5) {\LARGE$s_1$};
        \draw[line width=4pt, blue] (0.5,0) -- (1.5,0);
        \node[below] at (1,0) {$D_{-1}$};
        \draw[-,dotted] (0.5,0) -- (0.5, 0.5);
        
        \draw[|-|,dashed] (1.5,0.5) -- (3,0.5);
        \node[above] at (2.25,0.5) {\LARGE$s_2$};
        \draw[line width=4pt] (3,0) -- (4,0);
        \node[below] at (3.5,0) {$D_{0}$};
        \draw[-,dotted] (1.5,0) -- (1.5,0.5);
        \draw[-,dotted] (3,0) -- (3,0.5);
        \draw[-,dotted] (4,0) -- (4,0.5);

        \draw[-,dotted] (-1.0,-0.5) -- (-1.0,1.0);
        \draw[-,dotted] (4.0,-0.5) -- (4.0,1.0);
        \draw[-,dotted] (3.0,-0.5) -- (3.0,1.0);
        \draw[-,dotted] (8.0,-0.5) -- (8.0,1.0);
        \end{scope}
        
        \begin{scope}[shift={(-8.5,0)}]
        \node at (-1.25,.25) {\dots};
        \draw[line width=4pt, blue] (1.0,0) -- (2.0,0);
        \node[below] at (1.5,0) {$D_{-3}$};
        
        \draw[|-|,dashed] (2.0,0.5) -- (3.5,0.5);
        \node[above] at (2.75,0.5) {\LARGE$s_2$};
        \draw[line width=4pt, blue] (3.5,0) -- (4.5,0);
        \node[below] at (4,0) {$D_{-2}$};
        \draw[-,dotted] (2.0,0) -- (2.0,0.5);
        \draw[-,dotted] (3.5,0) -- (3.5,0.5);
        \draw[-,dotted] (4.5,0) -- (4.5,0.5);
        \end{scope}

        \begin{scope}[shift={(-15.5,0)}]
        \draw[line width=4pt, blue] (1,0) -- (2,0);
        \draw[-,dotted] (1,-0.5) -- (1,1.0);
        \node[below] at (1.5,0) {$D_{-2m}$};

        \node[below] at (-1.5,0.5) {\huge Arrangement $A$: };

        \draw[|-|,dashed] (2,0.5) -- (2.5,0.5);
        \node[above] at (2.25,0.5) {\LARGE$s_1$};
        \draw[line width=4pt, blue] (2.5,0) -- (3.5,0);
        \node[below] at (3,0) {$D_{-2m+1}$};
        \draw[-,dotted] (2,0) -- (2,0.5);
        \draw[-,dotted] (2.5,0) -- (2.5,0.5);
        \draw[-,dotted] (3.5,0) -- (3.5,0.5);
\begin{scope}[shift={(+2,0)}]
        \draw[|-|,dashed] (1.5,0.5) -- (3,0.5);
        \node[above] at (2.25,0.5) {\LARGE$s_2$};
        \draw[-,dotted] (3,-0.5) -- (3,1.0);
        
        \draw[line width=4pt, blue] (4.5,0) -- (5.5,0);
        \node[below] at (5,0) {$D_{-4}$};

        \draw[|-|,dashed] (5.5,0.5) -- (6,0.5);
        \draw[-,dotted] (5.5,0) -- (5.5,0.5);
        \draw[-,dotted] (6,0) -- (6,0.5);
        \node[above] at (5.75,0.5) {\LARGE$s_1$};
        \draw[-,dotted] (4.5,-0.5) -- (4.5,1.0);
        \draw[-,dotted] (3,0) -- (3,0.5);
        \end{scope}
        \end{scope}
        
        \end{tikzpicture}
    \end{adjustbox}\\
    \vspace{3mm}
    
    \begin{adjustbox}{width=\textwidth}
        \begin{tikzpicture}
        \draw[|-|,dashed] (0,0.5) -- (0.5,0.5);
        \node[above] at (0.25,0.5) {\LARGE$s_2$};
        \draw[line width=4pt, red] (0.5,0) -- (1.5,0);
        \node[below] at (1,0) {$D_{1}$};

        \draw[-,dotted] (-0.0,-0.5) -- (-0.0,1.0);
        
        \draw[-,dotted] (0.5,0) -- (0.5,0.5);
        
        \draw[|-|,dashed] (1.5,0.5) -- (3.0,0.5);
        \node[above] at (2.25,0.5) {\LARGE$s_1$};
        \draw[line width=4pt, red] (3,0) -- (4,0);
        \node[below] at (3.5,0) {$D_{2}$};
        \draw[-,dotted] (1.5,0) -- (1.5,0.5);
        \draw[-,dotted] (3.0,0) -- (3.0,0.5);
        \draw[-,dotted] (4,0) -- (4,0.5);
        
        \begin{scope}[shift={(+4,0)}]
        \draw[|-|,dashed] (0.0,0.5) -- (0.5,0.5);
        \node[above] at (0.25,0.5) {\LARGE$s_2$};

        \draw[-,dotted] (-0.0,-0.5) -- (-0.0,1.0);

        \draw[line width=4pt, red] (0.5,0) -- (1.5,0);
        \node[below] at (1,0) {$D_{3}$};
        \draw[-,dotted] (0.5,0) -- (0.5,0.5);
        \node at (4.75,.25) {\dots};
        \end{scope}

        \begin{scope}[shift={(+7.5,0)}]
        \draw[line width=4pt, red] (-0.5,0) -- (0.5,0);
        \node[below] at (0.0,0) {$D_{4}$};

        \draw[|-|,dashed] (-2,0.5) -- (-0.5,0.5);
        \node[above] at (-1.25,0.5) {\LARGE$s_1$};

        \draw[-,dotted] (-0.5,0) -- (-0.5,0.5);
        \draw[-,dotted] (-2,0) -- (-2,0.5);
        
        \draw[-,dotted] (0.5,-0.5) -- (0.5,1.0);

        \draw[|-|,dashed] (2,0.5) -- (2.5,0.5);
        \node[above] at (2.25,0.5) {\LARGE$s_2$};
        \draw[line width=4pt, red] (2.5,0) -- (3.5,0);
        \node[below] at (3,0) {$D_{2m-1}$};
        \draw[-,dotted] (2,0) -- (2,0.5);
        \draw[-,dotted] (3.5,0) -- (3.5,0.5);
        \draw[-,dotted] (2.5,0) -- (2.5,0.5);
\begin{scope}[shift={(+2.5,0)}]
        \draw[|-|,dashed] (1,0.5) -- (2.5,0.5);
        \node[above] at (1.75,0.5) {\LARGE$s_1$};
        \draw[line width=4pt, red] (2.5,0) -- (3.5,0);
        \node[below] at (3,0) {$D_{2m}$};

        \draw[-,dotted] (2.5,0) -- (2.5,0.5);
        \draw[-,dotted] (3.5,-0.5) -- (3.5,1.0);
        \draw[-,dotted] (-0.5,-0.5) -- (-0.5,1.0);

        \end{scope}
        \end{scope}
        
\begin{scope}[shift={(-4,0)}]

        \draw[|-|,dashed] (0,0.5) -- (1.5,0.5);
        \node[above] at (0.75,0.5) {\LARGE$s_1$};
        \draw[line width=4pt, blue] (1.5,0) -- (2.5,0);
        \node[below] at (2,0) {$D_{-1}$};
        \draw[-,dotted] (1.5,0) -- (1.5, 0.5);
        
        \draw[|-|,dashed] (2.5,0.5) -- (3,0.5);
        \node[above] at (2.75,0.5) {\LARGE$s_2$};
        \draw[line width=4pt] (3,0) -- (4,0);
        \node[below,fill=white] at (3.5,-0.1) {$D_{0}$};
        \draw[-,dotted] (1.5,0) -- (1.5,0.5);
        \draw[-,dotted] (3,0) -- (3,0.5);
        \draw[-,dotted] (4,0) -- (4,0.5);
        \draw[-,dotted] (2.5,0) -- (2.5,0.5);

        \draw[-,dotted] (3,-0.5) -- (3,1.0);
        \draw[-,dotted] (-1,-0.5) -- (-1,1.0);
        \end{scope}
        
        \begin{scope}[shift={(-8.5,0)}]
        \node at (-1.25, 0.25) {\dots};
        \draw[line width=4pt, blue] (2.0,0) -- (3.0,0);
        \node[below] at (2.5,0) {$D_{-3}$};

        \draw[line width=4pt, blue] (-0.5,0) -- (0.5,0);
        \node[below] at (0,0) {$D_{-4}$};
        \draw[-,dotted] (-0.5,-0.5) -- (-0.5,1.0);
        \draw[-,dotted] (0.5,0.0) -- (0.5,0.5);
        \draw[-,dotted] (2,0.0) -- (2,0.5);

        \draw[|-|,dashed] (0.5,0.5) -- (2,0.5);
        \node[above] at (1.25,0.5) {\LARGE$s_1$};

        \draw[|-|,dashed] (3.0,0.5) -- (3.5,0.5);
        \node[above] at (3.25,0.5) {\LARGE$s_2$};
        \draw[line width=4pt, blue] (3.5,0) -- (4.5,0);
        \node[below] at (4,0) {$D_{-2}$};
        \draw[-,dotted] (3.0,0) -- (3.0,0.5);
        \draw[-,dotted] (3.5,0) -- (3.5,0.5);
        \draw[-,dotted] (4.5,0) -- (4.5,0.5);
        \end{scope}

        \begin{scope}[shift={(-15.5,0)}]
        \draw[line width=4pt, blue] (1,0) -- (2,0);
        \node[below] at (1.5,0) {$D_{-2m}$};

        \node[below] at (-1.5,0.5) {\huge Arrangement $B$: };
        \draw[|-|,dashed] (2,0.5) -- (3.5,0.5);
        \node[above] at (2.75,0.5) {\LARGE $s_1$};
        \draw[line width=4pt, blue] (3.5,0) -- (4.5,0);
        \node[below] at (4,0) {$D_{-2m+1}$};
        \draw[-,dotted] (2,0) -- (2,0.5);
        \draw[-,dotted] (4.5,0) -- (4.5,0.5);
        \draw[-,dotted] (3.5,0) -- (3.5,0.5);
\begin{scope}[shift={(+2,0)}]
        \draw[|-|,dashed] (2.5,0.5) -- (3,0.5);
        \node[above] at (2.75,0.5) {\LARGE $s_2$};
        \draw[-,dotted] (3,-0.5) -- (3,1.0);
        \draw[-,dotted] (-1,-0.5) -- (-1,1.0);

        \draw[-,dotted] (3,0) -- (3,0.5);
        \end{scope}
        \end{scope}
        
        \end{tikzpicture}
    \end{adjustbox}
    \caption{Finite resonator chain, consisting of $N_{tot} = 4m + 1$ resonators in total, all having length $\ell = 1$. Both resonator arrangements are mirror symmetric around $D_0$. Arrangement $A$ has spacings satisfying $s_1 < s_2$, whereas Arrangement $B$ has spacings satisfying $s_1 > s_2$.}
    \label{fig: geometrical defect}
\end{figure}
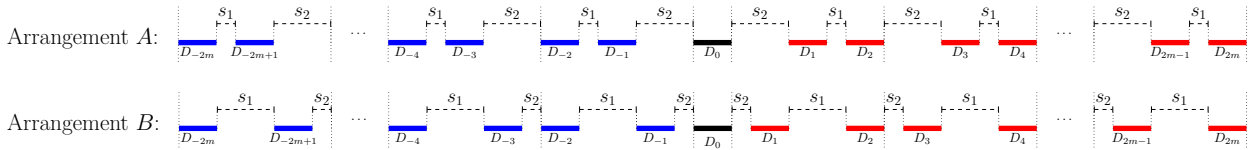

It was shown in~\cite{ammari2024exponentiallypreprint} that the capacitance matrix for a dimer chain, as illustrated in Figure~\ref{fig: geometrical defect}, is a mirror-symmetric tridiagonal $2$-Toeplitz matrix with perturbed diagonal corners, explicitly given by

\setcounter{MaxMatrixCols}{20}
\begin{equation}
\label{eq: strucutre capacitance matrix}
    C = \scalebox{0.7}{$\begin{pNiceMatrix}
    \Block[draw,fill=blue!30,rounded-corners]{6-6}{} \tilde{\alpha} & \beta_{1} &&&&&&&&&\\
\beta_{1} & \alpha & \beta_{2}  &&&&&&&&\\
& \beta_{2} & \alpha & \beta_{1}  &&&&&&&\\
       && \sddots     & \sddots     & \sddots     &&&&&&\\
       &&& \beta_{2} & \alpha & \beta_{1}  &&&&&\\
       &&&& \beta_{1} & \alpha & \beta_{2}  &&&&\\
       &&&&& \beta_{2} & \eta & \beta_{2}  &&&\\
       &&&&&& \beta_{2} & \alpha \Block[draw,fill=red!30,rounded-corners]{6-6}{} & \beta_{1}  &&\\
       &&&&& && \beta_{1} & \alpha & \beta_{2}  &\\
       &&&&&&&& \sddots     & \sddots & \sddots     &\\
       &&&&&&&&& \beta_{1} & \alpha & \beta_{2}  \\
       &&&&&&&&& & \beta_{2} & \alpha & \beta_{1}  \\
       &&&&&&&&&&& \beta_{1} & \tilde{\alpha}
    \end{pNiceMatrix},$}
\end{equation}
where the entries are defined by the spacings of the resonators $s_1$ and $s_2$ as in Figure~\ref{fig: geometrical defect},
\begin{equation}
    \beta_1 = -s_1^{-1}, \beta_2 = -s_2^{-1}, \alpha = s_1^{-1} + s_2^{-1}, \eta = 2s_2^{-1}, \tilde{\alpha}  = s_1^{-1}.
\end{equation}
The wave speeds inside the resonators are defined in~\eqref{eq: wavedpeed definition} and encoded in the matrix $V$ as in~\eqref{eq: wave speeds}, and due to the local inversion symmetry it must hold $v_1 = v_2 \in \C$. It is straightforward to verify that the generalised capacitance matrix $\mathcal{C}$ defined in \eqref{eq: generalised capacitance} has the same structure as \eqref{def: twofold Toeplitz operator}, with the building block $\mathbf{T}_A$ whose symbol is given by
\begin{equation}
    f_A(z) = v_1^2\begin{pmatrix}
        \alpha & \beta_1 + \beta_2z\\
        \beta_1 + \beta_2/z & \alpha
    \end{pmatrix} \in \C^{2\times 2}.
\end{equation}
In addition, $f_A(z)$ is persymmetric and therefore, by Proposition \ref{prop: complex dimer topology}, the edge modes of $\mathbf{T}_A$ are topologically protected. By Theorem \ref{thm: spectral inclusion}, this protection extends to the interface modes.
The existence of an edge mode is characterised by Theorem \ref{thm: existence of edge modes}, and for dimers the condition is explicitly derived in Example \ref{Example: existence of edge modes}. The operator $\mathbf{T}_A$ possesses an edge mode if and only if $|\beta_1| < |\beta_2|$ or equivalently for spacings $s_1>s_2$, corresponding to Arrangement $B$ in Figure \ref{fig: geometrical defect}. The edge induced interface eigenvalue and mode are illustrated in Figure \ref{Fig: SSH complex wavespeed} (A) and (C).

The situation is different for a resonator chain corresponding to Arrangement~$A$ in Figure~\ref{fig: geometrical defect}. In this case, the building block $\mathbf{T}_A$ does not possess edge modes, see Example \ref{Example: existence of edge modes}, yet the composite structure still supports an interface mode arising from a matched eigenvalue at the interface, as predicted by Theorem~\ref{thm: spectral inclusion}. The key distinction from Arrangement~$B$ is that this interface mode lacks topological protection, since it originates from an eigenvalue matched interface rather than from a topologically protected edge mode.

\begin{figure}[htb]
    \centering
    \subfloat[][Computations performed for $s_1 = 2, s_2 = 1$.]%
    {\includegraphics[width=0.45\linewidth]{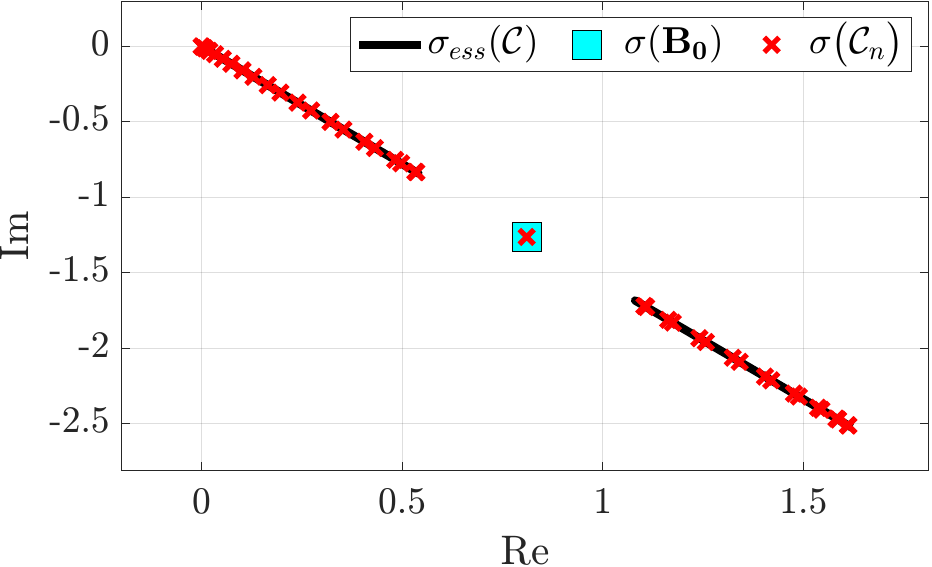}}\quad
    \subfloat[][Computations performed for $s_1 = 1, s_2 = 2$.]%
    {\includegraphics[width=0.45\linewidth]{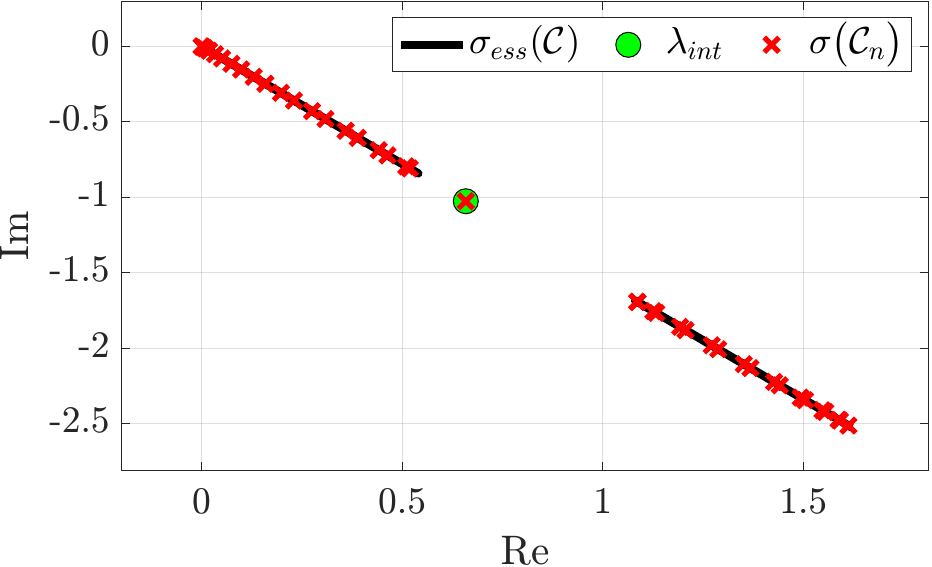}}\\
    \vspace{3mm}
    \subfloat[][Interface Dipole mode associated to (A).]%
    {\includegraphics[width=0.45\linewidth]{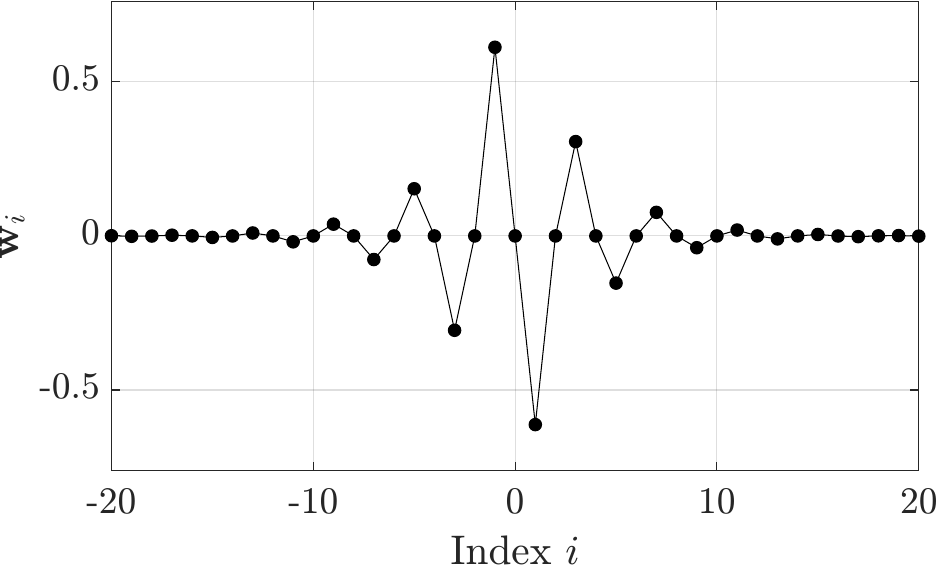}}\quad
    \subfloat[][Interface Monopole mode associated to (B).]%
    {\includegraphics[width=0.45\linewidth]{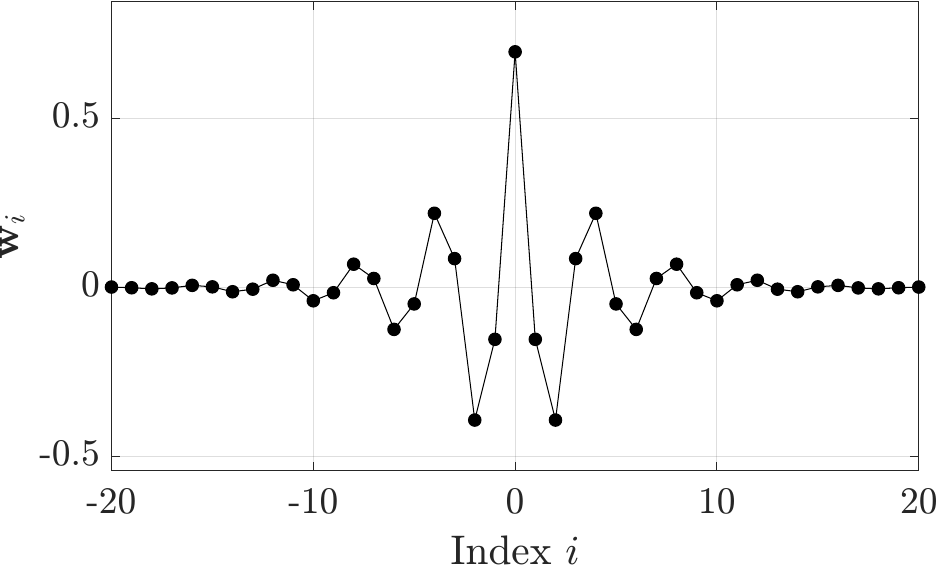}}
    \caption{ In Figure (A) we observe an edge induced interface mode. This is due to the fact that for $s_1 > s_2$ the one-sided Toeplitz matrix supports an edge mode, which by Theorem \ref{thm: spectral inclusion} carries over to the interface structure. Figure (B) the matched eigenvalue interface as given by Theorem \ref{thm: refexion symmetric impedance matching}. In both simulations, we considered a complex wave speed $v = e^{-\i}\approx0.54- 0.85\i$ in a chain comprising $N = 41$ resonators.}
    \label{Fig: SSH complex wavespeed}
\end{figure}

To illustrate this distinction between Arrangement $A$ and $B$ illustrated in Figure \ref{fig: geometrical defect}, we consider a compact perturbation consisting of changing only the two resonator spacings adjacent to the interface to $s_{\mathrm{int}}$, that is, the spacings separating $D_{-1}$ from $D_{0}$ and $D_{0}$ from $D_{1}$.
The interface mode in Figure \ref{Fig: SSH complex wavespeed} (B) is given by a matched interface eigenvalue given by roots to \eqref{eq: impedance analogue},
\begin{equation}
    F(\lambda) = \frac{2q^2z_1(\lambda)\mathbf{v}_1}{b_1\mathbf{v}_k} - \lambda + \eta = \frac{2}{s_{int}}\left(-\frac{s_{int}z_1(\lambda)\mathbf{v}_k}{s_2\mathbf{v}_1} + 1\right) - \lambda.
\end{equation}
Since $\lambda$ lies within the bandgap, $\mathrm{Im}(F(\lambda)) = 0$ and $\mathrm{Re}(F(\lambda))$ is a bounded function on a compact domain, one can always find a value of $s_{\mathrm{int}}$ for which $F(\lambda)$ has no roots. The interface mode can therefore be eliminated without closing the spectral gap, confirming that it is not topologically protected. We illustrate this behaviour numerically in Figure \ref{Fig: Not topological} (B).

\begin{figure}[htb]
    \centering
    \subfloat[][Computations performed for $s_1 = 2, s_2 = 1$.]%
    {\includegraphics[width=0.45\linewidth]{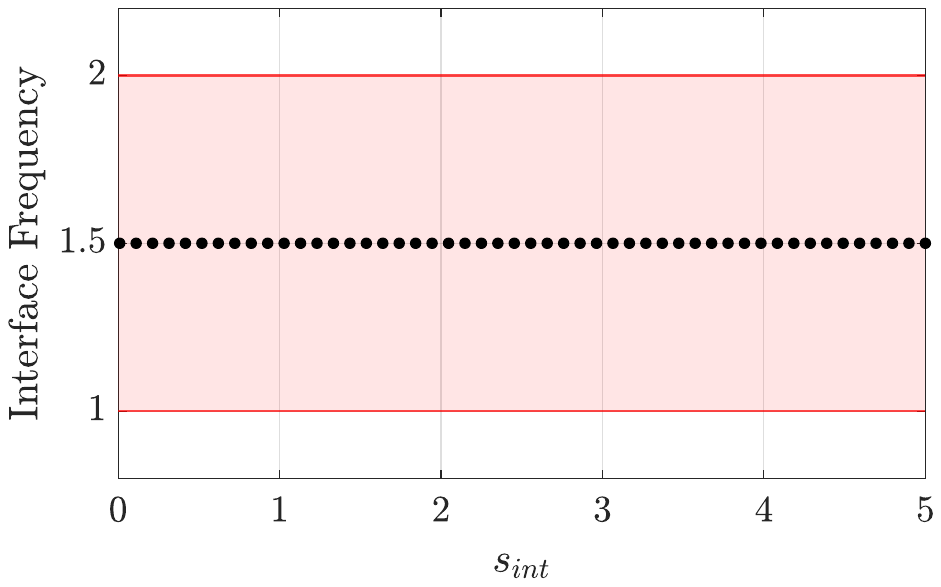}}\quad
    \subfloat[][Computations performed for $s_1 = 1, s_2 = 2$.]%
    {\includegraphics[width=0.45\linewidth]{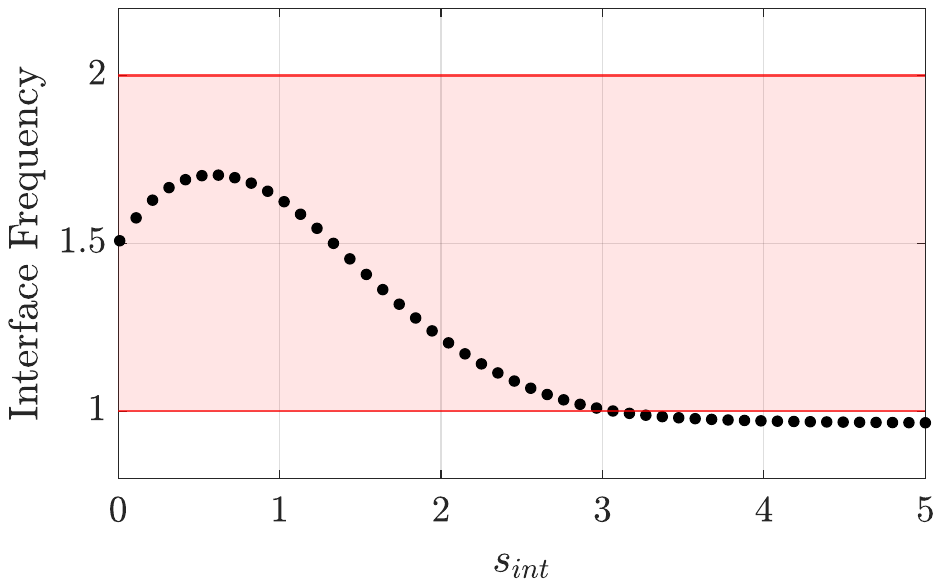}}
    \caption{Figure~(A) illustrates the topological protection of the interface mode, which is unaffected under a compact perturbation. In Figure~(B), the interface mode is not topologically protected, and a small perturbation pushes it into the essential spectrum. The bandgap is shaded in red.}
    \label{Fig: Not topological}
\end{figure}

\begin{figure}[htb]
    \centering
    \subfloat[][Computations performed for $s_1 = 2, s_2 = 1$.]%
    {\includegraphics[width=0.45\linewidth]{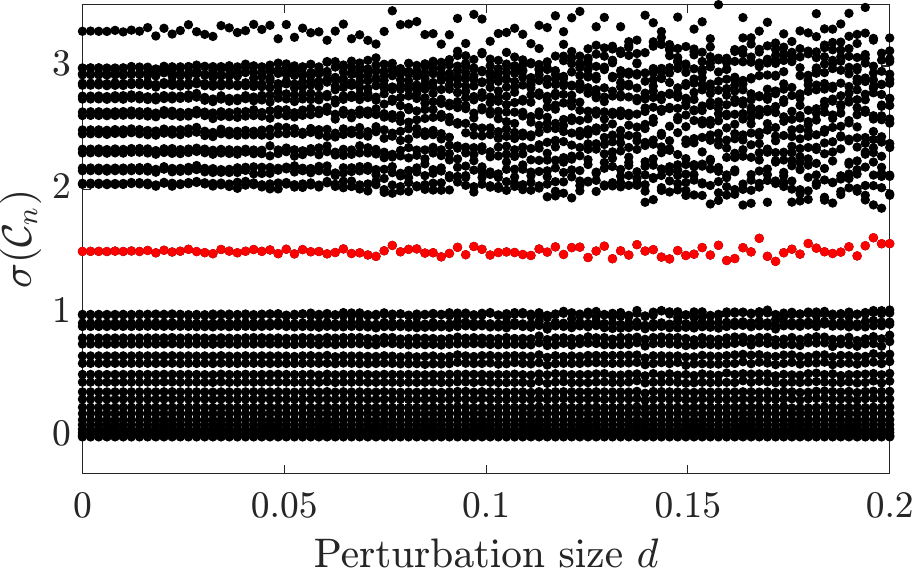}}\quad
    \subfloat[][Computations performed for $s_1 = 1, s_2 = 2$.]%
    {\includegraphics[width=0.45\linewidth]{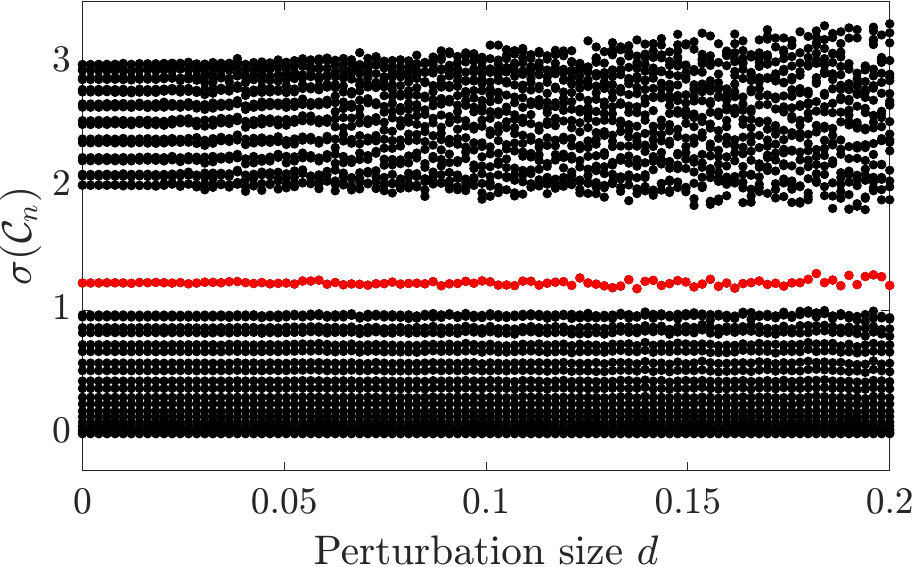}}
    \caption{Both structures exhibit the same stability with resect to perturbations in the resonator spacings, which is due to Weyl's Theorem of spectral stability \cite[Proposition 9]{ammari2024exponentiallypreprint}. The spectrum is shown for each perturbation size.}
    \label{Fig: Stability}
\end{figure}

Comparing with the discrete lattice model studied above reveals some crucial differences. It has been consistently shown in~\cite{10.1098/rspa.2022.0675} that impedance-matched interfaces, which are topologically robust in the continuum setting~\cite{10.1098/rspa.2023.0533}, lose this robustness in the discrete case. On the other hand, edge modes in semi-infinite structures are topologically protected in the discrete setting under mild symmetry assumptions (see Theorem~\ref{thm: inversion symmetry topological}). In contrast, one-sided semi-infinite mirror-symmetric photonic crystals are known not to possess edge 
modes~\cite{10.1098/rspa.2023.0533}.

\section{Connection between Continuum and Discrete models}\label{sec: Continuum Limit}
Having outlined the key differences between the discrete and continuum settings, we now relate the two by considering the continuum limit of a finite difference discretisation. We will study the same differential operator as in Section \ref{Sec: resonator chains} that is \eqref{eq: differential operator}. The discretised operator is denoted by $\mathbf{L}_{\Delta x}$, for a uniform step size $\Delta x = 1/k$ within the unit cell, it holds that
\begin{equation}
    -\frac{1}{\mu_0}\frac{\partial}{\partial x} \left( \frac{1}{\varepsilon(x)} \frac{\partial u}{\partial x}\right) \approx \mathbf{L}_{\Delta x} \mathbf{u}. 
\end{equation}
The permittivity $\varepsilon \colon \R \to \R$  is assumed in this section to be real-valued and piecewise constant in the resonator domain $D := \bigcup_{i=1}^N D_i$ as in \eqref{eq: piecewise permittivity}. 
The finite difference matrix $\mathbf{L}_{\Delta x} \in \R^{k\times k}$
is extended in space through the Floquet-Bloch quasiperiodic condition which introduces a twist in the antidiagonal corner entries, resulting in a symbol $f(z) \in \mathbb{C}^{k \times k}$ of the form \eqref{def: symbol of operator}.
In this setting, the leading principal submatrix $\mathbf{B}_0$ of $f(z)$ as  defined in \eqref{def: B0} can be interpreted as a discretisation of the unit cell, with one grid point of size $\Delta x$ omitted. Therefore, in the continuum limit $\Delta x \to 0$,
\begin{equation}
    \sigma(\mathbf{B}_0) \to \sigma\big(f(\mathbb{T})\big),
\end{equation}
in the Hausdorff sense, indicating that edge modes vanish in the continuum limit. This behaviour is rigorously established in \cite[Theorem 4.4]{10.1098/rspa.2023.0533} and numerically observed for relatively small $k$ in Figure \ref{Fig: Continuum Limit 1}: for any small $\Delta x$, the discrete problem admits edge states while the continuous problem with Dirichlet boundary conditions lack edge states.  

Furthermore, as we have seen in Figure \ref{Fig: Not topological}, the frequency-matched interface modes are not topologically protected in the discrete model. This is due to the fact that the roots of $F(\omega^2)$ defined in \eqref{eq: impedance analogue} may disappear under finite perturbations of the interface coupling parameters $\eta$ and $q$.
However, in the continuum limit $\Delta x \to 0$, the function $F(\omega^2)$ diverges as $\omega^2$ approaches the essential spectrum. This is in agreement with the surface-impedance function studied in \cite{10.1098/rspa.2023.0533}. Consequently, finite perturbations in $\eta$ and $q$ do not affect the existence of interface states, as a root to $F(\omega^2)$ is guaranteed by the intermediate value theorem. Hence, frequency-matched interface states become topologically protected in the continuum limit. This phenomenon is illustrated numerically in Figure \ref{Fig: Continuum Limit 1} and has been systematically studied in \cite{10.1098/rspa.2023.0533, 10.1098/rspa.2022.0675, alexopoulos2024topologicalinterfacemodessystems}.
\begin{figure}[htb]
    \centering
    \subfloat[][Bandgap between the first two spectral bands.]%
    {\vspace{0.7mm}\includegraphics[width=0.45\linewidth]{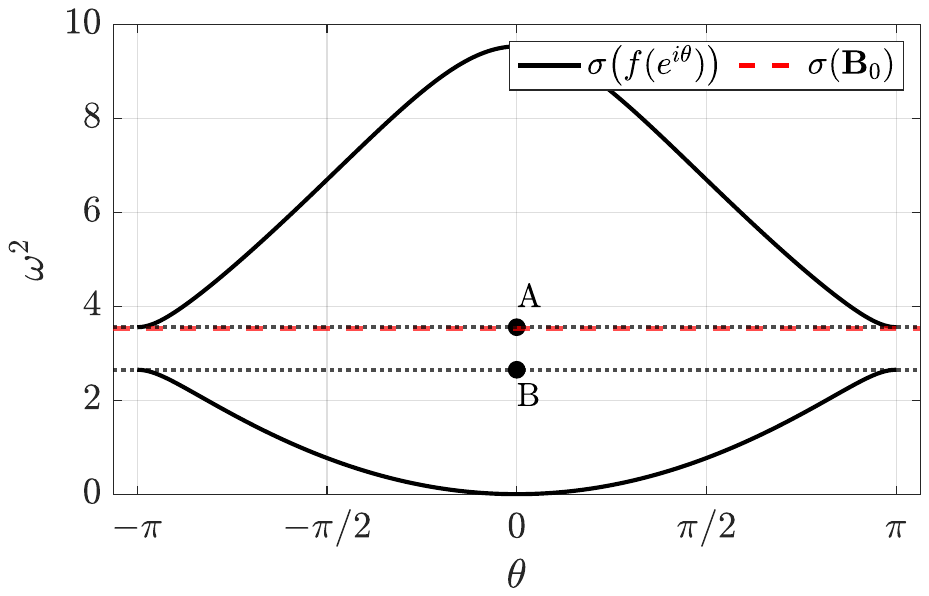}}\quad
    \subfloat[][Frequency function $F(\omega^2)$ over the first bandgap.]%
    {\includegraphics[width=0.48\linewidth]{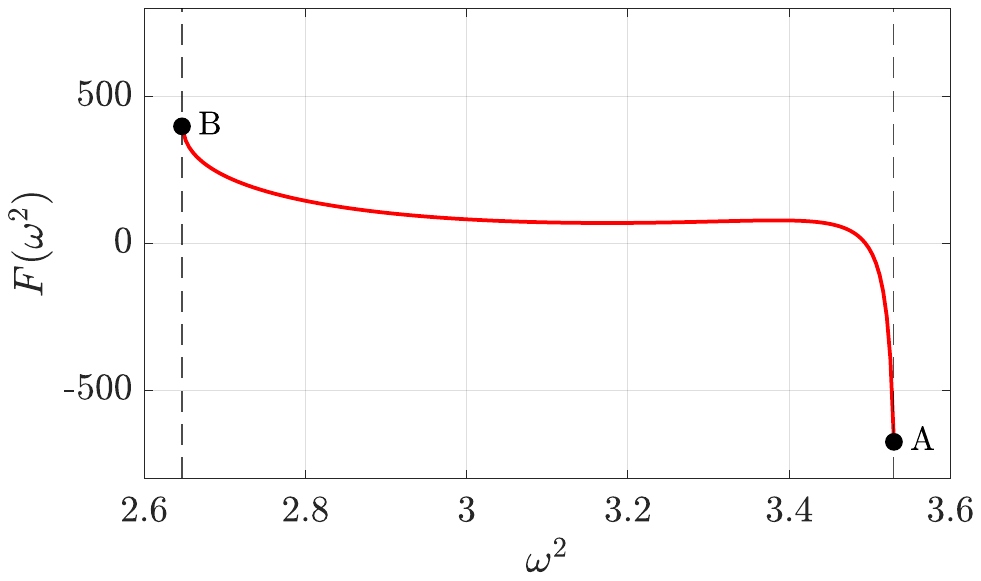}}
    \caption{Symbol function for the finite difference method $f(z)$. Computation performed for a dimer chain and using $k = 20$ discretisation points. The edge frequencies $\sigma(\mathbf{B}_0)$, asymptotically vanish into the essential spectrum as $k\to \infty$, indicating that there are no Dirichlet edge modes in the continuum \cite[Theorem 4.4.]{10.1098/rspa.2023.0533}. Contrary to the lattice model depicted in Figure \ref{Fig: Impedance matched}, in the continuum limit the branches of $F(\omega^2)$ diverge to $\pm \infty$ at $A$ and $B$ respectively. The intermediate value theorem therefore guarantees the existence of a topologically protected interface at $F(\omega^2) = 0$ in the continuum limit as described in \cite{10.1098/rspa.2023.0533}.}
    \label{Fig: Continuum Limit 1}
\end{figure}

\section{Aperiodic tight binding interface Hamiltonian}\label{Sec: Aperiodic Dimers}
Coburn's lemma reveals that the zero-energy interface mode supported by a $2k$-periodic SSH structure is immune to strong disorder.  Unlike Weyl's theorem on spectral stability, which only guarantees eigenvalue shifts bounded by the operator norm of the perturbation (see Figure \ref{Fig: Stability}), we show that the interface eigenvalue remains exactly unchanged under disorder and arbitrary perturbations. We further investigate the asymptotic localisation properties of this interface mode. Heuristically, the localisation strength is expected not to change, since the interface eigenvalue does not change relatively in the bandgap \cite{debruijn2024complexbandstructuresubwavelength, debruijn2025complexbrillouinzonelocalised}. However, the perturbation considered here is global rather than compact and, therefore, it is not immediately clear how the bandgap itself is affected by the disorder.
Concretely, we consider a structure in which the hopping amplitudes are perturbed, modelling imperfections such as manufacturing tolerances or material defects that arise in practical physical systems. The resulting aperiodic coupling between the resonators is illustrated in Figure~\ref{fig: Mirrored SSH Chain}.

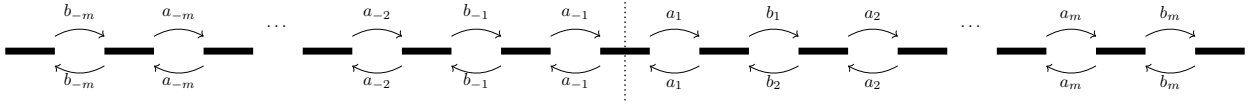
\begin{figure}[h!]
    \centering
    \begin{adjustbox}{width=\textwidth}
        \begin{tikzpicture}
        \draw[-,thick,dotted] (-.5,-1) -- (-.5,1);
        \draw[->] (2,0.3) to[bend left] (3,0.3);
        \draw[<-] (2,-0.3) to[bend right] (3,-0.3);
        \node[above] at (2.5,0.5)  {$b_1$};
        \node[above] at (2.5,-0.9) {$b_2$};
        
        \draw[line width=4pt]  (1,0) -- (2,0);

        \draw[->] (0,0.3)  to[bend left]  (1,0.3);
        \draw[<-] (0,-0.3) to[bend right] (1,-0.3);
        \node[above] at (0.5,0.5)  {$a_1$};
        \node[above] at (0.5,-0.9) {$a_1$};

        \draw[line width=4pt]  (3,0) -- (4,0);

        \draw[->] (4,0.3) to[bend left] (5,0.3);
        \draw[<-] (4,-0.3) to[bend right] (5,-0.3);
        \node[above] at (4.5,0.5) {$a_2$};
        \node[above] at (4.5,-0.9) {$a_2$};

        \begin{scope}[shift={(+4,0)}]
        
        \draw[line width=4pt] (1,0) -- (2,0);

        \node at (2.5,.5) {\dots};
        \end{scope}

        \begin{scope}[shift={(+6,0)}]
        \draw[line width=4pt]  (1,0) -- (2,0);
        
        \draw[->] (2,0.3) to[bend left] (3,0.3);
        \draw[<-] (2,-0.3) to[bend right] (3,-0.3);
        \node[above] at (2.5,0.5) {$a_m$};
        \node[above] at (2.5,-0.9) {$a_m$};
        
        \draw[line width=4pt]  (3,0) -- (4,0);

\begin{scope}[shift={(+2,0)}]
        \draw[->] (2,0.3) to[bend left] (3,0.3);
        \draw[<-] (2,-0.3) to[bend right] (3,-0.3);
        \node[above] at (2.5,0.5)  {$b_m$};
        \node[above] at (2.5,-0.9) {$b_m$};
        
        \draw[line width=4pt]  (3,0) -- (4,0);
  
        \end{scope}
        \end{scope}
        
\begin{scope}[shift={(-4,0)}]

        \draw[->] (2,0.3) to[bend left] (3,0.3);
        \draw[<-] (2,-0.3) to[bend right] (3,-0.3);
        \node[above] at (2.5,0.5) {$a_{-1}$};
        \node[above] at (2.5,-0.9) {$a_{-1}$};

        \draw[->] (0,0.3) to[bend left] (1,0.3);
        \draw[<-] (0,-0.3) to[bend right] (1,-0.3);
        \node[above] at (0.5,0.5) {$b_{-1}$};
        \node[above] at (0.5,-0.9) {$b_{-1}$};
        
        \draw[line width=4pt]  (1,0) -- (2,0);

        \draw[line width=4pt]  (3,0) -- (4,0);

        \end{scope}
        
        \begin{scope}[shift={(-8,0)}]
        \node at (.5,.5) {\dots};
        \draw[line width=4pt]  (1,0) -- (2,0);
        
        \draw[->] (2,0.3) to[bend left] (3,0.3);
        \draw[<-] (2,-0.3) to[bend right] (3,-0.3);
        \node[above] at (2.5,0.5) {$a_{-2}$};
        \node[above] at (2.5,-0.9) {$a_{-2}$};
        
        \draw[line width=4pt]  (3,0) -- (4,0);
        
        \end{scope}

        \begin{scope}[shift={(-14,0)}]
        \draw[line width=4pt]  (1,0) -- (2,0);

        \draw[->] (2,0.3) to[bend left] (3,0.3);
        \draw[<-] (2,-0.3) to[bend right] (3,-0.3);
        \node[above] at (2.5,0.5) {$b_{-m}$};
        \node[above] at (2.5,-0.9) {$b_{-m}$};
        
        \draw[line width=4pt]  (3,0) -- (4,0);

\begin{scope}[shift={(+2,0)}]
        \draw[->] (2,0.3) to[bend left] (3,0.3);
        \draw[<-] (2,-0.3) to[bend right] (3,-0.3);
        \node[above] at (2.5,0.5) {$a_{-m}$};
        \node[above] at (2.5,-0.9) {$a_{-m}$};
        
        \draw[line width=4pt]  (3,0) -- (4,0);

        \end{scope}
        \end{scope}
        
        \end{tikzpicture}
    \end{adjustbox}
    \caption{SSH chain with an interface in the middle and aperiodic coupling between the resonators.}
    \label{fig: Mirrored SSH Chain}
\end{figure}

The tight binding Hamiltonian corresponding to the structure is tridiagonal, symmetric with aperiodic off-diagonal coefficients. The symbol function, which varies between unit cells and is defined for the $i$-th cell, is
\begin{equation}
    f_i(z) = \begin{pmatrix}
        0 & a_i + b_iz\\
        a_i + b_i/z & 0
    \end{pmatrix}.
\end{equation}
Although the essential spectrum~\eqref{eq: essential spectrum} associated to each symbol function $f_i$ may vary between unit cells, the leading sub-matrix $\mathbf{B}_0 = 0$ remains unchanged. For $\lambda \in \sigma(\mathbf{B}_0) = \{0\}$, Theorem \ref{cor: coburn eigenvector} implies that the normalised Bloch eigenvector for any symbol function $f_i(z_i) \in \mathbb{C}^{2 \times 2}$ is given by
\begin{equation}\label{eq: aperiodic bloch vector}
    \mathbf{v} = \begin{pmatrix}
        1 \\ 0
    \end{pmatrix}, \quad \forall\, i \in \mathbb{Z}.
\end{equation}
We now construct an eigenvector of the semi infinite Toeplitz operator generated by the symbol functions $f_i(z)$. The Bloch eigenvector associated with $\lambda\in\sigma(\mathbf{B}_0)=\{0\}$ given by \eqref{eq: aperiodic bloch vector} is aperiodically extended such that by Theorem \ref{cor: coburn eigenvector} the following relation is satisfied.

\begin{equation}\label{eq: aperiodic approximation}
\scalebox{0.68}{$
\begin{pNiceMatrix}[columns-width=1.5em, cell-space-limits=0.15em]
 
\Block[draw,rounded-corners]{2-2}{}
0 & a_{0} &
\Block[draw,rounded-corners]{2-2}{}
0 & 0 &  &  \\
a_{0} & 0 & b_{1} & 0 &  &  \\
\Block[draw,rounded-corners]{2-2}{}
0 & b_{0} &
\Block[draw,rounded-corners]{2-2}{}
0 & a_{1} &
\Block[draw,rounded-corners]{2-2}{}
0 & 0 \\
0 & 0 & a_{1} & 0 & b_{2} & 0 \\
  &  &
\Block[draw,rounded-corners]{2-2}{}
0 & b_{1} &
\Block[draw,rounded-corners]{2-2}{}
0 & a_{2} \\
  & & 0 & 0 & a_{2} & 0 & \smash{\ddots} \\
 & &   &   &       & {\ddots} & {\ddots}
\end{pNiceMatrix}$} \scalebox{0.7}{$ 
 \left(\begin{array}{c}
    \phantom{-}z_{0}\begin{pmatrix}
        1 \\ 0 \end{pmatrix} \\
    \phantom{-}z_{1}\begin{pmatrix}
        1 \\ 0 \end{pmatrix} \\
    \phantom{-}z_{2}\begin{pmatrix}
        1 \\ 0 \end{pmatrix} \\
        \vdots
\end{array}\right)$} = \scalebox{0.68}{$\begin{pNiceMatrix}[cell-space-limits=0.2em]
    0 \\
    a_{0} z_{0} + b_{1}z_{1} = 0\\
    0 \\
    a_{1} z_{1} + b_{2}z_{2} = 0\\
    0\\ 
    a_{2} z_{2} + b_{3}z_{3} = 0\\
    \vdots
\end{pNiceMatrix}$}.
\end{equation}
Consequently, it is straightforward to verify that the aperiodic extension coefficients $z_i$ can be computed recursively via
\begin{equation}
    z_i = -\frac{a_{i-1}z_{i-1}}{b_i}, \quad \forall i \in \N.
\end{equation}
By normalisation, we assume without loss of generality that $z_0 = 1$. Then it is not hard to see that
\begin{equation}\label{eq: random process}
    z_i= (-1)^i \prod_{k=0}^i \frac{a_k}{b_{k+1}}, \quad \text{ for } i > 0.
\end{equation}
In a similar fashion as in Theorem \ref{thm: refexion symmetric impedance matching} we will demonstrate that an interface mode associated to a structure such as in Figure \ref{fig: Mirrored SSH Chain} may be constructed using the eigenvectors of the doubly infinite Laurent operator given  by \eqref{eq: aperiodic approximation}.

\begin{theorem}\label{thm: disordered SSH interface}
  Let $\mathbf{u}$ and $\mathbf{s}$ be eigenvectors of the doubly infinite left and right structures, respectively. Then the composite material merged at an interface supports an interface eigenmode at the eigenvalue $\lambda = 0$, with corresponding eigenvector $\mathbf{w}$, which is given by,
    \begin{equation}
        \mathbf{w}_i = \begin{cases}
            a\vect{s}_{|i|},~&i < 0,\\
            0, ~&i = 0,\\
            \mathbf{u}_{i}, &i > 0.
        \end{cases}
    \end{equation}
\end{theorem}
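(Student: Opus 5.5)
The plan is to mimic the proof of Theorem \ref{thm: spectral inclusion}: build the candidate vector $\mathbf{w}$ from the two one-sided zero-energy modes and check the eigenvalue equation row by row. Only the rows adjacent to the interface site $i=0$ are not already settled by the bulk computation \eqref{eq: aperiodic approximation}. First, for the right structure I would take $\mathbf{u}$ to be the aperiodic extension of the Bloch vector $(1,0)^\top$ from \eqref{eq: aperiodic bloch vector}, namely $\mathbf{u} = (z_0(1,0), z_1(1,0), \dots)$ with $z_i$ as in \eqref{eq: random process}. The computation \eqref{eq: aperiodic approximation} shows that every row of the right block annihilates $\mathbf{u}$: the odd rows vanish because the second components are zero, and the even rows vanish by the recursion $a_{i-1}z_{i-1}+b_iz_i=0$. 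Since the diagonal is zero, this gives $\lambda=0$. Second, for the left structure I would apply the reflection argument of Lemma \ref{lemma: eigenvector reflection} to the mirrored (but disordered) left chain and obtain $\mathbf{s}$ with the analogous recursion in the coefficients $a_{-i},b_{-i}$. Thus $\mathbf{R}\mathbf{s}$ is annihilated by every row of the left block that does not touch the interface.

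Next I would write out the three rows around the interface, as in the displayed residual in the proof of Theorem \ref{thm: spectral inclusion}. Call the interface diagonal entry $\eta$; it is $0$ in this tight-binding model. Call the couplings of site $0$ to sites $\pm1$ $q_\pm$; from Figure \ref{fig: Mirrored SSH Chain} these are the first hoppings of each half. With $\mathbf{w}_0=0$, the rows $\pm1$ have the same form as in the half-infinite problem, where the neighbour outside the chain is absent. Because the first entries of $\mathbf{u}$ and $\mathbf{s}$ sit on the "$1$" components of the Bloch vector and their partners are zero, these rows vanish. The only remaining condition is row $0$:
\begin{equation*}
q_-\,a\,\mathbf{s}_1 + \eta\cdot 0 + q_+\,\mathbf{u}_1 = 0 .
\end{equation*}
I would meet this by choosing $a = -q_+\mathbf{u}_1/(q_-\mathbf{s}_1)$, which is well defined because $\mathbf{u}_1=\mathbf{s}_1=z_0=1\neq 0$. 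In the mirror-symmetric case $q_+=q_-$ this reduces to $a=-1$, the dipole structure of Theorem \ref{thm: spectral inclusion}.

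Finally, I would show that $\mathbf{w}$ is a genuine interface mode, i.e. that it lies in $\ell^2$ and decays into both bulks. Here the products $z_i=(-1)^i\prod a_k/b_{k+1}$ are no longer geometric. I would impose, as the SSH/Arrangement $B$ analogue of $|z(\lambda)|>1$ in Theorem \ref{cor: coburn eigenvector}(ii), a uniform condition such as $\sup_k|a_k/b_{k+1}|\le\rho<1$ on both sides. Under that condition $|z_i|\le\rho^i$, which gives exponential decay. For random couplings, an alternative is $\mathbb{E}\log|a_k/b_{k+1}|<0$ together with the law of large numbers, which gives almost-sure decay.

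I expect this decay step to be the main obstacle, because the statement does not spell out the hypothesis on the disorder. The algebraic verification is routine; the substance of the result is that $\lambda=0$ stays exactly fixed because $\mathbf{B}_0=0$ in every cell, independent of the disorder.
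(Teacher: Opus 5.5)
Your proposal is correct and follows the paper's proof: you use the same construction from the zero-energy Bloch vector $(1,0)^\top$, the rows $\pm1$ vanish because the partner entries are zero (the paper phrases this as $\mathbf{u}_{-1}=\mathbf{s}_{-1}=0$ forcing $t=0$), and $a$ is fixed by the interface row, where your $a=-q_+\mathbf{u}_1/(q_-\mathbf{s}_1)$ is the correctly oriented version of the paper's displayed formula (which appears inverted). Your extra decay hypothesis, either a uniform bound $\sup_k|a_k/b_{k+1}|<1$ or a law-of-large-numbers condition, covers a point the paper's proof leaves implicit and only treats afterwards in Proposition~\ref{prop: aperiodic decay rate 2k}.
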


\begin{proof}
    Suppose that $\mathbf{L}_A$ is a doubly infinite aperiodic dimer structure with zero on the diagonal such that $\mathbf{L}_A\mathbf{u} = 0$ which may be constructed as in \eqref{eq: aperiodic approximation} and let $\mathbf{A}$ be the truncated semi-infinite operator and $(\mathbf{u}_1, \mathbf{u}_2, \dots)^\top$ be the truncated doubly infinite vector $\mathbf{u}$. A similar construction may be carried out for $\mathbf{B}$, since $\mathbf{B}$ in general does not need to be a reflection of $\mathbf{A}$, the truncated eigenvector $(\mathbf{s}_1, \mathbf{s}_2, \dots)^\top$ also differs from $(\mathbf{u}_1, \mathbf{u}_2, \dots)^\top$. The matching condition at the interface may now be achieved as follows,
    \begin{equation}\label{eq: Dimer SSH matching}
         \scalebox{0.8}{$\left[
        \begin{pNiceMatrix}[columns-width=0em]
        \Block[draw,fill=blue!31,rounded-corners]{3-3}{} & & & &  \\
        & \mathbf{B} & & &  \\
        & &  & q &  \\
        & & q &  0 & p & & \\
        & & & p & \Block[draw,fill=red!31,rounded-corners]{3-3}{} & &  \\
        & & & & & \mathbf{A} &  \\
        & & & & & &  
        \end{pNiceMatrix} - \lambda \mathrm{Id} \right]$}\scalebox{0.7}{$
        \begin{pNiceMatrix}[columns-width=0em]
        a{\color{blue!80}\begin{pmatrix}
            \vdots \\
            \vect{s}_2 \\
            \vect{s}_1
        \end{pmatrix}}\\
        t\\
        {\color{red!80}\begin{pmatrix}
            \vect{u}_1 \\
            \vect{u}_2\\
            \vdots
        \end{pmatrix}}
        \end{pNiceMatrix}$} = \scalebox{0.7}{$\begin{pmatrix}
            \vdots \\
            0 \\
            qt - a\vect{s}_{-1} \\
            aq\vect{s}_1 +  q\vect{u}_1\\
            qt - \vect{u}_{-1} \\
            0 \\
            \vdots
        \end{pmatrix}$}
    \end{equation}
    The Bloch eigenvector in \eqref{eq: aperiodic bloch vector} leads to $\mathbf{u}_{-1} = z_{-1}\mathbf{v}_2 = 0$, and by an identical reasoning we also obtain $\mathbf{s}_{-1} = 0$. Consequently, it follows directly that $t = 0$. The middle equation in the residual \eqref{eq: Dimer SSH matching} is satisfied provided that
    \begin{equation}
        a = -\frac{q\vect{s}_{1}}{p\vect{u}_{1}},
    \end{equation}
    which completes the proof.
\end{proof}

We will now proceed to illustrate the asymptotic behaviour of the interface eigenmode. Note that in the fully  periodic case, that is, $a_i = a$ and $b_i =  b$, the aperiodic extension factors reduce to
\begin{equation}
    z_i = (-1)^i \prod_{k = 0}^i\frac{a_i}{b_{i+1}} =  (-1)^i \prod_{k = 0}^i\frac{a}{b} = \left(-\frac{a}{b}\right)^i
\end{equation}
Interface modes are characterised by being exponentially localised around the interface, requiring $|z_i| < 1$. It follows that any $\lambda \in \sigma(\mathbf{B}_0)$ supporting such a mode must lie within the asymptotic spectral gap.
Although the structure in~\eqref{eq: aperiodic approximation} is arranged in a $2 \times 2$-block form, the underlying Laurent operator need not be $2$-periodic, as the symbol functions $f_i$ are arbitrary.
Nevertheless, only even-periodic systems support exponentially localised interface modes.
Consider the diagonal matrix $\mathbf{D} = \operatorname{diag}(1, -1, 1, -1, \dots)$, which satisfies $\mathbf{D}^{-1} = \mathbf{D}$. The chiral symmetry
\begin{equation}
    \mathbf{D}\mathbf{A}\mathbf{D}^{-1} = \mathbf{D}\mathbf{A}\mathbf{D} = -\mathbf{A}
\end{equation}
immediately yields $\sigma(\mathbf{A}) = \sigma(-\mathbf{A})$, so the spectrum is symmetric about the origin. Since a $k$-Toeplitz operator has an essential spectrum consisting of $k$ bands, this symmetry forces the bands to appear in reflected pairs. When $k$ is odd, one band must remain unpaired and hence contain the origin.
Therefore, for odd valued $k$, $\lambda \in \sigma(\mathbf{B}_0)$ will be situated within the essential spectrum and the associated interface mode will not be exponentially localised.
Let us therefore assume for the remainder of the section that the structure is $2k$-periodic, such that
\begin{equation}\label{eq: 2k perturbed coefficients}
\scalebox{0.9}{$
    \begin{aligned}
        a_i &= a_1(1 + \varepsilon_{1_i}),\quad \varepsilon_{1_i} \sim \mathcal{U}(-d, d), \quad\dots, \quad  a_{i+k-2} &= a_k(1 + \varepsilon_{k_i}),\quad \varepsilon_{k_i} \sim \mathcal{U}(-d, d) \\
        b_i &= b_1(1 + \tau_{1_i}),\quad \tau_{1_i} \sim \mathcal{U}(-d, d),\quad \dots,\quad b_{i+k-2} &= b_k(1 + \tau_{k_i}),\quad \tau_{k_i} \sim \mathcal{U}(-d, d)\\
    \end{aligned}$}
\end{equation}
The uniform perturbations are chosen in such a way that they preserve the sign of the couplings. In Theorem \ref{thm: disordered SSH interface} we demonstrate that the interface eigenvalue $\lambda \in \sigma(\mathbf{B}_0) = \{0\}$ is robust to disorder, we will therefore venture onwards and characterise the eigenvector asymptotics in the presence of disorder.
We will now proceed to study the localisation strength of the interface eigenmode within the disordered system.
For a $2k$-periodic structure, we denote $\tilde{z}_i$ the multiplicative process over $k$ dimer cells so that it holds
\begin{equation}\label{eq: multiplicative k process}
    \tilde{z}_i = \prod_{j=0}^{ki}\frac{a_i}{b_{i+1}}.
\end{equation}

\begin{proposition}\label{prop: aperiodic decay rate 2k}
    Let $\tilde{z}_i$ be the $2k$-multiplicative process defined in \eqref{eq: multiplicative k process} with perturbed coupling strengths as in \eqref{eq: 2k perturbed coefficients}. Then the process $\tilde{z}_i$ almost surely decays exponentially, that is
    \begin{equation}
        \lim_{i\to\infty} \frac{1}{i}\ln(|\tilde{z}_i|) = \ln\left(\left|\frac{a_1\dots a_k}{b_1 \dots b_k}\right|\right) \quad\text{a.s}.
    \end{equation}
\end{proposition}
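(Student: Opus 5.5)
The plan is to take logarithms, which turns the multiplicative process into a sum of independent terms, and then apply the strong law of large numbers. Reading \eqref{eq: multiplicative k process} as the product of the $ki$ consecutive dimer ratios $a_j/b_{j+1}$, i.e.\ of $i$ full blocks of $k$ dimer cells, we group the factors block-wise. The $m$-th block contributes
\begin{equation*}
    X_m := \sum_{l=1}^{k}\Big(\ln|a_l| + \ln|1+\varepsilon_{l_m}| - \ln|b_l| - \ln|1+\tau_{l_m}|\Big),
\end{equation*}
so that $\ln|\tilde z_i| = \sum_{m=1}^{i} X_m$, up to a bounded boundary term coming from index shifts at the start or end of the product. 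That term is harmless after dividing by $i$.

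By \eqref{eq: 2k perturbed coefficients} the perturbations $\varepsilon_{l_m},\tau_{l_m}$ are independent and identically distributed uniform variables across blocks $m$. Hence the $X_m$ are i.i.d. We assume $d<1$, as the paper does implicitly so that the signs of the couplings are preserved. Then $\ln|1+\varepsilon|$ is a bounded random variable, so $X_m\in L^1$. The strong law of large numbers gives
\begin{equation*}
    \frac1i\ln|\tilde z_i| \to \mathbb{E}[X_1] = \ln\Big|\frac{a_1\cdots a_k}{b_1\cdots b_k}\Big| + \sum_{l=1}^k\Big(\mathbb{E}\ln(1+\varepsilon_{l_1}) - \mathbb{E}\ln(1+\tau_{l_1})\Big) \quad \text{a.s.}
\end{equation*}

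The key remaining step is to show that the correction sum vanishes. Since $\varepsilon_{l_1}$ and $\tau_{l_1}$ have the same distribution $\mathcal U(-d,d)$, we get $\mathbb{E}\ln(1+\varepsilon)=\mathbb{E}\ln(1+\tau)$, and the sum cancels term by term. Note that each expectation is individually nonzero: it equals $\frac{1}{2d}\big[(1+d)\ln(1+d)-(1-d)\ln(1-d)\big]-1<0$. The cancellation therefore relies on $a$ and $b$ being perturbed by the same law, and this is exactly where the hypothesis is used. This yields the claimed limit.

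The main obstacle is the bookkeeping rather than the probability. The indices in \eqref{eq: multiplicative k process} are sloppy, so the product must be made precise, and we must confirm that each random variable appears exactly once in a numerator or denominator. The shift $b_{j+1}$ versus $a_j$ means the denominators are offset by one cell, which produces at most one unmatched factor at each end. Such a factor is bounded in logarithm (using $d<1$ and nonzero couplings), so it contributes $O(1/i)$. Exponential decay, meaning a negative limit, then follows whenever $|a_1\cdots a_k|<|b_1\cdots b_k|$, i.e.\ when the unperturbed $\lambda=0$ lies in the gap as in the periodic case.
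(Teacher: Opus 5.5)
Your proof is correct and follows the same route as the paper. The paper takes logarithms to split off the deterministic term $i\ln\left|\frac{a_1\cdots a_k}{b_1\cdots b_k}\right|$, applies the law of large numbers to the random sums, and cancels $\mathbb{E}\ln(1+\varepsilon)=\mathbb{E}\ln(1+\tau)$ because both perturbations share the law $\mathcal{U}(-d,d)$. Your block-wise i.i.d. grouping, the integrability condition $d<1$, and the boundary-term bookkeeping make precise the independence and indexing that the paper leaves implicit.
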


\begin{proof}
    From the definition of the multiplicative process $\tilde{z}_i$ in \eqref{eq: multiplicative k process} it follows that,
    \begin{equation}\label{eq: process asymp}
        \ln(|\tilde{z}_i|) = i\ln\left(\left|\frac{a_1\dots a_k}{b_1 \dots b_k}\right|\right) +\sum_{t=1}^k\left( \sum_{j = 0}^i\Big(\ln(1 + \varepsilon_{t_k}) - \ln(1+\tau_{t_k})\Big)\right).
    \end{equation}
    By the law of large numbers it holds that 
    \begin{equation}
        \frac{1}{i}\sum_{j = 0}^i\ln(1 + \varepsilon_{t_k}) \to \mathbb{E}\big[\ln(1+\varepsilon)\big]\quad \text{and}\quad
        \frac{1}{i}\sum_{j = 0}^i\ln(1 + \tau_{t_k}) \to \mathbb{E}\big[\ln(1+\tau)\big].
    \end{equation}
    Since the random variables $\varepsilon$ and $\tau$ follow the same distribution $\mathcal{U}(-d, d)$ their expectations agree. Therefore, as $i$ becomes large it holds that
    \begin{equation}
        \frac{1}{i}\ln(|\tilde{z}_i|) \to \ln\left(\left|\frac{a_1\dots a_k}{b_1 \dots b_k}\right|\right) \quad\text{almost surely,}
    \end{equation}
    which completes the proof.
\end{proof}

We will conclude this section by numerically illustrating our results. In Figure \ref{Fig: Disorder Spectrum} we illustrate the robustness of the interface eigenvalue to disorder, both in the case of real and complex symmetric coupling strengths. The robustness of the localisation properties is illustrated in Figure \ref{Fig: Disorder and decay rate}.

\begin{figure}[htb]
    \centering
    \subfloat[][Computation performed for $a = 1,~ b  = 2$ and $n = 99$.]%
    {\includegraphics[width=0.45\linewidth]{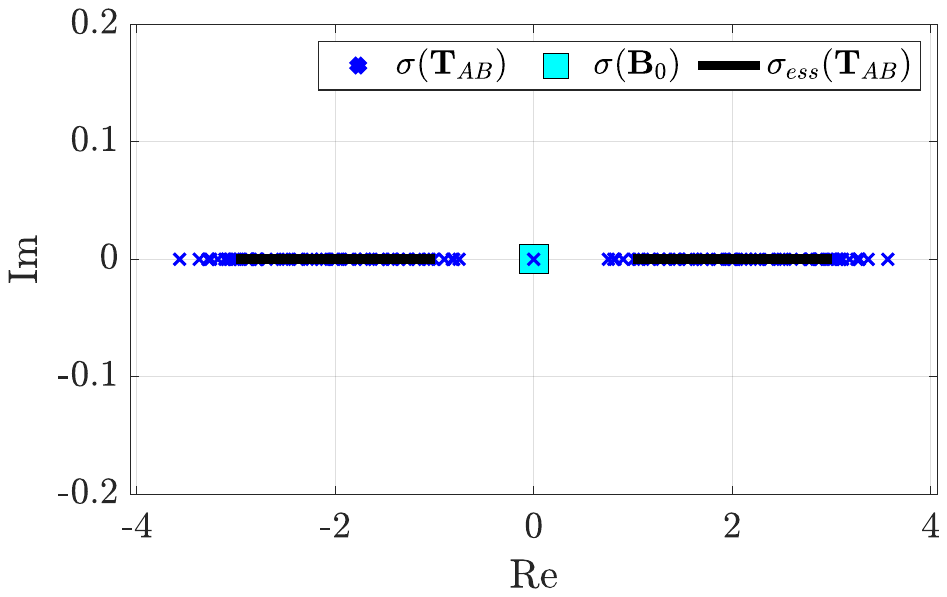}}\quad
    \subfloat[][Computation performed for $a = 1-0.3\i,~ b  = 2 + 1\i$ and $n = 99$.]%
    {\includegraphics[width=0.45\linewidth]{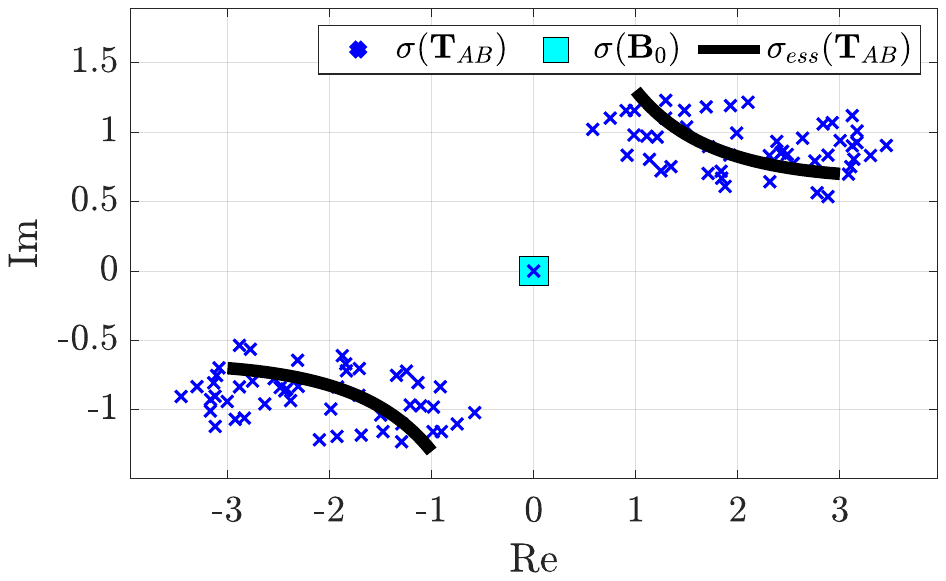}}
    \caption{As established in Theorem \ref{thm: disordered SSH interface}, interface eigenvalue is robust with respect to disorder and stays fixed exactly at $\lambda \in \sigma(\mathbf{B}_0)=\{0\}$. Computation performed for a disorder rate of $d = 0.4$.}
    \label{Fig: Disorder Spectrum}
\end{figure}

\begin{figure}[htb]
    \centering
    \subfloat[][Computation performed for $a = 1,~ b  = 2$ and $n = 39$.]%
    {\includegraphics[width=0.45\linewidth]{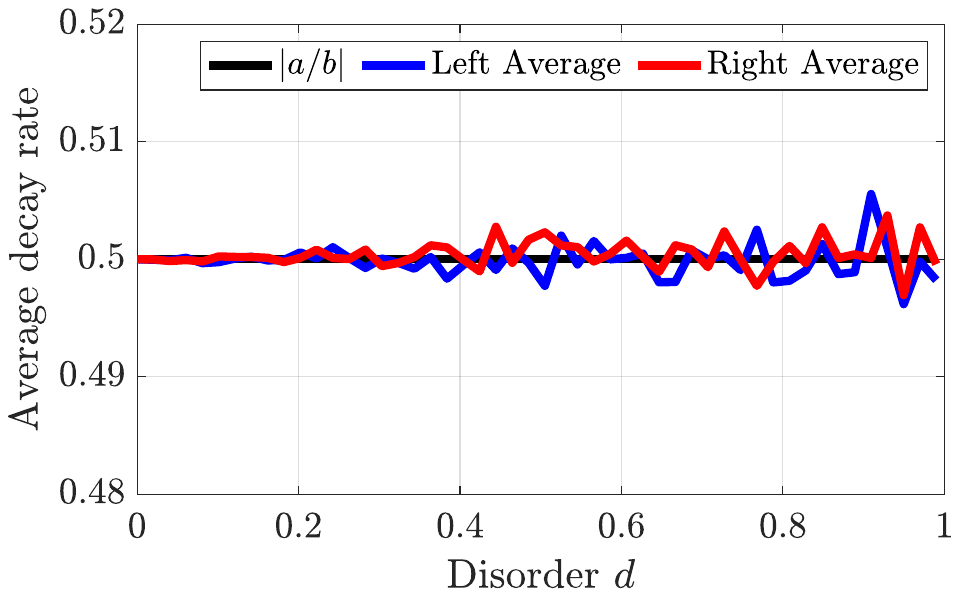}}\quad
    \subfloat[][Computation performed for $a = 1-0.3\i,~ b  = 2 + 1\i$ and $n = 39$.]%
    {\includegraphics[width=0.45\linewidth]{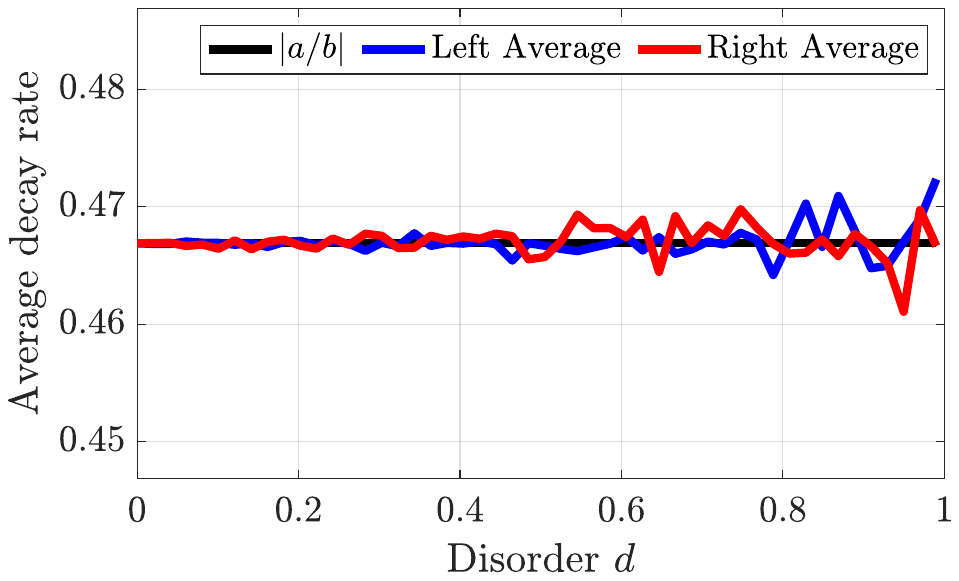}}
    \caption{As asserted by Proposition \ref{prop: aperiodic decay rate 2k}, the asymptotic decay rate is preserved under disorder. Average decay rate is computed over a total of $5000$ realisations.}
    \label{Fig: Disorder and decay rate}
\end{figure}

\begin{figure}[htb]
    \centering
    \subfloat[][Spectrum for a $4$-periodic coupling with a disorder of strength $d = 0.4$.]%
    {\includegraphics[width=0.45\linewidth]{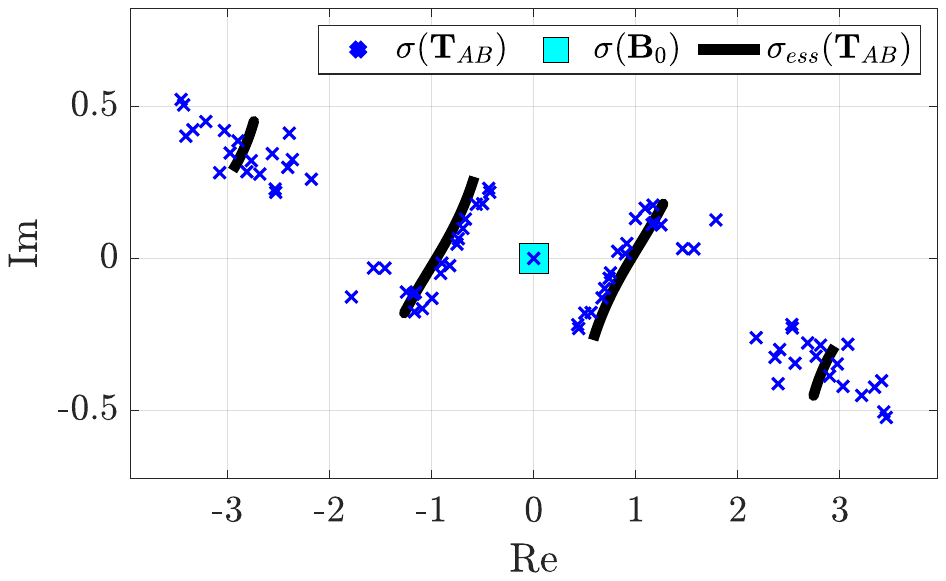}}\quad
    \subfloat[][Average of $1000$ realisations interface eigenmode localisation strengths at different disorder rates $d$.]%
    {\includegraphics[width=0.45\linewidth]{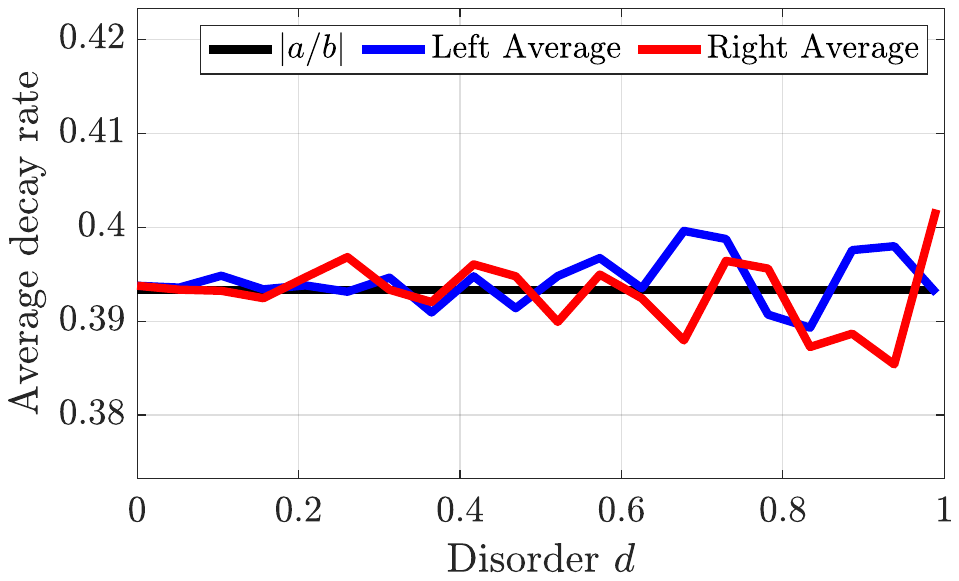}}
    \caption{Computation performed for  $a_1 = 2 + 0.4\i, b_1 = 0.5 + 0.3\i, a_3 = 1.3, b_2=1.8 - 0.2\i$ and $n = 79$.}
    \label{Fig: Disorder Spectrum and decay 4 Toeplitz}
\end{figure}

\section{Concluding Remarks}\label{Sec: discussion}
We have established a novel connection between Coburn’s Lemma for tridiagonal $k$-Toeplitz operators and the emergence of edge-induced interface modes. In this framework, the eigenvalues of the leading principal submatrix of the symbol function characterise the edge modes and can be computed explicitly with minimal effort. Since the result applies to general tridiagonal $k$-Toeplitz matrices, it may be interpreted as a form of bulk-boundary correspondence for multi-band lattice models.
We further identified conditions under which these edge modes are topologically protected, namely, when the symbol function exhibits local inversion symmetry. In addition, we presented a comprehensive analysis of tridiagonal interface operators, distinguishing between edge-induced modes and matched-eigenvalue interface modes.
These results were applied to damped subwavelength resonator chains and tight-binding Hamiltonians, demonstrating the existence of topologically protected interface modes. In particular, we establish their robustness under random perturbations and even under strong disorder.
Finally, the differences in interface states between continuum and lattice models were presented, and the transition in their behaviour from finite systems to the continuum limit was conceptually described.
As a future research direction, we plan to investigate the reality of the asymptotic open spectrum in the tridiagonal $k$-Toeplitz setting, in line with recent studies \cite{debruijn2026mathematicalfoundationgeneralisedbrillouin, giandinoto2024realityeigenvaluesbandedblock}.     

\section{Conflict of interest}
The authors have no competing interests to declare.

\section{Data availability} \label{Sec: Data availability}
The \texttt{Matlab} code for the numerical experiments developed in this work is openly available in the following repository:
\url{https://github.com/yannick2305/TopologicalInterfaces}.

\appendix
\section{Interface Toeplitz operator joined by coupling}\label{Sec: common coupling} Having discussed interface Toeplitz operators joined at a common interface, we now turn to the case where the building blocks $\mathbf{T}_A$ and $\mathbf{T}_B$ are coupled by a parameter $q$, as introduced in~\eqref{def: common coupling} below.
\begin{equation}\label{def: common coupling}
    \mathbf{T}_{AB} = \scalebox{0.8}{$\begin{pNiceMatrix}[columns-width=0em]
        \Block[draw,fill=blue!31,rounded-corners]{3-3}{} & & & &  \\
        & \mathbf{T}_B& & &  \\
        & &  & q &  \\
         & & q& \Block[draw,fill=red!31,rounded-corners]{3-3}{} & &  \\
         & & & & \mathbf{T}_A &  \\
         & & & & &  
        \end{pNiceMatrix}$}
\end{equation}
Matching the eigenvalues at the interface yields the following result.
\begin{theorem}\label{thm: common coupling}
    Suppose that $\mathbf{T}_{AB}$ is as in \eqref{def: common coupling}, let $f$ be the symbol function of $\mathbf{T}_A$ and let $\mathbf{v}$ be such that 
    \begin{equation}
        \Big[f\big(z(\lambda)\big)-\lambda \Id \Big]\mathbf{v} = 0,
    \end{equation}
    then $\lambda$ is an interface eigenvalue if
        \begin{equation}
        \frac{\mathbf{v}_2}{\mathbf{v}_1} = - \frac{aq + ({ \mathbf{T}_A})_{1,1} - \lambda}{({ \mathbf{T}_A})_{1,2}},\quad a = \pm 1.
    \end{equation}
    In this case, $\mathbf{T}_{AB}\vect{w} = \lambda \vect{w}$, where
    \begin{equation}
        \mathbf{w}_i = \begin{cases}
            a\left(\mathbf{R}\vect{u}\right)_{|i|}, & i \leq 0 \\
            \vect{u}_i, & i > 0
        \end{cases}
    \end{equation}
    where $\vect{u} = \big(z_1^0\vect{v}, z_1^1\vect{v}, z_1^2\vect{v}, \dots \big)$ is the quasiperiodically extended Bloch eigenvector.
    
\end{theorem}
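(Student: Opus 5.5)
The plan is the same direct Ansatz used in the proofs of Theorems \ref{thm: spectral inclusion} and \ref{thm: refexion symmetric impedance matching}. Take $z_1=z_1(\lambda)$ to be the root of smallest modulus of $\det\big(f(z)-\lambda\Id\big)=0$, with $|z_1|<1$. This is possible because $\lambda$ lies in the common gap $\C\setminus\sigma_{\mathrm{ess}}(\mathbf{T}_{AB})$. Form the quasiperiodic extension $\vect u=(z_1^0\vect v,z_1^1\vect v,\dots)$, which lies in $\ell^2$. Since $\big[f(z_1)-\lambda\Id\big]\vect v=0$, the vector $\vect u$ solves $(\mathbf{T}_A-\lambda)\vect u=0$ in every row except the first, by the same computation as in \eqref{eq: quas extension}. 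Indeed, every row of $\mathbf{T}_A$ other than the first agrees with the corresponding row of the doubly infinite Laurent operator $\mathbf{L}_A$, and $\mathbf{L}_A$ annihilates the two-sided extension. By Lemma \ref{lemma: eigenvector reflection} and the mirror symmetry $\mathbf{T}_B=\mathbf{R}\mathbf{T}_A\mathbf{R}$, the vector $\mathbf{R}\vect u$ likewise satisfies $(\mathbf{T}_B-\lambda)\mathbf{R}\vect u=0$ in every row except the one adjacent to the interface.

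With the Ansatz $\vect w=(a\mathbf{R}\vect u,\vect u)$, only the two rows on either side of the coupling $q$ leave a residual. The key step is writing them out. In the first row of the $A$ block, the equation reads
\[
q\,(\vect w)_{0}+\big((\mathbf{T}_A)_{1,1}-\lambda\big)\vect v_1+(\mathbf{T}_A)_{1,2}\vect v_2=0 .
\]
Here $(\vect w)_0=a(\mathbf{R}\vect u)_0=a\vect v_1$, namely the mirrored copy of the first entry. Since $\vect u_1=z_1^0\vect v_1$ and $\vect u_2=\vect v_2$, this becomes
\[
\big(aq+(\mathbf{T}_A)_{1,1}-\lambda\big)\vect v_1+(\mathbf{T}_A)_{1,2}\vect v_2=0 .
\]
Dividing by $\vect v_1$ (which must be nonzero, otherwise the two-term recurrence forces $\vect v=0$, as argued in the proof of Theorem \ref{thm: inversion symmetry topological}) gives exactly the stated condition. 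The row on the $B$ side is the mirror image of this one, multiplied by $a$, and reads
\[
a\Big[\big((\mathbf{T}_A)_{1,1}-\lambda\big)\vect v_1+(\mathbf{T}_A)_{1,2}\vect v_2\Big]+q\vect v_1=0 .
\]
It coincides with the first equation precisely when $q=a^2q$, that is, when $a^2=1$. This is why $a=\pm1$, and the two signs correspond to the monopole and dipole cases of Lemma \ref{lemma: parity}.

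Finally, $\vect w\in\ell^2$ and $\vect w$ decays exponentially in both directions at rate $|z_1|$, so $\lambda$ is an interface eigenvalue.

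I expect the main obstacle to be bookkeeping rather than any conceptual difficulty. One has to track the indexing convention at the interface, in particular which entry of $\mathbf{R}\vect u$ sits at index $0$ and how the cyclic unit-cell convention of $\mathbf{T}_B$ relative to $\mathbf{T}_A$ enters (cf. Lemma \ref{lemma: spectrum unit cell convention}). One must also justify carefully that the first row of $\mathbf{T}_A$ is the only row where $\vect u$ fails to satisfy the Laurent recurrence. The choice of $z_1$ with $|z_1|<1$, guaranteed by $\lambda\notin\Gamma=\sigma_{\mathrm{ess}}$ via Lemma \ref{lemma: open limit and essential spectrum}, is what ensures localisation.
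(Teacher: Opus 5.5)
Your proposal is correct and is essentially the paper's proof. It uses the same mirrored Ansatz $\mathbf{w}=(a\mathbf{R}\mathbf{u},\mathbf{u})$ and the same observation that only the two rows adjoining the coupling $q$ carry a residual, and comparing those two rows forces $a^2=1$ and yields the stated ratio $\mathbf{v}_2/\mathbf{v}_1$.

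One small correction: $\mathbf{v}_1\neq 0$ does not follow from the recurrence alone. The argument in the proof of Theorem \ref{thm: inversion symmetry topological} relies on $\mathbf{w}_1=\pm\mathbf{w}_k$, and $\mathbf{v}_1=0$ can occur when $\lambda\in\sigma(\mathbf{B}_1)$. You do not need the claim, however, since the hypothesis presupposes $\mathbf{v}_1\neq 0$, and for the ``if'' direction you only multiply the condition by $\mathbf{v}_1$ rather than divide by it.
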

\begin{proof}
    Let $\mathbf{u}$ be an eigenvector of the Laurent operator to the eigenvalue $\lambda$ such that $\mathbf{L}_A\mathbf{u} = \lambda \mathbf{u}$. Then the eigenvector is constructed as follows,
    \begin{equation}
        \mathbf{u} = \big(\dots, z_1^{-2}\vect{v}, z_1^{-1}\vect{v}, z_1^0\vect{v}, z_1^1\vect{v}, z_1^2\vect{v}, \dots \big)
    \end{equation}
    where $z_1$ is the root of the smallest magnitude of $\operatorname{det}\big(f(z)-\lambda\Id\big) = 0$ and $\big[f(z_1)-\lambda\Id\big]\vect{v} = 0$. The interface eigenvalue problem becomes
    \begin{equation}
        \scalebox{0.85}{$\left[\begin{pNiceMatrix}[columns-width=0em]
        \Block[draw,fill=blue!31,rounded-corners]{3-3}{} & & & &  \\
        & \mathbf{T}_B& & &  \\
        & &  & q &  \\
         & & q& \Block[draw,fill=red!31,rounded-corners]{3-3}{} & &  \\
         & & & & \mathbf{T}_A &  \\
         & & & & &  
        \end{pNiceMatrix} - \lambda \Id\right]$}
        \scalebox{0.75}{$\begin{pNiceMatrix}[columns-width=0em]
        a{\color{blue!80}\begin{pmatrix}
            \vdots \\
            \vect{u}_2 \\
            \vect{u}_1
        \end{pmatrix}}\\
       \phantom{a} {\color{red!80}\begin{pmatrix}
            \vect{u}_1 \\
            \vect{u}_2\\
            \vdots
        \end{pmatrix}}
        \end{pNiceMatrix}$} =\scalebox{0.75}{$\begin{pmatrix}
            \vdots \\
            0 \\
            ({ \mathbf{T}_B})_{1,2}
            aq\vect{u}_2 + \big(({\mathbf{T}_B})_{1,1} -\lambda\big)a\vect{u}_1 + q\vect{u}_1\\
            aq\mathbf{u}_1 + \big(({ \mathbf{T}_A})_{1,1}-\lambda\big)\vect{u}_{1} + ({ \mathbf{T}_A})_{1,2}\mathbf{u}_2 \\
            0 \\
            \vdots
        \end{pmatrix} $}.  
    \end{equation}
    Due to reflection symmetry, we have $({ \mathbf{T}_A})_{1,1} = ({\mathbf{T}_B})_{1,1}$ as well as $({\mathbf{T}_A})_{1,2} = ({ \mathbf{T}_B})_{1,2}$. The residual vanished precisely when
    \begin{equation}
        \begin{pmatrix}
            \big(({\scriptstyle \mathbf{T}_A})_{1,1} -\lambda\big)a + q & ({\scriptstyle \mathbf{T}_A})_{1,2}
            aq \\
            aq + \big(({\scriptstyle \mathbf{T}_A})_{1,1}-\lambda\big) & ({\scriptstyle \mathbf{T}_A})_{1,2}
        \end{pmatrix}\begin{pmatrix}
            \mathbf{u}_1\\
            \mathbf{u}_2
        \end{pmatrix} = \begin{pmatrix}
            0 \\ 0
        \end{pmatrix}
    \end{equation}
    which is satisfied for $\alpha = \pm 1$, independently of the other entries. Moreover, it must hold that
    \begin{equation}\label{eq: matching condition via coupling}
        \frac{\mathbf{u}_2}{\mathbf{u}_1} = - \frac{aq + ({ \mathbf{T}_A})_{1,1} - \lambda}{({ \mathbf{T}_A})_{1,2}},\quad a = \pm 1.
    \end{equation}
    Since $\mathbf{u_1}$ and $\mathbf{u_2}$ are the entries in the first unit cell, they agree with the first two entries of the Bloch vector $\mathbf{v}$, which completes the proof of the result.
\end{proof}

We would like to emphasise that the matching condition in \eqref{eq: matching condition via coupling} depends on the parameter $q$. Consequently, a compact perturbation of the operator, which would change the couplings $q$ could potentially cause the interface mode to vanish. We will proceed to numerically illustrate the results of the interface modes in the case of a dimer system in Figure \ref{Fig: Interface Twofold Dimer Common coupling}.

\begin{figure}[htb]
    \centering
    \subfloat[][Computation performed for $a_1 = 1.2, a_2 = 1, b_1 =c_1 =1.8-0.8\i$ and $b_2 = c_2 = 3.2-1\i$.]%
    {\includegraphics[width=0.465\linewidth]{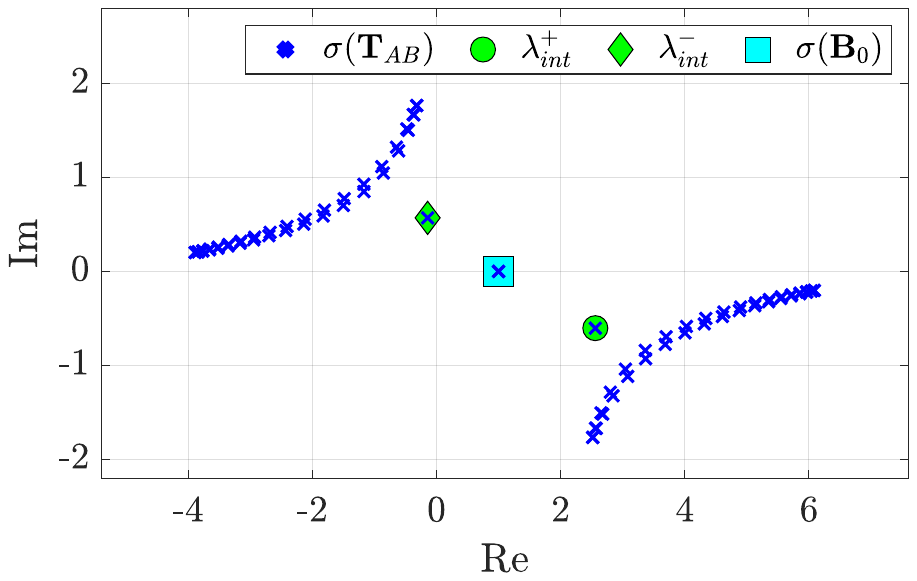}}\quad
    \subfloat[][Computation performed for $a_1 = 1.2, a_2 = 1, b_1 =c_1 =1.8-0.8\i$ and $b_2 = c_2 = 0.5+\i$.]%
    {\includegraphics[width=0.45\linewidth]{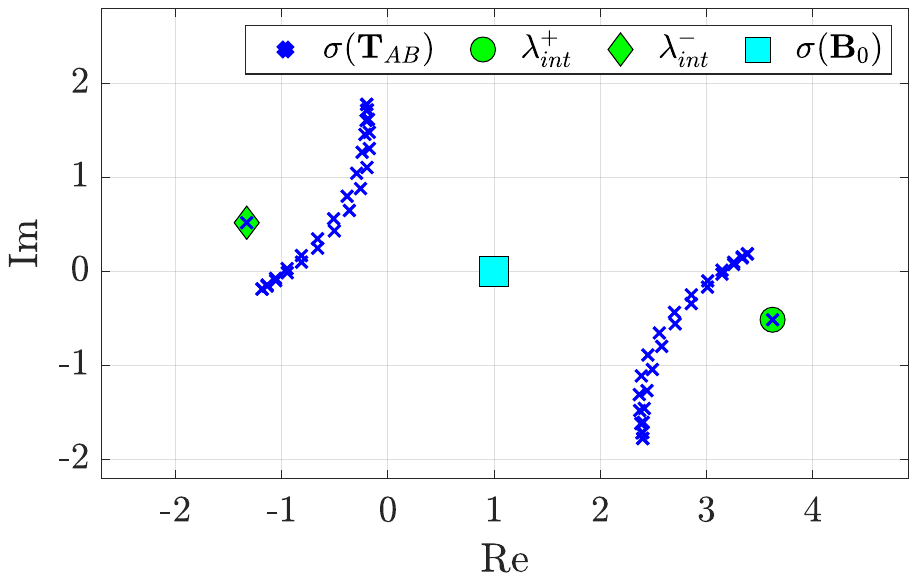}}
    \caption{Symmetric interface Toeplitz operator with complex valued entries. The interface operator supports both a monopole mode at $\lambda_{int}^-$ and a dipole mode at $\lambda_{int}^-$. The eigenvalue in $\sigma(\mathbf{B}_0)$ is not an interface mode because it is decaying towards the interface.}
    \label{Fig: Interface Twofold Dimer Common coupling}
\end{figure}

\printbibliography

\end{document}